\providecommand{\U}[1]{\protect\rule{.1in}{.1in}}
\newtheorem{theorem}{Theorem}
\newtheorem{corollary}[theorem]{Corollary}
\newtheorem{lemma}[theorem]{Lemma}
\newtheorem{proposition}[theorem]{Proposition}
\newtheorem{remark}[theorem]{Remark}
\renewenvironment{proof}[1][Proof]{\textbf{#1.} }{\ \rule{0.5em}{0.5em}}
\numberwithin{theorem}{section} \numberwithin{equation}{section}
\begin{document}
\title{ On transitive operator algebras in real Banach spaces}
\author{Edward Kissin, Victor S. Shulman and Yurii V. Turovskii}
\dedicatory{In memory of Victor Lomonosov}\maketitle

\begin{abstract}
We consider weakly closed transitive algebras of operators containing non-zero
compact operators in real Banach spaces (Lomonosov algebras). It is shown that
they are naturally divided in three classes: the algebras of real, complex and
quaternion classes. The properties and characterizations of algebras in each
class as well as some useful examples are presented. It is shown that in
separable real Hilbert spaces there is a continuum of pairwise non-similar
Lomonosov algebras of complex type and of quaternion type.

\end{abstract}

\section{ Introduction}

Almost fifty years ago Victor Lomonosov \cite{L} proved that any algebra of
operators on a complex Banach space $\mathcal{X}$ that contains a non-zero
compact operator either has a non-trivial closed invariant subspace or is
dense in the algebra $\mathcal{B}(\mathcal{X})$ of all operators with respect
to the strong operator topology (SOT) (or equivalently weak operator
topology). Thus the only (SOT)-closed transitive (= having only trivial
invariant subspaces) operator algebra containing a non-zero compact operator
is $\mathcal{B}(\mathcal{X})$ itself. We call (SOT)-closed transitive algebras
containing non-zero compact operators in a (real or complex) Banach space
\textit{Lomonosov algebras}. Then the Lomonosov result states that in any
complex Banach space there is only one Lomonosov algebra. In this paper we
establish that the structure of Lomonosov algebras in real spaces is much more
complicated, pithy and intriguing.

The considerable interest in the theory of invariant subspaces in real Banach
spaces that arose in recent times can be partially explained by its relations
to infinite-dimensional extremal problems and representation theory (see for
example the work of Atzmon \cite{Atzmon} and references therein). Another
reason is the fact that some interesting technical tools developed in this
theory allowed to solve several classical problems which are still open for
complex spaces. As a revealing example one can mention the theorem of
Simoni\v{c} \cite{Sim} on the existence of an invariant subspace for an
operator with compact imaginary part (see also the earlier result of Lomonosov
\cite{Lreal} and the work of Lomonosov and Shulman \cite{LS1}, where the
commutative families of such operators and their analogues for Banach spaces
were considered). The situation with the Lomonosov algebras is opposite: in
real spaces their study is more difficult than in complex spaces.

Even if $\mathcal{X}$ is finite-dimensional, the list of Lomonosov algebras is
exhausted by the algebra $\mathcal{B}(\mathcal{X})$ only in the case of odd
$\dim\mathcal{X}$. If $\dim\mathcal{X}=2n$ then the algebra $M_{n}%
(\mathbb{C})$ of all complex $n\times n$ matrices is isomorphic to the
subalgebra
\[
\mathcal{A}=\left\{
\begin{pmatrix}
T & -R\\
R & T
\end{pmatrix}
\text{: }T,R\in M_{n}(\mathbb{R})\right\}  \text{ of }B(\mathcal{X}%
)=M_{2n}(\mathbb{R}).
\]
It is transitive, so that $\mathcal{A}$ is a Lomonosov algebra. Similarly, if
$\dim\mathcal{X}=4n$ then the algebra $\ M_{n}(\mathbb{H})$ of all quaternion
$n\times n$ matrices is isomorphic to a transitive subalgebra of the algebra
$B(\mathcal{X})=M_{4n}(\mathbb{R})$. So this is another example of a Lomonosov
algebra. Moreover, every finite-dimensional Lomonosov algebra is isomorphic
either to $M_{n}(\mathbb{R}),$ or to $M_{n}(\mathbb{C}),$ or to $M_{n}%
(\mathbb{H})$, for some $n.$

It was natural to expect that in infinite dimensional spaces there exist a
rich variety of Lomonosov algebras, but no attempt has been made to describe
them or to give new meaningful examples up to now. Our aim is to fill in this
gap: to analyze the general properties and present non-trivial classes of
examples of Lomonosov algebras in infinite dimensional real Banach spaces.

In Section 2 we study not necessarily closed transitive algebras containing
non-zero finite rank operators (\textit{$L$-algebras}, for short) and obtain
for them a general density type theorem. It states that if $\mathcal{A}$ is an
$L$-algebra then, given a family $y_{1},...,y_{n}\in X$, $\varepsilon>0$ and
linearly independent set $W\subset\mathcal{X}$ of cardinality $4n,$ one can
find $x_{1},...,x_{n}\in W$ and $T\in\mathcal{A}$ with $\Vert Tx_{i}%
-y_{i}\Vert<\varepsilon$. We show that $L$-algebras are naturally divided in
three classes: the algebras of \textit{real}, \textit{complex} and
\textit{quaternion} \textit{types}, and that for each class the density
theorem can be given in a more informative form. A consequence of these
results is the fact that the only Lomonosov algebra of real type is the
algebra $\mathcal{B}(\mathcal{X})$. We deduce also some other
characterizations of algebras of complex and quaternion types. For example we
show that an algebra $\mathcal{A}$ belongs to the quaternion type if and only
if the rank of each finite-rank operator in $\mathcal{A}$ is a multiple of 4.
This allows us to show that each $L$-algebra is contained in a maximal
$L$-algebra of the same type.

In Section 3 we consider Lomonosov algebras of complex type. To each closed
operator $S$ satisfying condition $S^{-1}=-S,$ we relate a Lomonosov algebra
$\mathcal{A}_{S}$ of complex type. We study the properties of such algebras
and show that they form a dominating class: Each Lomonosov algebra of complex
type is contained in some algebra $\mathcal{A}_{S}$. We prove a locality
theorem for such algebras: Each algebra $\mathcal{A}_{S}$ is the closure of
its ideal of finite rank operators. Then, constructing a special system of
algebras $\mathcal{A}_{S}$ and using the technique of operator ranges, we
prove that in a separable infinite-dimensional real Hilbert space there is a
continuum of pairwise non-similar Lomonosov algebras of complex type.

In Section 4 similar results are obtained for the algebras of quaternion type.
However, as the quaternion case is more complicated than the complex case, our
approach differs. We introduce and study the notion of a closed representation
of a finite group on a Banach space. Then, applying the results to the
quaternion group $G_{\mathbb{H}},$ we construct Lomonosov algebras
$\mathcal{A}_{\pi}\subset\mathcal{B}(\mathcal{X})$ starting with any closed
and regular representations $\pi$ of $G_{\mathbb{H}}$ on $\mathcal{X}$. Using
this construction we show that in a real separable, infinite-dimensional
Hilbert space there is a continuum of pairwise non-similar Lomonosov algebras
of quaternion type.

It should be underlined that our picture of the variety and the structure of
Lomonosov algebras is still very far from completeness. It suffices to say
that we have no examples of non-unital Lomonosov algebras and we do not even
know if they exist (see the discussion of some open problems at the end of the
paper). We hope that this subject will be further developed in subsequent studies.

\section{ Density theorems}

In what follows, unless stated otherwise, by $\mathcal{X}$ we denote an
infinite-dimensional real Banach space. Here we consider transitive algebras
of operators on $\mathcal{X}$ that contain a non-zero finite rank operator;
for brevity we call them $L$-\textit{algebras}.

Any Lomonosov algebra is an $L$-algebra: the presence of non-zero finite-rank
operators (moreover, projections) in Lomonosov algebras was proved in
\cite[Lemma 5.5]{LRT}. Unfortunately the proof in \cite{LRT} does not seem to
be very transparent; anyway since we need somewhat less we may give a shorter
and simpler proof.

\begin{lemma}
\label{ComFin} Let $\mathcal{X}$ be a real Banach space and $T$ be a compact
operator on $\mathcal{X}$ that has a non-zero eigenvalue $\lambda$. Then the
norm-closed algebra $\mathcal{A}=\mathcal{A}(T)$ generated by $T$ contains a
non-zero finite rank operator.
\end{lemma}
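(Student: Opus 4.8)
The plan is to produce the Riesz spectral projection associated with $\lambda$ and then multiply it by $T$, so that the resulting operator lands inside the (possibly non-unital) algebra generated by $T$. First I would pass to the complexification $\mathcal{X}_{\mathbb{C}}=\mathcal{X}\oplus i\mathcal{X}$ and the compact operator $T_{\mathbb{C}}$, so that resolvents $(zI-T_{\mathbb{C}})^{-1}$ and contour integrals make sense. By the Riesz--Schauder theory the nonzero real number $\lambda$ is an isolated point of $\sigma(T_{\mathbb{C}})=\sigma(T)$, the generalized eigenspace $N=\ker(\lambda I-T)^{m}$ is finite-dimensional and $T$-invariant, and $T|_{N}=\lambda I+(\text{nilpotent})$ is invertible. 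Choosing a small circle $\Gamma$ centred at $\lambda$ and lying in the resolvent set, the Riesz projection
\[
P=\frac{1}{2\pi i}\oint_{\Gamma}(zI-T_{\mathbb{C}})^{-1}\,dz
\]
is finite rank with range $N$, hence nonzero. Since $\lambda$ is real and $\Gamma$ is symmetric about the real axis, $P$ commutes with the conjugation $J(x+iy)=x-iy$, so it is the complexification of a genuine real finite-rank operator on $\mathcal{X}$, which I again denote by $P$.

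The crux is to show that $P$ lies in the norm-closed algebra generated by $T$. For this I would first prove that $(zI-T_{\mathbb{C}})^{-1}$ belongs to the closed unital algebra $\mathcal{B}_{0}$ generated by $T_{\mathbb{C}}$ for every $z$ in the resolvent set $\rho(T)=\mathbb{C}\setminus\sigma(T)$. The set $G=\{z\in\rho(T):(zI-T_{\mathbb{C}})^{-1}\in\mathcal{B}_{0}\}$ contains all $z$ with $|z|>\Vert T\Vert$ (Neumann series), is closed in $\rho(T)$ because $\mathcal{B}_{0}$ is closed and the resolvent is norm-continuous, and is open because the local expansion $(zI-T_{\mathbb{C}})^{-1}=\sum_{k\ge0}(z_{0}-z)^{k}(z_{0}I-T_{\mathbb{C}})^{-(k+1)}$ keeps the resolvent inside $\mathcal{B}_{0}$ as soon as one of its powers is there. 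As the nonzero spectrum of a compact operator is a null sequence accumulating only at $0$, the complement $\rho(T)$ is connected, whence $G=\rho(T)$. Writing the contour integral as a norm limit of Riemann sums of resolvents $\sum_{j}c_{j}(z_{j}I-T_{\mathbb{C}})^{-1}$ then gives $P\in\mathcal{B}_{0}$. Finally, averaging the approximating complex polynomials $q_{n}(T_{\mathbb{C}})\to P$ against $J$ via $\tfrac{1}{2}(q_{n}(T_{\mathbb{C}})+Jq_{n}(T_{\mathbb{C}})J)$ replaces them by real polynomials $r_{n}(T_{\mathbb{C}})$ (using $JT_{\mathbb{C}}J=T_{\mathbb{C}}$ and the antilinearity of $J$), so in fact $P=\lim_{n}r_{n}(T)$ is a norm limit of real polynomials in $T$.

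It remains to descend to the non-unital algebra. Since $zr_{n}(z)$ has zero constant term, each $Tr_{n}(T)$ lies in the algebra $\mathcal{A}(T)$ generated by $T$, and $Tr_{n}(T)\to TP$; as $\mathcal{A}(T)$ is norm-closed, $F:=TP\in\mathcal{A}(T)$. The operator $F$ has range $T(N)=N$, hence is finite rank, and $F|_{N}=T|_{N}$ is invertible, so $F\neq0$. This $F$ is the desired nonzero finite-rank element.

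I expect the main obstacle to be the real-space bookkeeping rather than the spectral theory: one must make the resolvent and the contour integral rigorous on $\mathcal{X}$ through complexification, check that a symmetric contour yields a $J$-invariant (hence real) operator, and verify that the approximating polynomials may be taken with real coefficients. The one genuinely non-formal ingredient is the clopen/connectedness argument that places every resolvent inside the closed algebra generated by $T$; it replaces an appeal to Runge's theorem and relies precisely on the fact that the nonzero spectrum of a compact operator accumulates only at $0$, so that $\rho(T)$ is connected.
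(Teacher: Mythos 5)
Your proof is correct, but it takes a genuinely different route from the paper's. The paper treats the complex-space statement (that the Riesz projection at a nonzero isolated spectral point of a compact operator is a norm limit of polynomials with zero constant term in that operator) as known, complexifies $T$ to $T\oplus T$, and then uses a purely algebraic trick: a complex polynomial $p_n(T\oplus T)$ is a $2\times 2$ operator matrix whose entries are real polynomials in $T$ (namely $q_n(T)$ and $\pm r_n(T)$), so the corner blocks $W_1,W_2$ of the finite-rank limit $W$ are themselves finite-rank norm limits of real polynomials without constant term, and at least one of them is nonzero. This sidesteps entirely the two issues you work hardest on: the paper never needs the limit operator to be \emph{real} in any sense (any nonzero finite-rank limit has real finite-rank corners, so no symmetric contour or conjugation argument is required), and the constant-term issue is absorbed into the cited complex fact. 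You, by contrast, (a) reprove the complex fact from scratch --- your clopen/connectedness argument that every resolvent $(zI-T_{\mathbb{C}})^{-1}$ lies in the closed unital algebra generated by $T_{\mathbb{C}}$, followed by norm-convergent Riemann sums, is a complete and correct substitute for the usual appeal to Runge's theorem, and it correctly isolates where compactness enters (countable spectrum, hence connected resolvent set); (b) obtain a real operator via the symmetry $JPJ=P$ and averaging the approximating polynomials; and (c) descend from the unital to the non-unital algebra by the multiplication trick $F=TP=\lim_n Tr_n(T)$, verifying $F\neq0$ through the invertibility of $T$ on the generalized eigenspace. What your route buys is self-containedness and a clean unital-to-non-unital reduction that avoids controlling constant terms of the approximants; what the paper's route buys is brevity --- given the known complex fact, the corner-extraction finishes the proof in a few lines with no real-versus-complex bookkeeping at all.
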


\begin{proof}
It is well known that the statement holds for complex spaces: the Riesz
projection $P$ corresponding to $\{\lambda\}$ is a limit of a sequence of
polynomials of $T$. Let $\mathcal{Z}$ be a complexification of $\mathcal{X}$,
i.e. $\mathcal{Z}=\mathcal{X}\oplus\mathcal{X}$ and $\mathcal{X}$ is included
into $\mathcal{Z}$ by identification of $x\in\mathcal{X}$ with $x\oplus
0\in\mathcal{Z}$. We denote by $\mathcal{I}$ the operator on $\mathcal{Z}$
acting by the rule
\[
\mathcal{I}(x\oplus y)=(-y)\oplus x,\text{ or in matrix form }\mathcal{I}=%
\begin{pmatrix}
0 & -1\\
1 & 0
\end{pmatrix}
\]
One can consider $\mathcal{Z}$ as a complex space setting $(\alpha
+i\beta)z=(\alpha1+\beta\mathcal{I})z$. For each operator $K$ on $\mathcal{X}%
$, the operator $K\oplus K$ on $\mathcal{Z}$ is complex-linear. The operator
$T\oplus T$ is compact and $\lambda$ is its eigenvalue. Let $p_{n}%
(t)=\sum_{k=1}^{N_{n}}(\alpha_{nk}+i\beta_{nk})t^{k}$ be a sequence of
polynomials with complex coefficients such that $p_{n}(t)$ tends to a non-zero
finite rank operator $W$. Then
\[
p_{n}(T\oplus T)=\sum_{k=1}^{N_{n}}(\alpha_{nk}+i\beta_{nk})%
\begin{pmatrix}
T^{k} & 0\\
0 & T^{k}%
\end{pmatrix}
=\sum_{k=1}^{N_{n}}%
\begin{pmatrix}
\alpha_{nk}T^{k} & -\beta_{nk}T^{k}\\
\beta_{nk}T^{k} & \alpha_{nk}T^{k}%
\end{pmatrix}
=%
\begin{pmatrix}
q_{n}(T) & r_{n}(T)\\
-r_{n}(T) & q_{n}(T)
\end{pmatrix}
,
\]
where $q_{n}$ and $r_{n}$ are polynomials with real coefficients. It follows
that
\[
W=%
\begin{pmatrix}
W_{1} & -W_{2}\\
W_{2} & W_{1}%
\end{pmatrix}
\]
and $q_{n}(T)\rightarrow W_{1}$, $r_{n}(T)\rightarrow W_{2}$. Since at least
one of operators $W_{1},W_{2}$ is non-zero, $\mathcal{A}(T)$ contains a
non-zero finite rank operator.
\end{proof}

\medskip

The famous Lomonosov Lemma \cite{L} establishes the presence of compact
operators with non-zero spectra in transitive algebras containing compact
operators; it is easy to see that its proof in \cite{L} works for real spaces
as well as for complex ones. Applying Lemma \ref{ComFin} we obtain the needed result:

\begin{corollary}
\label{LomAlg} Every transitive norm-closed algebra of operators on a real
Banach space containing a non-zero compact operator contains a non-zero finite
rank operator.
\end{corollary}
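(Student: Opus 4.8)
The plan is to combine the two ingredients the authors have just isolated: the real-space version of the Lomonosov Lemma and Lemma \ref{ComFin}. First I would invoke the Lomonosov Lemma, whose proof (as noted) is valid over $\mathbb{R}$, to pass from the given non-zero compact operator $K\in\mathcal{A}$ to a compact operator $T\in\mathcal{A}$ with non-zero spectrum. The Lomonosov construction produces $A\in\mathcal{A}$ such that $AK$ has a non-zero fixed vector; setting $T=AK$ then gives a compact operator in $\mathcal{A}$ with $1\in\sigma(T)$, which is precisely the assertion the paper attributes to the lemma.

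Next I would feed this $T$ into Lemma \ref{ComFin}. Since $T$ is compact, Riesz--Schauder theory applied on the complexification shows that any non-zero point of $\sigma(T)$ is an eigenvalue of the complexified operator, so the hypothesis of Lemma \ref{ComFin} is met: its proof already passes to the complexification $\mathcal{Z}=\mathcal{X}\oplus\mathcal{X}$ and uses only that $T\oplus T$ has a non-zero eigenvalue $\lambda$, which may be complex. The lemma then yields a non-zero finite rank operator $F$ in the norm-closed algebra $\mathcal{A}(T)$ generated by $T$.

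Finally I would observe that $\mathcal{A}(T)\subseteq\mathcal{A}$. Because $\mathcal{A}$ is a norm-closed algebra containing $T$, it contains every polynomial in $T$ with zero constant term, and hence the norm-closure of the set of such polynomials. The approximating polynomials $p_{n}$ in the proof of Lemma \ref{ComFin} are written $\sum_{k=1}^{N_{n}}(\cdots)t^{k}$, i.e.\ without constant term, which is exactly what keeps the approximants $q_{n}(T),r_{n}(T)$ inside the (possibly non-unital) algebra $\mathcal{A}$. Therefore $F\in\mathcal{A}$, so $\mathcal{A}$ contains a non-zero finite rank operator, as required.

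The step I expect to require the most care is the passage from ``non-zero spectrum'' to a usable eigenvalue: over a real space the spectrum must be read through the complexification, and one must check that the finite-rank operator produced by the Riesz calculus for a possibly complex $\lambda$ is genuinely a limit of real polynomials in $T$ lying in $\mathcal{A}$. Lemma \ref{ComFin} has already encapsulated precisely this bookkeeping, so the corollary reduces to citing the Lomonosov Lemma and Lemma \ref{ComFin} in sequence.
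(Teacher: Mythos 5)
Your proof is correct and takes essentially the same route as the paper: the real-space version of the Lomonosov Lemma yields a compact operator in $\mathcal{A}$ with non-zero eigenvalue (indeed eigenvalue $1$), and Lemma \ref{ComFin} applied to that operator produces a non-zero finite-rank operator in the norm-closed algebra $\mathcal{A}(T)\subseteq\mathcal{A}$. The paper's proof is exactly this two-step citation (given in the sentence preceding the corollary); your extra care about the zero-constant-term polynomials keeping everything inside the possibly non-unital $\mathcal{A}$, and about reading eigenvalues through the complexification, only makes explicit what the paper leaves implicit.
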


So Lomonosov algebras can be equivalently defined as (SOT)-closed $L$-algebras.

Recall that an algebra $\mathcal{A}$ of operators on a linear space
$\mathcal{X}$ over a field or, more generally, over a division ring is called
\textit{strictly transitive}, if for any $x,y\in X$ with $x\neq0$, there is
$T\in\mathcal{A}$ with $Tx=y$ . Furthermore $\mathcal{A}$ is called
\textit{strictly dense} if for each finite linearly independent family
$(x_{1},...,x_{n})\subset X$ and any family $(y_{1},...,y_{n})\subset
\mathcal{X}$, there is $T\in\mathcal{A}$ with $Tx_{i} = y_{i}$, $i = 1,...,n$.

Let $k\in N$. Let us say that $\mathcal{A}$ is \textit{strictly ($1/k$%
)-dense}, if for every linearly independent family $(x_{1},...,x_{kn}%
)\subset\mathcal{X}$ and any family $(y_{1},...,y_{n})\subset\mathcal{X}$,
there are $j_{1},...,j_{n} \in\{1,...,kn\}$ and $T\in\mathcal{A}$ with
$\{y_{i}: 1\le i\le n\}\subset\{Tx_{j}: j = 1,...,kn\}$. In other words $y_{i}
= Tx_{j_{i}}$, $1\le i\le n$. \bigskip

In the operator theory setting an algebra $\mathcal{A}$ of bounded linear
operators on a Banach space $\mathcal{X}$ is called

\textit{transitive} if for any $\varepsilon>0$ and any $x,y\in\mathcal{X}$
with $x\neq0$, there is $T\in\mathcal{A}$ with $\|Tx-y\| < \varepsilon$ (it is
easy to check that this definition is equivalent to one given above, namely
the absence of non-trivial closed invariant subspaces),

\textit{dense} if for any $\varepsilon>0$, and for each finite linearly
independent family $(x_{1},...,x_{n})\subset\mathcal{X}$ and any family
$(y_{1},...,y_{n})\subset\mathcal{X}$, there is $T\in\mathcal{A}$ with
$\|Tx_{i} - y_{i}\| < \varepsilon$, $i = 1,...,n$.

($1/k$)\textit{-dense}, if for any $\varepsilon>0$, every linearly independent
family $(x_{1},...,x_{kn})\subset\mathcal{X}$ and any family $(y_{1}%
,...,y_{n})\subset\mathcal{X}$, there are $j_{1},...,j_{n}\in\{1,...,kn\}$ and
$T\in\mathcal{A}$ with $\Vert y_{i}-Tx_{j_{i}}\Vert<\varepsilon$, $1\leq i\leq
n$.

Note that if $\mathcal{A}$ is ($1/k$)-dense then it is ($1/(k+1)$)-dense.
\bigskip

In this section our aim is to establish the following general version of
density theorem for $L$-algebras.

\begin{theorem}
\label{hT} Every $L$-algebra of operators in a real Banach space is
\emph{(1/4)}-dense.
\end{theorem}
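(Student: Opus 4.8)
The plan is to reduce this approximate, topological statement to the algebraic structure of $\mathcal{A}$ encoded in its commutant, which will turn out to be one of the three real division algebras $\mathbb{R}$, $\mathbb{C}$, $\mathbb{H}$; the number $4$ in the statement is then forced by the inequality $\dim_{\mathbb{R}}\mathbb{H}=4$. Throughout, $\mathcal{A}$ is transitive (topologically) and contains a non-zero finite rank operator, but need not be closed or unital.

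First I would analyse the commutant $D=\mathcal{A}'=\{C\in\mathcal{B}(\mathcal{X}):CT=TC\text{ for all }T\in\mathcal{A}\}$. For a non-zero $C\in D$, both $\ker C$ and $\overline{\operatorname{ran}C}$ are closed $\mathcal{A}$-invariant subspaces, so transitivity forces $\ker C=0$ and $\overline{\operatorname{ran}C}=\mathcal{X}$; thus $C$ is injective with dense range. Using a non-zero finite rank operator in $\mathcal{A}$, and where convenient the eigenprojection produced by Lemma \ref{ComFin} inside $\mathcal{A}$, I would upgrade this to invertibility of $C$, so that $D$ is a real Banach division algebra. By the real Gelfand--Mazur theorem $D$ is isomorphic to $\mathbb{R}$, $\mathbb{C}$ or $\mathbb{H}$, whence $k:=\dim_{\mathbb{R}}D\le 4$. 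Since $D\subseteq\mathcal{B}(\mathcal{X})$, the whole space $\mathcal{X}$ is a (left) $D$-vector space, every operator in $\mathcal{A}$ is $D$-linear, and the $D$-span of any $m$ vectors has real dimension at most $km\le 4m$.

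The heart of the matter is the orbit--density claim: if $w_1,\dots,w_m\in\mathcal{X}$ are $D$-linearly independent, then $\{(Tw_1,\dots,Tw_m):T\in\mathcal{A}\}$ is dense in $\mathcal{X}^m$. I would prove this by induction on $m$, the base case $m=1$ being topological transitivity. Let $V$ be the closed $\mathcal{A}^{(m)}$-invariant orbit closure. If $V\ne\mathcal{X}^m$, then, using the inductive hypothesis to see that the projection of $V$ to the first $m-1$ coordinates is dense and that the slice $\{z:(0,\dots,0,z)\in V\}$ is a trivial $\mathcal{A}$-invariant subspace, $V$ is the graph of a family of closed densely defined operators $C_1,\dots,C_{m-1}$ whose $\mathcal{A}$-invariance forces them to commute with $\mathcal{A}$. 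The finite rank operator in $\mathcal{A}$, together with transitivity, is exactly what lets me show these operators are everywhere defined and bounded, hence lie in $D$. Feeding the orbit points into the graph gives $T\bigl(w_m-\sum_j C_jw_j\bigr)=0$ for all $T\in\mathcal{A}$, and since the common kernel of $\mathcal{A}$ is a trivial invariant subspace we conclude $w_m=\sum_j C_jw_j$, a non-trivial $D$-dependence contradicting independence. This graph/commutant step -- producing the commuting operators and, above all, upgrading them from closed to bounded via the finite rank operator -- is where I expect the real work to lie.

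Finally I would assemble the theorem. Given $\varepsilon>0$, a linearly independent family $x_1,\dots,x_{4n}$ and targets $y_1,\dots,y_n$, choose a maximal $D$-linearly independent subset $S\subseteq\{x_1,\dots,x_{4n}\}$. By maximality every $x_j$ lies in the $D$-span of $S$, whose real dimension is at most $k|S|\le 4|S|$; since $x_1,\dots,x_{4n}$ span a $4n$-dimensional real subspace, this yields $4n\le 4|S|$, so $|S|\ge n$. Picking indices $j_1,\dots,j_n$ of $n$ elements of $S$, the vectors $x_{j_1},\dots,x_{j_n}$ are $D$-linearly independent, and the orbit--density claim supplies $T\in\mathcal{A}$ with $\Vert Tx_{j_i}-y_i\Vert<\varepsilon$ for $1\le i\le n$. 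This is precisely $(1/4)$-density of $\mathcal{A}$. The same argument shows $\mathcal{A}$ is in fact $(1/k)$-dense, which will be the refined, type-dependent form alluded to later.
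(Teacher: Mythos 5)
Your overall strategy --- reduce to a division-algebra commutant and then do a dimension count --- matches the paper's in spirit, but your choice of commutant breaks the argument at its central step. You define $D=\mathcal{A}'$ inside $\mathcal{B}(\mathcal{X})$, i.e.\ you only allow \emph{bounded, everywhere defined} operators, and accordingly your orbit--density argument must upgrade the closed, densely defined graph transformations $C_1,\dots,C_{m-1}$ to bounded operators lying in $D$. That upgrade is false, and Section 3 of the paper supplies explicit counterexamples: take a closed but unbounded partial complex structure $S$ on a real Hilbert space (e.g.\ $S_{\mathcal{M},\mathcal{N}}$ built from a generic pair of subspaces) and let $\mathcal{A}=\mathcal{A}_S=\{T\in\mathcal{B}(\mathcal{X}):TS\subset ST\}$. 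By Theorem \ref{comType} this is an $L$-algebra (indeed a Lomonosov algebra of complex type), yet its bounded commutant is just $\mathbb{R}\mathbf{1}$: any bounded $C$ commuting with $\mathcal{A}_S$ preserves $\mathcal{X}^F=\mathcal{D}(S)$ and restricts there to an element $a\mathbf{1}+bS$ of the paper's $\mathbb{D}$, and boundedness of $C$ forces $b=0$ because a closed operator that is bounded on its proper dense domain cannot exist. So for this algebra your $k$ equals $1$, your dimension count then promises that $\mathcal{A}_S$ is dense (contradicting Theorem \ref{join}(i)), and your orbit--density claim fails outright: for $w_1\in\mathcal{D}(S)$ and $w_2=Sw_1$, which are $\mathbb{R}$-linearly (hence $D$-linearly) independent, every $T\in\mathcal{A}_S$ satisfies $Tw_2=STw_1$, so the orbit $\{(Tw_1,Tw_2):T\in\mathcal{A}_S\}$ stays inside the graph of $S$, a proper closed subspace of $\mathcal{X}\oplus\mathcal{X}$. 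The closed densely defined operator your graph argument produces here is $S$ itself, and no use of the finite rank operator or of transitivity can make it bounded.

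The missing idea is that the commutant must be taken purely algebraically on the right subspace. The paper sets $\mathcal{X}^F=\mathcal{A}^F\mathcal{X}$, the span of the ranges of the finite rank operators in $\mathcal{A}$, proves that $\mathcal{A}^F$ acts \emph{strictly} transitively there (Lemma \ref{str}), and defines $\mathbb{D}$ as \emph{all} linear operators on $\mathcal{X}^F$ commuting with $\mathcal{A}^F$, with no boundedness requirement; such operators are merely closable on $\mathcal{X}$ (Theorem \ref{closable}) and in the complex- and quaternion-type examples are genuinely unbounded. Then invertibility in $\mathbb{D}$ is automatic from strict transitivity, finite-dimensionality of $\mathbb{D}$ follows by restricting to a finite-dimensional range (Lemma \ref{finDim}), and Jacobson density plus Frobenius give the trichotomy $\mathbb{D}\cong\mathbb{R},\mathbb{C},\mathbb{H}$. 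The $(1/4)$-density is then obtained by exactly your dimension-counting assembly, but carried out over $\mathbb{D}$ inside $\mathcal{X}^F$ (Theorem \ref{hTstrong}), together with a separate lemma (Lemma \ref{indep}) that pushes an arbitrary linearly independent family in $\mathcal{X}$ into $\mathcal{X}^F$ by an operator of $\mathcal{A}^F$ while preserving independence (Corollary \ref{cons}). So your final paragraph is sound once the commutant is enlarged to include these unbounded operators; with the bounded commutant, the statement you would actually be proving --- $(1/k)$-density for $k=\dim_{\mathbb{R}}D$ --- is false.
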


This theorem will be deduced from more strong and detailed results. Before
proving them we introduce some notation and establish several auxiliary results.

Let $\mathcal{F}(\mathcal{X})$ be the ideal of all finite-rank operators on
$\mathcal{X}$. For an $L$-algebra $\mathcal{A}$ on $\mathcal{X}$, we set
\[
\mathcal{A}^{F}=\mathcal{A}\cap\mathcal{F}(\mathcal{X})\text{ and }%
\mathcal{X}^{F}=\mathcal{A}^{F}\mathcal{X}:=\mathrm{lin}\{Tx:T\in
\mathcal{A}^{F},x\in\mathcal{X}\}.
\]
Since $\mathcal{A}^{F}$ is an ideal of $\mathcal{A}$, the linear subspace
$\mathcal{X}^{F}$ is invariant for $\mathcal{A}$ and therefore dense in
$\mathcal{X}$.

\begin{lemma}
\label{str} The restriction of $\mathcal{A}^{F}$ to $\mathcal{X}^{F}$ is a
strictly transitive algebra of operators on $\mathcal{X}^{F}$.
\end{lemma}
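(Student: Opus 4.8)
The plan is to recast the assertion in module-theoretic language: to say that $\mathcal{A}^{F}$ acts strictly transitively on $\mathcal{X}^{F}$ is exactly to say that $\mathcal{X}^{F}$ is a simple left module over $\mathcal{A}^{F}$, i.e. $\mathcal{A}^{F}x=\mathcal{X}^{F}$ for every nonzero $x\in\mathcal{X}^{F}$. Since the inclusion $\mathcal{A}^{F}x\subseteq\mathcal{X}^{F}$ is automatic, everything reduces to proving the reverse inclusion $\mathcal{X}^{F}\subseteq\mathcal{A}^{F}x$ for each nonzero $x\in\mathcal{X}^{F}$.

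First I would check that the orbit $\mathcal{A}^{F}x$ is nonzero. The subspace $K=\bigcap_{T\in\mathcal{A}^{F}}\ker T$ is closed (each $T$ is continuous) and $\mathcal{A}$-invariant, because $\mathcal{A}^{F}$ is a two-sided ideal of $\mathcal{A}$: for $x\in K$ and $a\in\mathcal{A}$ one has $T(ax)=(Ta)x=0$ since $Ta\in\mathcal{A}^{F}$. As $\mathcal{X}^{F}\neq0$, we have $K\neq\mathcal{X}$, so transitivity forces $K=0$; hence $\mathcal{A}^{F}x\neq0$ whenever $x\neq0$. The same ideal property shows that the linear manifold $\mathcal{A}^{F}x$ is itself $\mathcal{A}$-invariant, since $a(Tx)=(aT)x$ with $aT\in\mathcal{A}^{F}$; thus its closure is a closed $\mathcal{A}$-invariant subspace, and being nonzero it must be all of $\mathcal{X}$ by transitivity. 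Therefore $\mathcal{A}^{F}x$ is \emph{dense} in $\mathcal{X}$.

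The decisive step is to upgrade this topological density to the exact inclusion $\mathcal{X}^{F}\subseteq\mathcal{A}^{F}x$, and here the finite rank of the operators in $\mathcal{A}^{F}$ does the work. Fix any $G\in\mathcal{A}^{F}$ and set $R=G\mathcal{X}$, a finite-dimensional subspace. The set $\{GTx:T\in\mathcal{A}^{F}\}=G(\mathcal{A}^{F}x)$ is a \emph{linear} subspace of $R$ (by linearity in $T$), and since $G$ is continuous and $\mathcal{A}^{F}x$ is dense, $G(\mathcal{A}^{F}x)$ is dense in $G\mathcal{X}=R$. A dense linear subspace of a finite-dimensional normed space is the whole space, so $G(\mathcal{A}^{F}x)=R$; moreover each $GT$ lies in $\mathcal{A}^{F}$ (left ideal property), whence $R=G\mathcal{X}\subseteq\mathcal{A}^{F}x$. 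Letting $G$ range over $\mathcal{A}^{F}$ and using that $\mathcal{X}^{F}$ is the linear span of the ranges $G\mathcal{X}$ while $\mathcal{A}^{F}x$ is a linear subspace, I obtain $\mathcal{X}^{F}\subseteq\mathcal{A}^{F}x$, which completes the argument.

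The only genuine obstacle is precisely this passage from approximate to exact transitivity, and I expect it to be overcome exactly as above: by composing with a finite-rank operator so as to project the dense orbit into a finite-dimensional range, where density collapses to equality. Everything else is a routine use of the two-sided ideal property of $\mathcal{A}^{F}$ together with the transitivity of $\mathcal{A}$.
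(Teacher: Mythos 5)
Your proposal is correct and follows essentially the same route as the paper: show the orbit $\mathcal{A}^{F}x$ is a dense $\mathcal{A}$-invariant subspace, then compose with a finite-rank operator $G\in\mathcal{A}^{F}$ so that density in the finite-dimensional range $G\mathcal{X}$ collapses to equality, giving $G\mathcal{X}\subseteq\mathcal{A}^{F}x$ and hence $\mathcal{X}^{F}\subseteq\mathcal{A}^{F}x$. The only difference is cosmetic: you verify explicitly that the orbit is nonzero (via $\bigcap_{T\in\mathcal{A}^{F}}\ker T=0$), a point the paper leaves implicit.
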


\begin{proof}
Let $0\neq x_{0}\in\mathcal{X}^{F}$ and $\mathcal{Y}=\mathcal{A}^{F}x_{0}$.
Then $\mathcal{Y}$ is $\mathcal{A}$-invariant and therefore dense in
$\mathcal{X}$. It follows that for each $T\in\mathcal{A}^{F}$, $T\mathcal{Y}$
is dense in $T\mathcal{X}$. Since $\dim T\mathcal{X}<\infty$, $T\mathcal{Y}%
=T\mathcal{X}$. Therefore $T\mathcal{X}\subset\mathcal{Y}$. Since
$\mathcal{X}^{F}=\mathrm{lin}(\cup_{T\in\mathcal{A}^{F}}T\mathcal{X})$ we see
that $\mathcal{X}^{F}\subset\mathcal{Y}$. The converse inclusion follows from
the definition of $\mathcal{X}^{F}$. We proved that $\mathcal{Y}%
=\mathcal{X}^{F}$, so $\mathcal{A}^{F}$ is strictly transitive.\bigskip
\end{proof}

Let $\mathbb{D}$ be the algebra of all linear operators on $\mathcal{X}^{F}$
commuting with $\mathcal{A}^{F}$. Since $\mathcal{A}^{F}$ is strictly
transitive each non-zero operator in $\mathbb{D}$ is invertible (its kernel
and range are invariant for $\mathcal{A}^{F}$). Thus $\mathbb{D}$ is a
division algebra. By Jacobson's Density Theorem (see \cite[Section 5]{Bres}
for several convenient formulations), we obtain the following result.

\begin{corollary}
\label{Jac} $\mathcal{A}^{F}$ is strictly dense on $\mathcal{X}^{F}$, which is
considered as a linear space over $\mathbb{D}$.
\end{corollary}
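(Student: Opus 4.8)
The plan is to recognize Corollary~\ref{Jac} as a direct instance of Jacobson's Density Theorem, so that the work consists almost entirely of matching the operator-theoretic objects to the algebraic hypotheses of that theorem rather than of any new computation. First I would regard $\mathcal{X}^{F}$ as a left module over the algebra $\mathcal{A}^{F}$ via the natural action $T\cdot x=Tx$. The crucial structural input is Lemma~\ref{str}: strict transitivity of $\mathcal{A}^{F}$ on $\mathcal{X}^{F}$ says precisely that for every non-zero $x_{0}\in\mathcal{X}^{F}$ one has $\mathcal{A}^{F}x_{0}=\mathcal{X}^{F}$, which is exactly the statement that $\mathcal{X}^{F}$ is a simple (irreducible) $\mathcal{A}^{F}$-module with non-degenerate action.

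Next I would identify the relevant division ring. By definition $\mathbb{D}$ consists of all linear maps on $\mathcal{X}^{F}$ commuting with $\mathcal{A}^{F}$; since a map $D$ commutes with every $T\in\mathcal{A}^{F}$ exactly when $D(Tx)=T(Dx)$ for all $T,x$, we have $\mathbb{D}=\mathrm{End}_{\mathcal{A}^{F}}(\mathcal{X}^{F})$, the endomorphism ring of the module. The discussion preceding the corollary has already shown, using irreducibility, that every non-zero element of $\mathbb{D}$ is invertible, so $\mathbb{D}$ is a division ring (this is the Schur's lemma step). Consequently $\mathcal{X}^{F}$ becomes a vector space over $\mathbb{D}$ and $\mathcal{A}^{F}$ acts by $\mathbb{D}$-linear transformations.

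With these identifications in place, I would invoke Jacobson's Density Theorem in the form: for a simple module $M$ over a ring $R$ with endomorphism division ring $D=\mathrm{End}_{R}(M)$, the image of $R$ in $\mathrm{End}_{D}(M)$ acts densely, meaning that for any $D$-linearly independent $x_{1},\dots,x_{n}\in M$ and arbitrary $y_{1},\dots,y_{n}\in M$ there is $r\in R$ with $rx_{i}=y_{i}$ for all $i$. Translating back through the identifications $R=\mathcal{A}^{F}$, $M=\mathcal{X}^{F}$, $D=\mathbb{D}$, this is exactly the assertion that $\mathcal{A}^{F}$ is strictly dense on $\mathcal{X}^{F}$ viewed as a linear space over $\mathbb{D}$, which is the desired conclusion.

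The only point requiring care, and the one I expect to be the main (if modest) obstacle, is the bookkeeping at the interface between the analytic and algebraic frameworks: confirming that the paper's notion of strict transitivity coincides with module simplicity, including the non-degeneracy $\mathcal{A}^{F}\mathcal{X}^{F}=\mathcal{X}^{F}$ that must be checked because $\mathcal{A}^{F}$ need not be unital, and verifying that the $\mathbb{D}$ of the paper is literally the endomorphism ring appearing in Jacobson's theorem. Once these identifications are secured, no further argument is needed and the corollary follows at once.
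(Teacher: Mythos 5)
Your proposal is correct and follows exactly the paper's route: the paper likewise uses Lemma~\ref{str} to view $\mathcal{X}^{F}$ as a strictly transitive (i.e.\ simple) $\mathcal{A}^{F}$-module, observes via Schur's lemma that the commutant $\mathbb{D}$ is a division algebra, and then cites Jacobson's Density Theorem to conclude strict density over $\mathbb{D}$. Your extra care about non-unitality and the identification $\mathbb{D}=\mathrm{End}_{\mathcal{A}^{F}}(\mathcal{X}^{F})$ is sound bookkeeping but does not change the argument.
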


\begin{lemma}
\label{finDim} The $\mathbb{R}$-algebra $\mathbb{D}$ is finite-dimensional.
\end{lemma}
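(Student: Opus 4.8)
The plan is to realize $\mathbb{D}$ faithfully as a subalgebra of the endomorphism ring of a single finite-dimensional subspace, so that its $\mathbb{R}$-dimension is automatically bounded. The point of departure is that $\mathcal{A}^{F}$ contains a non-zero finite-rank operator $T$; set $\mathcal{R}=T\mathcal{X}^{F}$, a non-zero subspace of $\mathcal{X}^{F}$ of finite $\mathbb{R}$-dimension.

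First I would check that $\mathcal{R}$ is invariant under every $D\in\mathbb{D}$. Since $D$ is an operator on $\mathcal{X}^{F}$ it satisfies $D\mathcal{X}^{F}\subseteq\mathcal{X}^{F}$, and as $D$ commutes with $T\in\mathcal{A}^{F}$ we get $D\mathcal{R}=DT\mathcal{X}^{F}=TD\mathcal{X}^{F}\subseteq T\mathcal{X}^{F}=\mathcal{R}$. Because $\mathcal{R}$ is carried into itself by each $D$, restriction defines an $\mathbb{R}$-algebra homomorphism $\rho\colon\mathbb{D}\to\operatorname{End}_{\mathbb{R}}(\mathcal{R})$, where the multiplicativity $(D_{1}D_{2})|_{\mathcal{R}}=(D_{1}|_{\mathcal{R}})(D_{2}|_{\mathcal{R}})$ is immediate from the invariance.

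The crucial step is the injectivity of $\rho$, and here I would invoke the division-algebra property of $\mathbb{D}$ established above. If $\rho(D)=0$ for some $D\neq0$, then $\mathcal{R}\subseteq\ker D$; but a non-zero element of a division algebra is invertible, so $\ker D=\{0\}$, forcing $\mathcal{R}=\{0\}$, contrary to $T\neq0$. Thus $\rho$ is injective, and therefore
\[
\dim_{\mathbb{R}}\mathbb{D}\le\dim_{\mathbb{R}}\operatorname{End}_{\mathbb{R}}(\mathcal{R})=(\dim_{\mathbb{R}}\mathcal{R})^{2}<\infty.
\]

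I expect no serious obstacle here: the only points requiring care are that the chosen $T$ yields a non-zero range (guaranteed by $T\neq0$) and that one may legitimately pass from $\mathcal{R}\subseteq\ker D$ to $D=0$, which follows at once from the division-algebra structure. It is worth noting that once $\mathbb{D}$ is known to be finite-dimensional, Frobenius' theorem confines it to one of $\mathbb{R}$, $\mathbb{C}$ or $\mathbb{H}$, which is presumably the source of the trichotomy into real, complex and quaternion types announced in the introduction.
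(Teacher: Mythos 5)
Your proof is correct and follows essentially the same route as the paper: both restrict $\mathbb{D}$ to the finite-dimensional range $T\mathcal{X}^{F}$ of a non-zero $T\in\mathcal{A}^{F}$ and deduce injectivity of the restriction map from the fact that a non-zero element of the division algebra $\mathbb{D}$ has trivial kernel. Your explicit verification of the $\mathbb{D}$-invariance of the range is a detail the paper leaves implicit, but the argument is identical in substance.
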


\begin{proof}
The range $\mathcal{M}=T\mathcal{X}^{F}$ of a non-zero operator $T\in
\mathcal{A}^{F}$ is invariant for $\mathbb{D}$. The map $S\mapsto
S|_{\mathcal{M}}$ from $\mathbb{D}$ to the algebra $\mathcal{L}(\mathcal{M})$
of all linear operators on $\mathcal{M}$ is injective. Indeed if
$S|_{\mathcal{M}}=0$, then $\ker S\neq0$, but $\ker S$ must be trivial being
an invariant subspace for $\mathcal{A}^{F}$. So $\dim(\mathbb{D})\leq
\dim(\mathcal{L}(\mathcal{M}))<\infty$. \bigskip
\end{proof}

Applying the famous Frobenius's Theorem on finite-dimensional division
algebras over $\mathbb{R}$, we conclude that only 3 cases are possible:

a) $\dim(\mathbb{D}) = 1$, $\mathbb{D}$ is isomorphic to $\mathbb{R}$; in this
case we say that $\mathcal{A}$ is an algebra \textit{of real type},

b) $\dim(\mathbb{D}) = 2$, $\mathbb{D}$ is isomorphic to $\mathbb{C}$;
$\mathcal{A}$ is an algebra \textit{of complex type},

c) $\dim(\mathbb{D})=4$, $\mathbb{D}$ is isomorphic to the algebra of
quaternions, $\mathbb{H}$; $\mathcal{A}$ is an algebra \textit{of quaternion
type}.

\begin{lemma}
\label{more} Each operator $T\in\mathcal{A}$ is $\mathbb{D}$-linear on
$\mathcal{X}^{F}$, i.e., $T$ commutes with each $\Lambda\in\mathbb{D}$ on
$\mathcal{X}^{F}$.
\end{lemma}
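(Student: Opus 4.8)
The plan is to exploit the strict transitivity of $\mathcal{A}^{F}$ on $\mathcal{X}^{F}$ together with the fact that $\mathcal{A}^{F}$ is an ideal of $\mathcal{A}$. First I would note that $T$ restricts to a well-defined linear map on $\mathcal{X}^{F}$, since $\mathcal{X}^{F}$ is $\mathcal{A}$-invariant; thus $T|_{\mathcal{X}^{F}}$ and any $\Lambda\in\mathbb{D}$ are genuine operators on the same space $\mathcal{X}^{F}$, and the identity $T\Lambda=\Lambda T$ is meaningful there. The difficulty to be overcome is that $\Lambda$ is only known to commute with the \emph{finite-rank} members of $\mathcal{A}$, namely the elements of $\mathcal{A}^{F}$, whereas $T$ is an arbitrary element of $\mathcal{A}$ and need not be finite rank; the commutation property must therefore be transferred from $\mathcal{A}^{F}$ to all of $\mathcal{A}$.

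The key device will be a factorization through a finite-rank operator. Fix once and for all a nonzero vector $\eta\in\mathcal{X}^{F}$. Given an arbitrary $\xi\in\mathcal{X}^{F}$ (the case $\xi=0$ being trivial), the strict transitivity of $\mathcal{A}^{F}$ on $\mathcal{X}^{F}$ established in Lemma~\ref{str} produces an operator $S\in\mathcal{A}^{F}$ with $S\eta=\xi$. Since $\mathcal{A}^{F}$ is an ideal of $\mathcal{A}$, the product $TS$ again lies in $\mathcal{A}^{F}$ (it belongs to $\mathcal{A}$ and is finite rank because $S$ is). Consequently $\Lambda$ commutes on $\mathcal{X}^{F}$ with \emph{both} $S$ and $TS$, and the vectors $\Lambda\eta$, $S\eta$ remain in $\mathcal{X}^{F}$, so that every expression that follows is legitimate.

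The computation is then immediate: $T\Lambda\xi=T\Lambda S\eta=TS\Lambda\eta=\Lambda TS\eta=\Lambda T\xi$, where the second equality uses $\Lambda S=S\Lambda$ and the third uses $(TS)\Lambda=\Lambda(TS)$, both holding because $S,TS\in\mathcal{A}^{F}$. As $\xi\in\mathcal{X}^{F}$ was arbitrary, this gives $T\Lambda=\Lambda T$ on $\mathcal{X}^{F}$, i.e. $T$ is $\mathbb{D}$-linear.

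I do not anticipate a serious obstacle here; the entire argument rests on the single observation that any vector of $\mathcal{X}^{F}$ can be written as $S\eta$ with $S\in\mathcal{A}^{F}$, after which $T$ is absorbed into $S$ via the ideal property, so that the a priori weaker commutation of $\Lambda$ with $\mathcal{A}^{F}$ already suffices. The only point demanding a little care is the bookkeeping of domains: $\Lambda$ is defined on $\mathcal{X}^{F}$ alone, so one must verify at each step that the vectors produced stay inside $\mathcal{X}^{F}$, which follows from the $\mathcal{A}$-invariance of $\mathcal{X}^{F}$ and from $\Lambda\mathcal{X}^{F}\subseteq\mathcal{X}^{F}$.
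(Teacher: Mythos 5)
Your proof is correct and is essentially the paper's own argument: the paper applies strict transitivity to produce $K\in\mathcal{A}^{F}$ with $Kx=x$ and then computes $T\Lambda x=T\Lambda Kx=TK\Lambda x=\Lambda(TK)x=\Lambda Tx$, relying on exactly the two ingredients you use (strict transitivity of $\mathcal{A}^{F}$ on $\mathcal{X}^{F}$, and absorption of $T$ into $\mathcal{A}^{F}$ via the ideal property). Your only deviation --- factoring $\xi=S\eta$ through a fixed nonzero vector $\eta$ instead of taking a local identity $Kx=x$ --- is cosmetic and changes nothing in the structure of the computation.
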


\begin{proof}
Let $x\in\mathcal{X}^{F}$. Since $\mathcal{A}^{F}$ is strictly transitive on
$\mathcal{X}^{F}$, there is $K\in\mathcal{A}^{F}$ with $Kx=x$. So for each
$\Lambda\in\mathbb{D}$, $T\Lambda x = T\Lambda Kx = TK\Lambda x = \Lambda(TK)
x = \Lambda Tx$. Thus $T$ commutes with $\mathbb{D}$ on $\mathcal{X}^{F}$.
\end{proof}

\begin{lemma}
\label{indep} For every linearly independent set $\{u_{1},...,u_{m}%
\}\subset\mathcal{X}$, there is $K\in\mathcal{A}^{F}$ such that the set
$\{Ku_{1},...,Ku_{m}\}$ is linearly independent.
\end{lemma}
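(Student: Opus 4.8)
The plan is to prove this by induction on $m$, using the strict transitivity of $\mathcal{A}^{F}$ on $\mathcal{X}^{F}$ (Lemma \ref{str}) together with the fact that $\mathcal{X}^{F}$ is dense in $\mathcal{X}$ and invariant for $\mathcal{A}$. The base case $m=1$ is immediate: since $\{u_{1}\}$ is linearly independent, $u_{1}\neq 0$, and transitivity of $\mathcal{A}$ lets us find $K\in\mathcal{A}^{F}$ with $Ku_{1}\neq 0$ (indeed, if every $K\in\mathcal{A}^{F}$ killed $u_{1}$, then $u_{1}$ would lie in the common kernel, contradicting that $\mathcal{A}^{F}\mathcal{X}=\mathcal{X}^{F}$ is dense, or more directly contradicting transitivity).

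For the inductive step, suppose we have found $K\in\mathcal{A}^{F}$ so that $\{Ku_{1},\dots,Ku_{m-1}\}$ is linearly independent, and we want to handle $u_{m}$. The difficulty is that this same $K$ may send $u_{m}$ into the span of $Ku_{1},\dots,Ku_{m-1}$. The idea is to perturb $K$ by adding a suitable operator from $\mathcal{A}^{F}$ to separate $u_{m}$ from the others. Concretely, I would pick a vector $v_{m}\in\mathcal{X}^{F}$ that, together with $Ku_{1},\dots,Ku_{m-1}$, forms a linearly independent set (possible since $\mathcal{X}^{F}$ is infinite-dimensional — it is dense in the infinite-dimensional space $\mathcal{X}$). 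Then I would try to produce $K'\in\mathcal{A}^{F}$ with $K'u_{i}=Ku_{i}$ for $i<m$ while $K'u_{m}$ is forced out of $\mathrm{lin}\{Ku_{1},\dots,Ku_{m-1}\}$.

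The natural tool here is Corollary \ref{Jac}: $\mathcal{A}^{F}$ is strictly dense on $\mathcal{X}^{F}$ as a $\mathbb{D}$-vector space. So if the images $Ku_{1},\dots,Ku_{m-1}$ together with $u_{m}$ were $\mathbb{D}$-linearly independent vectors of $\mathcal{X}^{F}$, strict density would hand us an operator in $\mathcal{A}^{F}$ taking prescribed values on them — in particular one fixing the first $m-1$ and sending $u_{m}$ to any chosen vector outside their span. The cleanest route is therefore to reduce directly to strict density. Consider whether the vectors $u_{1},\dots,u_{m}\in\mathcal{X}$, after applying a single suitable $K_{0}\in\mathcal{A}^{F}$ (which exists and may be taken injective-enough on this finite set by an approximation argument), become $\mathbb{D}$-linearly independent in $\mathcal{X}^{F}$; if so, strict density finishes the job immediately, and if a $\mathbb{D}$-dependence persists one argues that it forces an $\mathbb{R}$-dependence among the $u_{i}$, contradicting the hypothesis.

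I expect the main obstacle to be the interplay between $\mathbb{R}$-linear independence (the hypothesis) and $\mathbb{D}$-linear independence (what strict density needs), since $\mathbb{D}$ may be $\mathbb{C}$ or $\mathbb{H}$, so an $\mathbb{R}$-independent family can become $\mathbb{D}$-dependent after applying operators of $\mathcal{A}^{F}$. The careful point is to show that one can always choose $K\in\mathcal{A}^{F}$ whose action on $\{u_{1},\dots,u_{m}\}$ destroys any such spurious $\mathbb{D}$-dependence; here the infinite-dimensionality of $\mathcal{X}^{F}$ and the freedom in strict density are essential, and this is where the argument must be made precise rather than waved through.
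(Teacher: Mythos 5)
There is a genuine gap, and it lies exactly where you yourself flag it: the claimed dichotomy ``either the images can be made $\mathbb{D}$-linearly independent, or a $\mathbb{D}$-dependence persists and forces an $\mathbb{R}$-dependence among the $u_{i}$'' is false. Take $\mathcal{A}$ of complex type, $0\neq u_{1}\in\mathcal{X}^{F}$ and $u_{2}=Su_{1}$ with $S\in\mathbb{D}$, $S^{2}=-\mathbf{1}$. Then $u_{1},u_{2}$ are $\mathbb{R}$-linearly independent (as $S$ has no real eigenvalues), but by Lemma \ref{more} every $K\in\mathcal{A}^{F}$ satisfies $Ku_{2}=KSu_{1}=SKu_{1}$, so $\{Ku_{1},Ku_{2}\}$ is $\mathbb{D}$-dependent for \emph{every} $K$; no choice of $K$ can ``destroy'' this dependence, yet no $\mathbb{R}$-dependence among $u_{1},u_{2}$ exists, and the lemma's conclusion still holds (any $K$ with $Ku_{1}\neq0$ works, since $Ku_{1}$ and $SKu_{1}$ are $\mathbb{R}$-independent). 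So reduction to strict $\mathbb{D}$-density applied to the $u_{i}$'s images is the wrong target: the lemma is genuinely about $\mathbb{R}$-independence, which is weaker than $\mathbb{D}$-independence, and that weakness cannot be argued away. A second, mechanical, problem: Corollary \ref{Jac} prescribes values only at points of $\mathcal{X}^{F}$, while the $u_{i}$ lie in $\mathcal{X}$; and your perturbation step cannot be realized by post-composition, because if $Ku_{m}\in\mathrm{lin}\{Ku_{1},\dots,Ku_{m-1}\}$ then $RKu_{m}\in\mathrm{lin}\{RKu_{1},\dots,RKu_{m-1}\}$ for every $R$, so no operator of the form $RK$ can ``force $u_{m}$ out of the span.''

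For comparison, the paper's proof avoids both issues by arguing by contradiction with an \emph{additive} perturbation. Assuming the images of $u_{1},\dots,u_{m+1}$ are always dependent, one writes $Ku_{m+1}=\sum_{i}t_{i}(K)Ku_{i}$ and studies the coefficient functions $t_{i}$ on the set $\mathcal{E}$ of operators $T\in\mathcal{A}^{F}$ satisfying the transversality conditions $\dim T\mathcal{L}=m$ and $T\mathcal{L}\cap K_{0}\mathcal{L}=0$ (here strict $\mathbb{D}$-density enters only to show $\mathcal{E}\neq\emptyset$). Comparing the expansions for $T$ and $T+\alpha K_{0}$ and using the transversality to split the resulting identity, one finds that $t_{i}(T)=\lambda_{i}$ is \emph{constant} on $\mathcal{E}$; hence the fixed vector $u_{m+1}-\sum_{i}\lambda_{i}u_{i}$ lies in $\bigcap_{T\in\mathcal{E}}\ker T$, and since $\mathcal{E}$ is open in $\mathcal{A}^{F}$ this intersection equals $\ker\mathcal{A}^{F}=\{0\}$, contradicting independence. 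This locally-constant-coefficients argument is the idea your proposal is missing, and it is not recoverable from the strict-density reduction you sketch.
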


\begin{proof}
For $m=1$ this is evident. Using induction, assume that for $m$-element sets
the statement is true. Aiming at a contradiction, suppose that for a system
\{$u_{1},...,u_{m+1}\}$ the statement fails. Then for each $K\in
\mathcal{A}^{F}$ satisfying the condition
\begin{equation}
\text{ the set }\{Ku_{1},Ku_{2},...,Ku_{m}\}\text{ is linearly independent},
\label{indep1}%
\end{equation}
there are numbers $t_{i}(K)\in\mathbb{R}$, $1\leq i\leq m$, such that
\begin{equation}
Ku_{m+1}=\sum_{i=1}^{m}t_{i}(K)Ku_{i}. \label{indeP}%
\end{equation}
Fix $K_{0}\in\mathcal{A}^{F}$ satisfying (\ref{indep1}) and set $\lambda
_{i}=t_{i}(K_{0})$, $1\leq i\leq m$. Let us denote $\mathrm{lin}%
(u_{1},...,u_{n})$ by $\mathcal{L}$ and let $\mathcal{E}$ be the set of all
operators $T\in\mathcal{A}^{F}$ satisfying the conditions
\begin{equation}
\dim T\mathcal{L}=m\text{ and }T\mathcal{L}\cap K_{0}\mathcal{L}=0.
\label{cap}%
\end{equation}
To see that $\mathcal{E}$ is non-void, let $\mathcal{M}=K_{0}\mathcal{L}$ and
$\widetilde{\mathcal{M}}$ be the $\mathbb{D}$-linear span of $\mathcal{M}$.
This is a finite-dimensional $\mathbb{D}$-linear subspace of $\mathcal{X}^{F}$
and (since $\dim_{\mathbb{D}}(\mathcal{X}^{F})=\infty$) there is a
$\mathbb{D}$-linear subspace $\mathcal{N}\subset\mathcal{X}^{F}$ with
$\widetilde{\mathcal{M}}\cap\mathcal{N}=0$ and $\dim_{\mathbb{D}}%
(\mathcal{N})=\dim_{\mathbb{D}}(\widetilde{\mathcal{M}})$. By Corollary
\ref{Jac}, $\mathcal{A}^{F}$ as a $\mathbb{D}$-algebra is strictly dense in
$\mathcal{X}^{F}$. Hence there is $R\in\mathcal{A}^{F}$ with
$R\widetilde{\mathcal{M}}=\mathcal{N}$. Clearly the operator $RK_{0}$ belongs
to $\mathcal{E}$.

We claim that $t_{i}(T) = \lambda_{i}$, $1\le i\le m$, for all $T
\in\mathcal{E}$.

The standard argument shows that $\mathcal{E}$ is open in $\mathcal{A}^{F}$:
if $T\in\mathcal{E}$ then $T+ K\in\mathcal{E}$ for all $K\in A$ with
sufficiently small norm. So fix $T\in\mathcal{E}$ and, for some sufficiently
small $\alpha$, let $\mu_{i} = t_{i}(T+\alpha K^{F})$. Then
\begin{align*}
&  \sum_{i=1}^{m}\lambda_{i}\alpha K_{0}u_{i}+\sum_{i=1}^{m}t_{i}(T)Tu_{i}%
{=}\alpha K_{0}u_{m+1}+Tu_{m+1}\\
&  =(\alpha K_{0}+T)u_{m+1}{=}\sum_{i=1}^{m}\mu_{i}(\alpha K_{0}+T)u_{i}%
=\sum_{i=1}^{m}\mu_{i}\alpha K_{0}u_{i}+\sum_{i=1}^{m}\mu_{i}Tu_{i}.
\end{align*}
whence
\[
\sum_{i=1}^{m}(\lambda_{i}-\mu_{i})\alpha K_{0}u_{i} = \sum_{i=1}^{m} (\mu_{i}
- t_{i}(T))Tu_{i}.
\]
It follows from (\ref{cap}) that both parts of the above equality are equal to
zero, whence
\[
\lambda_{i} = \mu_{i} = t_{i}(T) \text{ for } i =1,...m.
\]

So we proved that $Tu_{i+1} = \sum_{i=1}^{m}\lambda_{i}Tu_{i}$, for all
$T\in\mathcal{E}$. In other words
\[
u_{i+1} - \sum_{i=1}^{m} \lambda_{i}u_{i} \in\bigcap_{T\in\mathcal{E}}\ker T.
\]
Since $\mathcal{E}$ is open in $\mathcal{A}^{F}$, $\bigcap_{T\in\mathcal{E}%
}\ker T = \ker\mathcal{A}^{F} = \{0\}$. Therefore $u_{m+1} = \sum_{i=1}^{m}
\lambda_{i}u_{i}$, a contradiction.
\end{proof}

\begin{corollary}
\label{cons}If $\mathcal{A}^{F}$ is strictly \emph{(}$1/k$\emph{)}-dense on
$\mathcal{X}^{F}$ then $\mathcal{A}$ is \emph{(}$1/k$\emph{)}-dense.
\end{corollary}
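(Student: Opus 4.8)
The plan is to transfer the algebraic strict $(1/k)$-density of $\mathcal{A}^{F}$ on $\mathcal{X}^{F}$ into the topological $(1/k)$-density of $\mathcal{A}$ on $\mathcal{X}$ by an approximate-target, solve-exactly argument, exploiting that $\mathcal{X}^{F}$ is dense in $\mathcal{X}$ and that every operator of $\mathcal{A}^{F}$ maps $\mathcal{X}$ into $\mathcal{X}^{F}$. First I would fix $\varepsilon>0$, a linearly independent family $x_{1},\dots,x_{kn}\in\mathcal{X}$ and arbitrary targets $y_{1},\dots,y_{n}\in\mathcal{X}$. Since $\mathcal{X}^{F}$ is dense in $\mathcal{X}$, I would choose $y_{1}^{\prime},\dots,y_{n}^{\prime}\in\mathcal{X}^{F}$ with $\Vert y_{i}-y_{i}^{\prime}\Vert<\varepsilon$; it then suffices to reproduce the $y_{i}^{\prime}$ exactly from a subfamily of the $x_{j}$ by a single operator of $\mathcal{A}$.

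The second step relocates the inputs inside $\mathcal{X}^{F}$ without destroying their independence. By Lemma \ref{indep} there is $K\in\mathcal{A}^{F}$ such that $Kx_{1},\dots,Kx_{kn}$ is again linearly independent, and all these vectors lie in $\mathcal{X}^{F}=\mathcal{A}^{F}\mathcal{X}$. Now the hypothesis applies directly: strict $(1/k)$-density of $\mathcal{A}^{F}$ on $\mathcal{X}^{F}$, applied to the independent family $Kx_{1},\dots,Kx_{kn}$ and to the targets $y_{1}^{\prime},\dots,y_{n}^{\prime}\in\mathcal{X}^{F}$, furnishes indices $j_{1},\dots,j_{n}\in\{1,\dots,kn\}$ and an operator $S\in\mathcal{A}^{F}$ with $S(Kx_{j_{i}})=y_{i}^{\prime}$ for $1\le i\le n$.

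Finally I would set $T=SK$. Since $S,K\in\mathcal{A}^{F}\subset\mathcal{A}$ and $\mathcal{A}$ is an algebra, $T\in\mathcal{A}$, while $Tx_{j_{i}}=S(Kx_{j_{i}})=y_{i}^{\prime}$ yields $\Vert y_{i}-Tx_{j_{i}}\Vert=\Vert y_{i}-y_{i}^{\prime}\Vert<\varepsilon$ for all $1\le i\le n$. This is exactly the defining property of $(1/k)$-density for $\mathcal{A}$, so the corollary follows.

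The one genuinely delicate point, which I expect to be the crux, is guaranteeing in the second step that the relocated inputs $Kx_{1},\dots,Kx_{kn}$ still form an admissible (linearly independent) family to which the strict density hypothesis may legitimately be applied: a finite-rank $K$ could a priori collapse independence, and it is precisely Lemma \ref{indep} that rules this out. With that family secured, the index selection performed on $\mathcal{X}^{F}$ pulls back verbatim to the original inputs through $K$, and no further estimates are required beyond the single approximation $\Vert y_{i}-y_{i}^{\prime}\Vert<\varepsilon$.
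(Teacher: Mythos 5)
Your proof is correct and follows essentially the same route as the paper's: approximate the targets $y_{i}$ by vectors in the dense subspace $\mathcal{X}^{F}$, use Lemma \ref{indep} to obtain $K\in\mathcal{A}^{F}$ preserving linear independence (so the images land in $\mathcal{X}^{F}$), apply strict $(1/k)$-density there, and compose. The only cosmetic difference is notation ($S$, $y_{i}^{\prime}$ versus the paper's $P$, $z_{i}$), so nothing further is needed.
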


\begin{proof}
Let $\varepsilon>0$, a linearly independent set $\{x_{1},...,x_{kn}%
\}\subset\mathcal{X}$ and a set $\{y_{1},...,y_{n}\}\subset\mathcal{X}$ be
given . Since $\mathcal{X}^{F}$ is dense in $\mathcal{X}$, there is a set
$\{z_{1},...,z_{n}\}\subset\mathcal{X}^{F}$ with $\|z_{i}-y_{i}\|<\varepsilon
$, $1\le i \le n$.

Using Lemma \ref{indep} we find $K\in\mathcal{A}^{F}$ such that the set
$\{Kx_{1},...,Kx_{kn}\}$ is linearly independent. Since $Kx_{i}\in
\mathcal{X}^{F}$, it follows from the assumption of the corollary, that there
are $P\in\mathcal{A}^{F}$ and numbers $j_{1},..,j_{n}$ with $PKx_{j_{i}}%
=z_{i}$, so it remains to set $T=PK$.\bigskip
\end{proof}

Now we can prove our "detailed"\ density theorem.

\begin{theorem}
\label{hTstrong}An L-algebra of real type is dense\emph{,} an L-algebra of
complex type is \emph{(1/2)}-dense\emph{,} an $L$-algebra of quaternion type
is \emph{(1/4)}-dense.
\end{theorem}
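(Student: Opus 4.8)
The plan is to treat all three cases through the common mechanism that $\mathcal{A}^{F}$ is strictly dense on $\mathcal{X}^{F}$ regarded as a space over the division algebra $\mathbb{D}$ (Corollary \ref{Jac}), converting these $\mathbb{D}$-linear statements into the required real density statements. Write $k=\dim_{\mathbb{R}}\mathbb{D}$, so that $k=1,2,4$ in the real, complex and quaternion cases respectively (Lemma \ref{finDim} together with Frobenius). For the complex and quaternion cases I will invoke Corollary \ref{cons}, which reduces the claim to showing that $\mathcal{A}^{F}$ is strictly $(1/k)$-dense on $\mathcal{X}^{F}$; the real case I will instead handle directly, since there strict $\mathbb{D}$-density already coincides with ordinary strict density, and I will need the stronger conclusion of full density rather than just $(1/1)$-density.

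The heart of the argument is the following selection fact: any $\mathbb{R}$-linearly independent family $x_{1},\dots,x_{kn}\subset\mathcal{X}^{F}$ contains $n$ members that are $\mathbb{D}$-linearly independent. To prove it, note that $\mathbb{R}$ embeds in $\mathbb{D}$ (real multiples of the identity lie in the commutant), so the real span of the $x_{i}$ is contained in their $\mathbb{D}$-span $V$. Hence $\dim_{\mathbb{R}}V\ge kn$, and since $\dim_{\mathbb{R}}V=k\,\dim_{\mathbb{D}}V$ for a space over the division algebra $\mathbb{D}$, we obtain $\dim_{\mathbb{D}}V\ge n$. As the $x_{i}$ span $V$ over $\mathbb{D}$, the standard extraction of a basis from a spanning set (valid over any division ring) yields a $\mathbb{D}$-independent subfamily of size $\dim_{\mathbb{D}}V\ge n$; retaining $n$ of its members gives indices $j_{1},\dots,j_{n}$ with $x_{j_{1}},\dots,x_{j_{n}}$ being $\mathbb{D}$-linearly independent.

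Granting this fact, the complex and quaternion cases follow quickly. Given an $\mathbb{R}$-independent family $x_{1},\dots,x_{kn}\subset\mathcal{X}^{F}$ and arbitrary $y_{1},\dots,y_{n}\subset\mathcal{X}^{F}$, I select the $\mathbb{D}$-independent subfamily $x_{j_{1}},\dots,x_{j_{n}}$ as above and apply the strict $\mathbb{D}$-density of $\mathcal{A}^{F}$ (Corollary \ref{Jac}) to produce $T\in\mathcal{A}^{F}$ with $Tx_{j_{i}}=y_{i}$ for every $i$. This is precisely strict $(1/k)$-density of $\mathcal{A}^{F}$ on $\mathcal{X}^{F}$, and Corollary \ref{cons} upgrades it to $(1/k)$-density of $\mathcal{A}$. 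For the real case ($k=1$) I argue directly: given $\varepsilon>0$, an $\mathbb{R}$-independent family $x_{1},\dots,x_{n}\subset\mathcal{X}$ and targets $y_{1},\dots,y_{n}\subset\mathcal{X}$, I first approximate each $y_{i}$ within $\varepsilon$ by some $z_{i}\in\mathcal{X}^{F}$ (possible since $\mathcal{X}^{F}$ is dense), then use Lemma \ref{indep} to find $K\in\mathcal{A}^{F}$ with $Kx_{1},\dots,Kx_{n}$ linearly independent; as $\mathbb{D}=\mathbb{R}$ this is $\mathbb{D}$-independence, so strict $\mathbb{D}$-density gives $P\in\mathcal{A}^{F}$ with $P(Kx_{i})=z_{i}$, and $T=PK\in\mathcal{A}$ satisfies $\|Tx_{i}-y_{i}\|<\varepsilon$.

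I expect the selection fact to be the only substantive step, and the main care it demands is bookkeeping over a possibly noncommutative division algebra: for $\mathbb{D}=\mathbb{H}$ one must fix the side on which $\mathbb{D}$ acts on $\mathcal{X}^{F}$ and use that dimension over a division ring is well defined, so that both the identity $\dim_{\mathbb{R}}V=k\,\dim_{\mathbb{D}}V$ and the basis-extraction step remain valid in the noncommutative setting. Everything else is a direct appeal to the earlier lemmas, so I anticipate no further obstacle.
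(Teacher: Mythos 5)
Your proof is correct and takes essentially the same route as the paper: compare $\dim_{\mathbb{R}}$ and $\dim_{\mathbb{D}}$ of the $\mathbb{D}$-span to extract a $\mathbb{D}$-independent subfamily of size $n$ from the $\mathbb{R}$-independent family of size $kn$, then apply Corollary \ref{Jac} together with Corollary \ref{cons}. Your two deviations are refinements rather than a different method: you spell out the quaternion case (where the paper just says the argument is similar, and your attention to one-sided module structure over the noncommutative $\mathbb{H}$ is warranted), and you handle the real case by repeating the argument of Corollary \ref{cons} with exact index-matching, which is in fact more careful than the paper's citation of that corollary, since as stated it yields only $(1/1)$-density rather than full density.
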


\begin{proof}
By Corollary \ref{cons}, it suffices to show that, for a real type algebra
$\mathcal{A}$, the algebra $\mathcal{A}^{F}$ is strictly dense on
$\mathcal{X}^{F}$; for a complex type algebra $\mathcal{A}$, the algebra
$\mathcal{A}^{F}$ is strictly (1/2)-dense on $\mathcal{X}^{F}$, and for a
quaternion type algebra $\mathcal{A}$, the algebra $\mathcal{A}^{F}$ is
strictly (1/4)-dense on $\mathcal{X}^{F}$.

The needed statement for real type algebras follows from the Jacobson Density
Theorem because in this case $\mathbb{D} = \mathbb{R}$.

Suppose that $\mathbb{D}$ is isomorphic to $\mathbb{C}$, so we may consider
$\mathcal{X}^{F}$ as a ${\mathbb{C}}$-module. Let $x_{1},...,x_{2n}$ be an
$\mathbb{R}$-linearly independent family in $\mathcal{X}^{F}$, and let
$\mathcal{M}=\mathrm{lin}_{\mathbb{C}}(x_{1},...,x_{2n})$ the $\mathbb{C}%
$-linear hull of $x_{1},...,x_{2n}$. If $\dim_{\mathbb{C}}\mathcal{M}<n$ then
$\dim_{\mathbb{R}}\mathcal{M}<2n$, a contradiction with our conditions. Thus
$\dim_{\mathbb{C}}\mathcal{M}\geq n$. Hence $\mathcal{M}$ contains a
${\mathbb{C}}$-linearly independent subfamily $x_{i_{1}},...,x_{i_{n}}$.
Therefore, by Corollary \ref{Jac}, for each $y_{1},...,y_{n}\in\mathcal{X}%
^{F}$, there is $T\in\mathcal{A}^{F}$ with $Tx_{i_{k}}=y_{k}$, $k=1,...,n$.

For algebras of quaternion type the argument is similar.\bigskip
\end{proof}

Since any dense and any (1/2)-dense algebra is (1/4)-dense, Theorem \ref{hT}
immediately follows from Theorem \ref{hTstrong}. \medskip

Below we will prove a kind of converse statement. Let us begin with a lemma
which will be repeatedly used below.

\begin{theorem}
\label{closable}For each $L$-algebra $\mathcal{A}\subset\mathcal{B}%
(\mathcal{X})$\emph{,} all operators in $\mathbb{D}$ are closable on
$\mathcal{X}$.
\end{theorem}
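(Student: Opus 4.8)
The plan is to show that each $\Lambda\in\mathbb{D}$ has a closed graph on $\mathcal{X}$, i.e. if $x_n\to 0$ in $\mathcal{X}$ and $\Lambda x_n\to w$ in $\mathcal{X}$ (with $x_n,\,\Lambda x_n\in\mathcal{X}^F$), then $w=0$. The essential tool must be that operators in $\mathcal{A}$ are bounded on all of $\mathcal{X}$, that they are $\mathbb{D}$-linear on $\mathcal{X}^F$ (Lemma \ref{more}), and that $\mathcal{A}^F$ acts strictly transitively on $\mathcal{X}^F$ (Lemma \ref{str}).

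**First I would** fix $\Lambda\in\mathbb{D}$ and a sequence $x_n\in\mathcal{X}^F$ with $x_n\to 0$ and $\Lambda x_n\to w$, aiming to prove $w=0$. The idea is to transport the convergence through a single fixed finite-rank operator and use boundedness. Pick any $K\in\mathcal{A}^F$. By Lemma \ref{more}, $K$ commutes with $\Lambda$ on $\mathcal{X}^F$, so $K(\Lambda x_n)=\Lambda(K x_n)$. Now $K$ is bounded on $\mathcal{X}$, so from $x_n\to 0$ we get $Kx_n\to 0$, and from $\Lambda x_n\to w$ we get $K(\Lambda x_n)\to Kw$; combining these with $K\Lambda x_n=\Lambda K x_n$ gives $\Lambda(Kx_n)\to Kw$. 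The point of this maneuver is that $Kx_n$ lands in the finite-dimensional range $K\mathcal{X}$, on which $\Lambda$ restricts to a genuine linear endomorphism (the range is $\mathbb{D}$-invariant), hence is automatically continuous there. Since $Kx_n\to 0$ inside that finite-dimensional invariant subspace, continuity forces $\Lambda(Kx_n)\to 0$, and therefore $Kw=0$.

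**The next step** is to conclude $w=0$ from the fact that $Kw=0$ for every $K\in\mathcal{A}^F$. Here one must first justify $w\in\mathcal{X}^F$: indeed $\Lambda x_n\to w$, and the $\Lambda x_n$ lie in $\mathcal{X}^F$, but $\mathcal{X}^F$ need not be closed, so a direct appeal is not available. I would instead note that the argument above actually shows $w\in\ker K$ is witnessed through the commutation only after verifying $w$ lies where $\Lambda$ is defined; the clean route is to observe that $\bigcap_{K\in\mathcal{A}^F}\ker K=\{0\}$ on $\mathcal{X}$, since this kernel is a closed $\mathcal{A}$-invariant subspace and transitivity rules out nonzero proper invariant subspaces. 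Thus $Kw=0$ for all $K\in\mathcal{A}^F$ already forces $w=0$, completing the proof.

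**The main obstacle** I expect is the point at which one concludes $\Lambda(Kx_n)\to 0$: one must argue carefully that $\Lambda$ restricted to the finite-dimensional subspace $K\mathcal{X}$ is continuous and that the convergence $Kx_n\to 0$ genuinely takes place within that fixed finite-dimensional space, so that finite-dimensional continuity applies. A subtlety is that different terms $Kx_n$ all lie in the single space $K\mathcal{X}$ because $K$ is a fixed operator; this is what makes the finite-dimensionality usable and is the crux that converts the abstract algebraic $\mathbb{D}$-linearity into an honest topological closability statement. One should also confirm at the start that the convergence hypotheses are set up correctly for the notion of closability intended (graph closedness of each $\Lambda$ viewed as an unbounded operator with domain $\mathcal{X}^F$).
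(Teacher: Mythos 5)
Your proposal is correct and follows essentially the same argument as the paper's proof: fix $K\in\mathcal{A}^F$, use the commutation $K\Lambda=\Lambda K$ on $\mathcal{X}^F$ together with boundedness of $\Lambda$ on a finite-dimensional $\Lambda$-invariant range to get $Kw=0$, then conclude $w=0$ because $\bigcap_{K\in\mathcal{A}^F}\ker K$ is a closed $\mathcal{A}$-invariant subspace. The only cosmetic difference is that you work with $K\mathcal{X}$ where the paper uses $K\mathcal{X}^F$; this is harmless since the relevant vectors $Kx_n$ already lie in $K\mathcal{X}^F$, which is the subspace whose $\Lambda$-invariance is immediate from the commutation relation.
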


\begin{proof}
Let $S\in\mathbb{D}$, $x_{n}\in\mathcal{X}^{F}$, $x_{n}\rightarrow0$ and
$Sx_{n}\rightarrow y\in\mathcal{X}$. For every $T\in\mathcal{A}^{F}$,
$STx_{n}=TSx_{n}\rightarrow Ty$. On the other hand the subspace $T\mathcal{X}%
^{F}$ is invariant for $S$ and finite-dimensional so $S$ is bounded on
$T\mathcal{X}^{F}$. Since $Tx_{n}\rightarrow0$, we get that $STx_{n}%
\rightarrow0$. Therefore $Ty=0$, for each $T\in\mathcal{A}^{F}$. Since
$\cap_{T\in\mathcal{A}^{F}}\ker T=0$ (it is a closed $\mathcal{A}$-invariant
subspace), $y=0$. Thus $S$ is closable.
\end{proof}

\begin{theorem}
\label{join}\emph{(i) }An $L$-algebra $\mathcal{A}$ is dense if and only if it
is of real type.\smallskip

\emph{(ii) }An $L$-algebra $\mathcal{A}$ is \emph{(1/2)}-dense but not dense
if and only if it is of complex type.\smallskip

\emph{(iii) }An $L$-algebra $\mathcal{A}$ is \emph{(1/4)}-dense but neither
dense\emph{,} nor \emph{(1/2)}-dense if and only if it is of quaternion type.
\end{theorem}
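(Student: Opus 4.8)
The plan is to reduce all three equivalences to two \emph{strictness} facts and then prove those facts by a graph/closability argument. First recall the ingredients already available: the three types are mutually exclusive and exhaust all $L$-algebras (the Frobenius trichotomy stated before Lemma \ref{more}); Theorem \ref{hTstrong} supplies the forward implications (real $\Rightarrow$ dense, complex $\Rightarrow$ (1/2)-dense, quaternion $\Rightarrow$ (1/4)-dense); and density is monotone (dense $\Rightarrow$ (1/2)-dense $\Rightarrow$ (1/4)-dense). Granting these, it suffices to establish two negative statements: (d) an algebra of \emph{complex} type is \emph{not} dense, and (e) an algebra of \emph{quaternion} type is \emph{not} (1/2)-dense. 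Once (d) and (e) are in hand, every ``only if'' direction follows by pure logic. For instance, if $\mathcal{A}$ is dense it cannot be complex (by (d)) nor quaternion (by (e) together with monotonicity), so it is real; if $\mathcal{A}$ is (1/2)-dense but not dense it cannot be real (real $\Rightarrow$ dense) nor quaternion (by (e)), so it is complex; and if $\mathcal{A}$ is (1/4)-dense but not (1/2)-dense it can be neither real nor complex (both are (1/2)-dense), so it is quaternion. The ``if'' directions are immediate from Theorem \ref{hTstrong} combined with (d), (e).

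For (d), I would fix $S\in\mathbb{D}$ with $S^{2}=-\mathrm{id}$ (the image of $i$ under $\mathbb{D}\cong\mathbb{C}$), choose $0\neq x_{1}\in\mathcal{X}^{F}$ and set $x_{2}=Sx_{1}$. Then $\{x_{1},x_{2}\}$ is $\mathbb{R}$-linearly independent, since $Sx_{1}=cx_{1}$ with $c\in\mathbb{R}$ would force $c^{2}=-1$. By Lemma \ref{more} every $T\in\mathcal{A}$ commutes with $S$ on $\mathcal{X}^{F}$, and as $\mathcal{X}^{F}$ is $\mathcal{A}$-invariant we obtain the exact relation $Tx_{2}=S(Tx_{1})$; hence the achievable pair $(Tx_{1},Tx_{2})$ lies on $\mathrm{graph}(S)$ for every $T$. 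By Theorem \ref{closable}, $S$ is closable on $\mathcal{X}$, so $\overline{\mathrm{graph}(S)}=\mathrm{graph}(\bar{S})$ is a \emph{closed} subspace of $\mathcal{X}\oplus\mathcal{X}$, and it is proper because $(0,y)\in\mathrm{graph}(\bar{S})$ forces $y=\bar{S}0=0$. Taking any target off this subspace---for example $y_{1}=0$ and any $y_{2}\neq0$---at positive distance $\delta$, and choosing $\varepsilon<\delta$, no $T$ can satisfy both $\|Tx_{1}-y_{1}\|<\varepsilon$ and $\|Tx_{2}-y_{2}\|<\varepsilon$, so $\mathcal{A}$ is not dense.

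For (e) I would run the same idea with the quaternion units. Fix $I,J,K\in\mathbb{D}$ (so $\mathbb{D}\cong\mathbb{H}$), pick $0\neq x\in\mathcal{X}^{F}$, and set $x_{1}=x$, $x_{2}=Ix$, $x_{3}=Jx$, $x_{4}=Kx$; these are $\mathbb{R}$-linearly independent because $\mathbb{D}$ is a division algebra acting faithfully and $1,I,J,K$ are $\mathbb{R}$-independent in $\mathbb{D}$. Writing $q_{1}=1,q_{2}=I,q_{3}=J,q_{4}=K$ and $w=Tx$, Lemma \ref{more} gives $Tx_{j}=q_{j}w$ for every $T$, so for any ordered pair $(j_{1},j_{2})\in\{1,2,3,4\}^{2}$ the achievable pair satisfies $Tx_{j_{2}}=S_{j_{1}j_{2}}(Tx_{j_{1}})$ with $S_{j_{1}j_{2}}=q_{j_{2}}q_{j_{1}}^{-1}\in\mathbb{D}$; that is, it lies on $\mathrm{graph}(S_{j_{1}j_{2}})$. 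This produces finitely many operators in $\mathbb{D}$, each closable by Theorem \ref{closable}, hence finitely many \emph{proper closed} subspaces $\mathrm{graph}(\bar{S}_{j_{1}j_{2}})$ of $\mathcal{X}\oplus\mathcal{X}$; the case $j_{1}=j_{2}$ gives $S=\mathrm{id}$, i.e. the diagonal, which automatically disposes of repeated indices. Since a real vector space is never a finite union of proper subspaces, I can choose a target $(y_{1},y_{2})$ off all of them; the subspaces being closed, its distance $\delta$ to their union is positive, and with $\varepsilon<\delta$ no choice of $j_{1},j_{2}$ and $T$ yields both $\|Tx_{j_{1}}-y_{1}\|<\varepsilon$ and $\|Tx_{j_{2}}-y_{2}\|<\varepsilon$. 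Taking $n=2$ in the definition of (1/2)-density, this shows $\mathcal{A}$ is not (1/2)-dense.

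I expect the only genuine obstacle to be the possible \emph{unboundedness} of the operators in $\mathbb{D}$. If they were bounded, a constraint such as $Tx_{2}=S(Tx_{1})$ would instantly force $y_{2}\approx Sy_{1}$ and kill density; for a merely closable $S$ this pointwise inference fails. The correct substitute is that the constraint confines the achievable tuples to the \emph{closed} proper subspace $\mathrm{graph}(\bar{S})$, which is precisely where Theorem \ref{closable} is used and what furnishes the positive separation $\delta$ defeating approximation. The remaining ingredients---the logical bookkeeping of the trichotomy, the independence of the chosen vectors, and the elementary fact that a vector space is not a finite union of proper subspaces---are routine.
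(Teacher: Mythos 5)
Your proposal is correct and follows essentially the same route as the paper: the forward implications come from Theorem \ref{hTstrong}, and both negative statements (complex $\Rightarrow$ not dense, quaternion $\Rightarrow$ not (1/2)-dense) are obtained, exactly as in the paper, by combining the commutation Lemma \ref{more} with the closability of operators in $\mathbb{D}$ (Theorem \ref{closable}). The only differences are cosmetic: the paper argues with sequences ($T_{n}x_{1}\rightarrow 0$, $T_{n}Sx_{1}\rightarrow y$ forces $y=0$) and in the quaternion case fixes the target $(0,y)$ so that a single closable $S=S_{2}S_{1}^{-1}$ yields the contradiction, whereas you phrase the same obstruction via closed graphs and positive distance, avoiding all sixteen graph closures at once via the finite-union-of-proper-subspaces fact.
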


\begin{proof}
In all cases the "only if"\; part needs to be proved.

(i) Suppose that a dense $L$-algebra $\mathcal{A}$ is not of real type. Then
there is an operator $S\in\mathbb{D}$ which is is not a scalar multiple of
$1_{\mathcal{X}^{F}}$. Therefore there is $x\in\mathcal{X}^{F}$ such that $x$
and $Sx$ are linearly independent. Choose $0\neq y\in\mathcal{X}$. Since
$\mathcal{A}$ is dense then there is a sequence $T_{n}\in\mathcal{A}$ with
$T_{n}x\to0$, $T_{n}Sx\to y$. So $ST_{n}x\to y$ which is impossible since $S$
is closable by Theorem \ref{closable}.

(ii) Using (i) we have only to show that an $(1/2)$-dense $L$-algebra cannot
be of quaternion type. Suppose that $\mathcal{A}$ is of quaternion type, so
$\dim\mathbb{D}=4$. Choose $0\neq x_{0}\in\mathcal{X}^{F}$. Then the space
$\mathcal{K}=\mathbb{D}x_{0}$ is 4-dimensional. Choose $0\neq y\in\mathcal{X}%
$. Since $\mathcal{A}$ is $(1/2)$-dense there are $x_{1},x_{2}\in\mathcal{K}$
and a sequence $T_{n}\in\mathcal{A}$ with $T_{n}x_{1}\rightarrow0$,
$T_{n}x_{2}\rightarrow y$. By definition there are $S_{1},S_{2}\in\mathbb{D}$
such that $x_{i}=S_{i}x_{0}$. Therefore $x_{2}=Sx_{1}$ where $S=S_{2}%
S_{1}^{-1}$. Then arguing as above we get $y=\lim T_{n}x_{2}=\lim T_{n}%
Sx_{1}=\lim ST_{n}x_{1}$. Since $S$ is closable, we get $y=0$, a contradiction.

(iii) Follows directly from Theorem \ref{hTstrong}.
\end{proof}

\begin{corollary}
\label{closL}An $L$-algebra $\mathcal{A}$ belongs to real\emph{,} complex or
quaternion type if and only if its (SOT)-closure $\overline{\mathcal{A}}$
belongs to the same type.
\end{corollary}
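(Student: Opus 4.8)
The plan is to reduce everything to Theorem \ref{join}, which already expresses the three types \emph{purely} in terms of the approximation properties \emph{dense}, \emph{(1/2)-dense}, and \emph{(1/4)-dense}. The crucial observation is that each of these is a statement about approximating finitely many prescribed vectors to within an arbitrary $\varepsilon$, and any such statement is insensitive to passing to the (SOT)-closure. So the whole corollary should follow once I check that $\mathcal{A}$ and $\overline{\mathcal{A}}$ enjoy exactly the same density properties.

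First I would record that $\overline{\mathcal{A}}$ is again an $L$-algebra, so that Theorem \ref{join} applies to it. It contains $\mathcal{A}$, hence a non-zero finite rank operator, and it is transitive because $\mathcal{A}$ already is. That the (SOT)-closure of an algebra is an algebra is standard: given $A,B\in\overline{\mathcal{A}}$ and vectors $x_{1},\dots,x_{m}$, one first picks $B'\in\mathcal{A}$ close to $B$ on the $x_{i}$, then picks $A'\in\mathcal{A}$ close to $A$ on the vectors $B'x_{i}$; since $A$ is bounded, $A'B'\in\mathcal{A}$ then approximates $AB$ on the $x_{i}$.

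Next I would prove the key stability lemma: for each $k$, the algebra $\mathcal{A}$ is $(1/k)$-dense if and only if $\overline{\mathcal{A}}$ is $(1/k)$-dense, and similarly for ordinary density. One implication is immediate from $\mathcal{A}\subseteq\overline{\mathcal{A}}$, since the witnessing operator may already be taken in $\mathcal{A}$. For the converse, suppose $\overline{\mathcal{A}}$ is $(1/k)$-dense and fix $\varepsilon>0$, a linearly independent family $x_{1},\dots,x_{kn}$, and targets $y_{1},\dots,y_{n}$. By hypothesis there are indices $j_{1},\dots,j_{n}$ and $T_{0}\in\overline{\mathcal{A}}$ with $\|y_{i}-T_{0}x_{j_{i}}\|<\varepsilon/2$. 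Since $\mathcal{A}$ is (SOT)-dense in $\overline{\mathcal{A}}$, I may choose $T\in\mathcal{A}$ with $\|Tx_{j_{i}}-T_{0}x_{j_{i}}\|<\varepsilon/2$ for $i=1,\dots,n$; then $\|y_{i}-Tx_{j_{i}}\|<\varepsilon$ with the \emph{same} indices, so $\mathcal{A}$ is $(1/k)$-dense. The same two-step approximation handles plain density.

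Finally I would conclude. By the lemma, $\mathcal{A}$ and $\overline{\mathcal{A}}$ satisfy exactly the same properties among \emph{dense}, \emph{(1/2)-dense}, \emph{(1/4)-dense}; since Theorem \ref{join} pins down the type by precisely this list, the two algebras are of the same type. The only genuinely delicate point is the backward direction of the lemma, where the approximating operator from $\mathcal{A}$ must be matched to the index set $\{j_{1},\dots,j_{n}\}$ already produced for $\overline{\mathcal{A}}$; because this set is finite and fixed before the approximation is performed, this presents no real obstacle.
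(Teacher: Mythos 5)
Your proposal is correct and takes essentially the same route as the paper: reduce to Theorem \ref{join} and show that $(1/k)$-density (and density) transfers between $\mathcal{A}$ and $\overline{\mathcal{A}}$ by the two-step $\varepsilon/2$ approximation, using (SOT)-density of $\mathcal{A}$ in $\overline{\mathcal{A}}$ on the finitely many vectors $x_{j_{1}},\dots,x_{j_{n}}$ already selected. Your extra verification that $\overline{\mathcal{A}}$ is itself an $L$-algebra is a detail the paper leaves implicit, not a different argument.
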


\begin{proof}
By Theorem \ref{join}, it suffices to show that for $k=1,2,4$, an algebra
$\mathcal{A}$ is $(1/k)$-dense if and only if $\overline{\mathcal{A}}$ (as
well as any algebra that lies between them) is $(1/k)$-dense. Clearly if
$\mathcal{A}$ is $(1/k)$-dense then each set containing $\mathcal{A}$ is
$(1/k)$-dense.

Conversely suppose that $\overline{\mathcal{A}}$ (or some intermediate
algebra) is $(1/k)$-dense. By definition, for any $\varepsilon>0$, for every
linearly independent family $(x_{1},...,x_{kn})\subset\mathcal{X}$ and any
family $(y_{1},...,y_{n})\subset\mathcal{X}$, there are $j_{1},...,j_{n}%
\in\{1,...,kn\}$ and $T\in\overline{\mathcal{A}}$ with $\Vert y_{i}-Tx_{j_{i}%
}\Vert<\varepsilon/2$, $1\leq i\leq n$. Now by definition of (SOT), there is
$K\in\mathcal{A}$ such that $\Vert Kx_{j_{i}}-Tx_{j_{i}}\Vert<\varepsilon/2$,
for $1\leq i\leq n$. Then $\Vert y_{i}-Kx_{j_{i}}\Vert<\varepsilon$, for
$1\leq i\leq n$. So $\mathcal{A}$ is ($1/k$)-dense.\bigskip
\end{proof}

Now we will characterize $L$-algebras of all types in terms of the ranks of
its elements. Recall that the \textit{rank}, $\mathrm{rank}(T)$, of an
operator $T$ acting on a linear space $\mathcal{M}$ is defined as the
dimension of its range $T\mathcal{M}$.

\begin{proposition}
\label{Grank}\emph{(i)} If $\mathcal{A}$ is an $L$-algebra of complex type
then $\mathrm{rank}(T)$ is even, for any $T\in\mathcal{A}^{F}$.

\emph{(ii)} If $\mathcal{A}$ is an $L$-algebra of quaternion type then
$\mathrm{rank}(T)$ is divisible by four, for any $T\in\mathcal{A}^{F}$.
\end{proposition}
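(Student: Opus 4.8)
The plan is to exploit the fact, established in Lemma~\ref{more}, that every operator in $\mathcal{A}$ is $\mathbb{D}$-linear on $\mathcal{X}^{F}$, and then to study the range of a finite-rank $T\in\mathcal{A}^{F}$ as a $\mathbb{D}$-module. First I would fix $T\in\mathcal{A}^{F}$ and observe that its range $\mathcal{M}=T\mathcal{X}^{F}$ is a $\mathbb{D}$-invariant subspace of $\mathcal{X}^{F}$: indeed, for any $\Lambda\in\mathbb{D}$ and $x\in\mathcal{X}^{F}$ we have $\Lambda(Tx)=T(\Lambda x)\in\mathcal{M}$ by Lemma~\ref{more}. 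Hence $\mathcal{M}$ is a finite-dimensional vector space over the division ring $\mathbb{D}$.

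The key point is then purely linear-algebraic: a finite-dimensional real vector space that carries the structure of a $\mathbb{D}$-module has its real dimension equal to $\dim_{\mathbb{R}}(\mathbb{D})$ times its $\mathbb{D}$-dimension. In the complex case $\dim_{\mathbb{R}}(\mathbb{D})=2$, so $\mathrm{rank}(T)=\dim_{\mathbb{R}}\mathcal{M}=2\dim_{\mathbb{C}}\mathcal{M}$ is even; in the quaternion case $\dim_{\mathbb{R}}(\mathbb{D})=4$, so $\mathrm{rank}(T)=4\dim_{\mathbb{H}}\mathcal{M}$ is divisible by four. This immediately yields both (i) and (ii) once $\mathcal{M}$ is known to be a genuine $\mathbb{D}$-module.

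The one subtlety I would be careful about is that $\mathbb{D}$ by definition acts only on $\mathcal{X}^{F}$, not on all of $\mathcal{X}$, so I must keep the whole argument inside $\mathcal{X}^{F}$. This is harmless because $\mathcal{M}=T\mathcal{X}^{F}\subset\mathcal{X}^{F}$, and $\mathrm{rank}(T)$ computed as $\dim_{\mathbb{R}}(T\mathcal{X})$ agrees with $\dim_{\mathbb{R}}(T\mathcal{X}^{F})$: since $\mathcal{X}^{F}$ is dense in $\mathcal{X}$ and $T$ is bounded with finite-dimensional range, $T\mathcal{X}^{F}$ is dense in the finite-dimensional space $T\mathcal{X}$ and hence equals it. The main obstacle, to the extent there is one, is just verifying this identification of ranges cleanly; the module-dimension count itself is routine.
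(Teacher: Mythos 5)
Your proof is correct and follows essentially the same route as the paper: both identify the range of $T$ as a $\mathbb{D}$-invariant subspace of $\mathcal{X}^{F}$ and conclude that its real dimension is $\dim_{\mathbb{R}}(\mathbb{D})$ times its $\mathbb{D}$-dimension, giving $2$ in the complex case and $4$ in the quaternion case. The only difference is that you spell out the identification $T\mathcal{X}=T\mathcal{X}^{F}$ (via density of $\mathcal{X}^{F}$ and finite-dimensionality of the range), a point the paper leaves implicit; this is a welcome extra bit of care, not a divergence in method.
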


\begin{proof}
(i) If $\mathcal{A}$ is an $L$-algebra of complex type then $\mathcal{X}^{F}$
has the structure of complex space and all operators in $\mathcal{A}^{F}$ are
$\mathbb{C}$-linear. In particular the range of an operator $T\in
\mathcal{A}^{F}$ is a complex subspace of $\mathcal{X}^{F}$ and its dimension
equals its complex dimension multiplied by 2.

(ii) Similarly, the dimension of an $\mathbb{H}$-linear subspace equals its
quaternion dimension multiplied by 4.\bigskip
\end{proof}

For any $L$-algebra $\mathcal{A}$, we set
\[
r(\mathcal{A}) = \min\{\mathrm{rank}(T): T\in\mathcal{A}^{F}\}.
\]

\begin{theorem}
\label{ranks} An $L$-algebra $\mathcal{A}$ is of real, complex or quaternion
type if and only if $r(\mathcal{A})$ \emph{= 1, 2} or \emph{4,} respectively.
\end{theorem}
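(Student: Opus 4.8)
The plan is to show that for an $L$-algebra of any given type one has $r(\mathcal{A})=\dim_{\mathbb{R}}\mathbb{D}$, which equals $1$, $2$ or $4$ according as $\mathbb{D}$ is isomorphic to $\mathbb{R}$, $\mathbb{C}$ or $\mathbb{H}$. Since by Frobenius's theorem every $L$-algebra is of exactly one of the three types, this computation attaches to each type a distinct value of $r(\mathcal{A})$, and the biconditional then follows formally: the forward implications are mutually exclusive and exhaust all cases, so each of the values $1$, $2$, $4$ of $r(\mathcal{A})$ can occur only for the corresponding type. Thus it suffices to prove the forward implication, for which I would establish the two inequalities $r(\mathcal{A})\geq\dim_{\mathbb{R}}\mathbb{D}$ and $r(\mathcal{A})\leq\dim_{\mathbb{R}}\mathbb{D}$ separately.

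The lower bound is immediate from Proposition \ref{Grank}. In the real case there is nothing beyond the trivial remark that a non-zero operator has rank at least $1$. In the complex (resp.\ quaternion) case Proposition \ref{Grank} asserts that every operator in $\mathcal{A}^{F}$ has even rank (resp.\ rank divisible by four), so $r(\mathcal{A})\geq 2$ (resp.\ $r(\mathcal{A})\geq 4$). Hence in all three cases $r(\mathcal{A})\geq\dim_{\mathbb{R}}\mathbb{D}$.

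The substance of the argument, and the step I expect to be the main obstacle, is the reverse inequality: I would exhibit an operator in $\mathcal{A}^{F}$ whose range is a one-dimensional $\mathbb{D}$-subspace of $\mathcal{X}^{F}$, since such an operator has real rank exactly $\dim_{\mathbb{R}}\mathbb{D}$. Starting from any non-zero $T\in\mathcal{A}^{F}$, consider its range $\mathcal{R}=T\mathcal{X}$. As $T\mathcal{X}\subset\mathcal{X}^{F}$, $\mathcal{X}^{F}$ is dense in $\mathcal{X}$, $T$ is continuous and $\mathcal{R}$ is finite-dimensional, one has $\mathcal{R}=T\mathcal{X}^{F}$; and since $T$ is $\mathbb{D}$-linear on $\mathcal{X}^{F}$ by Lemma \ref{more}, $\mathcal{R}$ is a $\mathbb{D}$-subspace, say with $\mathbb{D}$-basis $v_{1},\dots,v_{k}$. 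By Corollary \ref{Jac} the algebra $\mathcal{A}^{F}$ is strictly dense over $\mathbb{D}$, so there is $S\in\mathcal{A}^{F}$ with $Sv_{1}=v_{1}$ and $Sv_{2}=\cdots=Sv_{k}=0$. Then $ST\in\mathcal{A}^{F}$, because $\mathcal{A}^{F}$ is an algebra of finite-rank operators, and the $\mathbb{D}$-linearity of $S$ gives $ST\mathcal{X}=S\mathcal{R}=\mathbb{D}\,Sv_{1}=\mathbb{D}v_{1}$, a one-dimensional $\mathbb{D}$-space; moreover $ST\neq 0$, since some $x$ satisfies $Tx=v_{1}$ and then $STx=Sv_{1}=v_{1}\neq 0$. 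Therefore $\mathrm{rank}(ST)=\dim_{\mathbb{R}}\mathbb{D}$, so $r(\mathcal{A})\leq\dim_{\mathbb{R}}\mathbb{D}$, and together with the lower bound this yields $r(\mathcal{A})=\dim_{\mathbb{R}}\mathbb{D}$, completing the proof.
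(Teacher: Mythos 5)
Your proof is correct and follows essentially the same route as the paper: both arguments rest on the strict $\mathbb{D}$-density of $\mathcal{A}^{F}$ (Corollary \ref{Jac}) and the $\mathbb{D}$-linearity of ranges to produce an operator in $\mathcal{A}^{F}$ whose range is a single $\mathbb{D}$-line, hence of real rank $\dim_{\mathbb{R}}\mathbb{D}$. The only difference is bookkeeping: the paper starts from a minimal-rank operator $T_{0}$ and uses minimality in place of your explicit lower bound via Proposition \ref{Grank}, but the core construction (composing with a density-supplied operator to cut the range down to one $\mathbb{D}$-dimension) is the same.
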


\begin{proof}
Let $T_{0}$ be an operator of minimal rank in $\mathcal{A}$. We denote by
$\mathcal{Z}$ the subspace $T_{0}\mathcal{X}^{F}$ of $\mathcal{X}^{F}$ and let
$z_{1},...,z_{m}$ be a basis of $\mathcal{Z}$ as a linear space over
$\mathbb{D}$. Our aim is to show that $m=1$.

We claim that the algebra $\mathcal{B} = T_{0}\mathcal{A}|_{\mathcal{Z}} =
\{T_{0}K|_{\mathcal{Z}}: K\in\mathcal{A}^{F}\}$ coincides with the algebra of
all $\mathbb{D}$-linear operators on ${\mathcal{Z}}$.

Indeed, let $V$ be an arbitrary $\mathbb{D}$-linear operator on $\mathcal{Z}$
and $w_{i}=Vz_{i}$, $1\leq i\leq m$. Since $\mathcal{Z}=T_{0}\mathcal{X}^{F}$
there are vectors $x_{1},...,x_{m}\in\mathcal{X}^{F}$ such that $w_{i}%
=T_{0}x_{i}$, for all $i$. By Corollary \ref{Jac}, there is $K\in\mathcal{A}$
with $Kz_{i}=x_{i}$, whence the operator $Q=T_{0}K|_{\mathcal{Z}}%
\in\mathcal{B}$ satisfies the condition $Qz_{i}=Vz_{i}$, $1\leq i\leq m$, and
therefore coincides with $V$.

It follows that $\mathcal{B}$ contains an operator $T_{1} = T_{0}%
K_{1}|_{\mathcal{Z}}$ whose range as a $\mathbb{D}$-linear space has dimension
1. Clearly the operator $T_{0}K_{1}T_{0}\in\mathcal{A}$ has the same range as
$T_{1}$. Thus, by the choice of $T_{0}$, the $\mathbb{D}$-rank of $T_{0}$
equals 1. So the $\mathbb{R}$-rank of $T_{0}$ is 1, 2 or 4, if $\mathcal{A}$
is of real, complex or quaternion type respectively.
\end{proof}

\begin{corollary}
\label{maxim} Each $L$-algebra is contained in a maximal $L$-algebra of the
same type.
\end{corollary}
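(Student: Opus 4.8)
The plan is to obtain the maximal algebra by Zorn's Lemma. Let $k = r(\mathcal{A}) \in \{1,2,4\}$ be the number that, by Theorem \ref{ranks}, encodes the type of $\mathcal{A}$, and let $\mathcal{P}$ be the collection of all $L$-algebras $\mathcal{B}$ with $\mathcal{A} \subseteq \mathcal{B} \subseteq \mathcal{B}(\mathcal{X})$ and $r(\mathcal{B}) = k$, partially ordered by inclusion. Then $\mathcal{A} \in \mathcal{P}$, so $\mathcal{P} \neq \varnothing$, and a maximal element of $\mathcal{P}$ is exactly a maximal $L$-algebra of the same type as $\mathcal{A}$ containing it (any strictly larger same-type $L$-algebra would, since it contains $\mathcal{A}$, itself lie in $\mathcal{P}$). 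So it suffices to verify the hypothesis of Zorn's Lemma: every chain in $\mathcal{P}$ has an upper bound in $\mathcal{P}$.

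Given a chain $\{\mathcal{B}_{\alpha}\}$ in $\mathcal{P}$, I would take $\mathcal{B} = \bigcup_{\alpha} \mathcal{B}_{\alpha}$ as the candidate upper bound and check that $\mathcal{B} \in \mathcal{P}$. That $\mathcal{B}$ is a subalgebra is routine: any two of its elements lie in a common $\mathcal{B}_{\alpha}$ (the family is totally ordered), so $\mathcal{B}$ is closed under the algebra operations. It is transitive because it contains the transitive algebra $\mathcal{A}$, and any closed subspace invariant under $\mathcal{B}$ is \emph{a fortiori} invariant under $\mathcal{A}$. Finally $\mathcal{B}$ contains the non-zero finite rank operators of $\mathcal{A}$, so it is an $L$-algebra.

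The only substantive point is that $\mathcal{B}$ has the same type as the $\mathcal{B}_{\alpha}$, i.e. that $r(\mathcal{B}) = k$; this is where Theorem \ref{ranks} and Proposition \ref{Grank} do the work. Since $\mathcal{A} \subseteq \mathcal{B}$ we have $\mathcal{A}^{F} \subseteq \mathcal{B}^{F}$ and hence $r(\mathcal{B}) \leq k$. Conversely, every finite rank operator of $\mathcal{B}$ belongs to $\mathcal{B}^{F} = \bigcup_{\alpha} \mathcal{B}_{\alpha}^{F}$, hence to some $\mathcal{B}_{\alpha}$, and by Proposition \ref{Grank} its rank is a positive multiple of $k$ (this is automatic when $k = 1$); therefore $r(\mathcal{B}) \geq k$. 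Thus $r(\mathcal{B}) = k$ and, by Theorem \ref{ranks}, $\mathcal{B}$ is of the same type, so $\mathcal{B} \in \mathcal{P}$ and Zorn's Lemma applies.

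I expect the main, and essentially the only, obstacle to be conceptual rather than technical: a priori, enlarging an $L$-algebra can lower the minimal rank and thus push it into a lower type, so one cannot simply maximize among all $L$-algebras containing $\mathcal{A}$. The rank-divisibility of Proposition \ref{Grank} resolves this precisely because it is an element-by-element condition and so is inherited by directed unions; this is exactly what keeps $\mathcal{B}$ inside $\mathcal{P}$.
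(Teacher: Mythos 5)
Your proof is correct and follows essentially the same route as the paper: Zorn's Lemma applied to the union of a chain, with Theorem \ref{ranks} guaranteeing the union keeps the same minimal rank and hence the same type. The only cosmetic difference is that you invoke Proposition \ref{Grank} for the lower bound $r(\mathcal{B})\geq k$, where the definition of $r(\mathcal{B}_{\alpha})=k$ as a minimum already gives this directly.
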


\begin{proof}
By Zorn's lemma it suffices to show that the union of a linearly ordered
family of algebras of given type belongs to this type. Let $\mathcal{A}%
=\cup_{\alpha\in\Lambda}\mathcal{A}_{\alpha}$, where the family $(\mathcal{A}%
_{\alpha})_{\alpha\in\Lambda}$ is linearly ordered (or up-directed). If all
$\mathcal{A}_{\alpha}$ have the same type, then by Theorem \ref{ranks} they
have the same minimal rank: $r(\mathcal{A}_{\alpha})=k$, for all $\alpha$. It
follows that $r(\mathcal{A})=k$, because each operator in $\mathcal{A}$
belongs to some $\mathcal{A}_{\alpha}$. Again applying Theorem \ref{ranks} we
conclude that type of $\mathcal{A}$ is the same as the type of $\mathcal{A}%
_{\alpha}$.
\end{proof}

\begin{remark}
\label{maxalg} \emph{ It follows from Corollary \ref{closL} that maximal
algebras of a given type are Lomonosov algebras.}
\end{remark}

\section{ Lomonosov algebras of complex type}

It follows from Theorem \ref{hTstrong} that Lomonosov algebras of real type
are dense and therefore can be described trivially:
\[
\mathcal{A} = \mathcal{B}(\mathcal{X})
\]
because the density of an algebra $\mathcal{A}$ is equivalent to the condition
that the (SOT)-closure of $\mathcal{A}$ in $\mathcal{B}(X)$ coincides with
$\mathcal{B}(X)$.

Passing on to algebras of complex type, we begin with discussion of some examples.

We call by \textit{a partial complex structure on $\mathcal{X}$} (PCS, for
brevity) an operator $S$ defined on a dense subspace $\mathcal{D}%
(S)\subset\mathcal{X}$ and satisfying the condition
\begin{equation}
S^{-1}=-S, \label{3.2}%
\end{equation}
which implies the equality $S\mathcal{D}(S)=\mathcal{D}(S)$.

One of the ways to construct a PCS is to choose a sequence of linearly
independent 2-dimensional subspaces $\mathcal{X}_{n}$ and operators $S_{n}$ on
$\mathcal{X}_{n}$ satisfying the condition $\mathcal{S}_{n}^{2}+\mathbf{1}%
_{\mathcal{X}_{n}}=0$ (they have matrices of the form
\[%
\begin{pmatrix}
0 & -1\\
1 & 0
\end{pmatrix}
\]
in appropriate bases. Then we set $\mathcal{D}=\mathrm{lin}(\cup
_{n}\mathcal{X}_{n})$ and define $S$ on $\mathcal{D}$ as the direct sum of
$S_{n}$. The properties of such operators depend on the choice of subspaces
$\mathcal{X}_{n}$ and the bases in $\mathcal{X}_{n}$. For example, if
$\mathcal{X}$ is a Hilbert space one can take mutually orthogonal
$\mathcal{X}_{n}$ and orthonormal bases in each $\mathcal{X}_{n}$, then $S$
will be bounded. If the bases in $\mathcal{X}_{n}$ is such that $\Vert
S_{n}\Vert\rightarrow\infty$ then $S$ will be closable but not bounded.

It is important that the closure of a closable PCS is a PCS, as the following
lemma shows.

\begin{lemma}
\label{closPCS} Let $\mathcal{X}$ be a Banach space\emph{,} $\mathcal{X}_{0}$
its dense subspace\emph{,} and let $S$\emph{: }$\mathcal{X}_{0}\rightarrow
\mathcal{X}_{0}$ be a closable bijective operator satisfying the condition
$S^{-1}=-S$. Then the closure $\overline{S}$ of $S$ is a closed bijective
operator satisfying the condition $(\overline{S})^{-1}=-\overline{S}$.
\end{lemma}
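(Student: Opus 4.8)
The plan is to reformulate the hypothesis and then track how it transfers to the closure through the defining property of the graph. First I would observe that on $\mathcal{X}_0$ the condition $S^{-1}=-S$ is equivalent to $S^2=-\mathbf{1}$ together with the surjectivity $S\mathcal{X}_0=\mathcal{X}_0$: applying $S$ to $S^{-1}=-S$ gives $\mathbf{1}=-S^2$, and conversely $S^2=-\mathbf{1}$ forces $S(-S)=(-S)S=\mathbf{1}$. Write $T=\overline{S}$; since $S$ is closable, $T$ is a closed operator whose domain $\mathcal{D}(\overline{S})$ contains the dense set $\mathcal{X}_0$ and is therefore itself dense, as required of a PCS.

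The heart of the argument is to show that $\mathcal{D}(\overline{S})$ is invariant under $T$ and that $T^2=-\mathbf{1}$ on it. Fix $x\in\mathcal{D}(\overline{S})$ and choose $x_n\in\mathcal{X}_0$ with $x_n\to x$ and $Sx_n\to Tx$. Here is the one place where bijectivity of $S$ is essential: because $S\mathcal{X}_0=\mathcal{X}_0$, the vectors $y_n:=Sx_n$ again lie in $\mathcal{X}_0$, and using $S^2=-\mathbf{1}$ one computes $Sy_n=S^2x_n=-x_n\to -x$. Thus $y_n\to Tx$ while $Sy_n\to -x$, so the pair $(Tx,-x)$ lies in the closure of the graph of $S$. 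By definition of $T$ this yields $Tx\in\mathcal{D}(\overline{S})$ and $T(Tx)=-x$; that is, $T$ maps $\mathcal{D}(\overline{S})$ into itself and satisfies $T^2=-\mathbf{1}$ there. I expect this passage to the limit — ensuring that the image $Tx$ lands back in the domain — to be the only genuinely non-routine step, and it is handled precisely by running the sequence $Sx_n$ in place of $x_n$, which is available only thanks to $S\mathcal{X}_0=\mathcal{X}_0$.

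Finally I would read off bijectivity and the inverse relation purely algebraically from $T^2=-\mathbf{1}$. For injectivity: if $Tx=0$ then $x=-T^2x=-T(Tx)=0$. For surjectivity onto $\mathcal{D}(\overline{S})$: given $w\in\mathcal{D}(\overline{S})$, the vector $-Tw$ lies in $\mathcal{D}(\overline{S})$ by the invariance just proved, and $T(-Tw)=-T^2w=w$. Hence $T$ restricts to a bijection of $\mathcal{D}(\overline{S})$, and the identity $T^2=-\mathbf{1}$ says exactly that $T^{-1}=-T$, i.e. $(\overline{S})^{-1}=-\overline{S}$, completing the verification that $\overline{S}$ is again a PCS.
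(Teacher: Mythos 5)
Your proof is correct and follows essentially the same route as the paper: both arguments take an approximating sequence $x_n\to x$, $Sx_n\to \overline{S}x$, then run the sequence $Sx_n$ (which stays in $\mathcal{X}_0$) through $S$ to get $S(Sx_n)=-x_n\to -x$, concluding that $\overline{S}x\in\mathcal{D}(\overline{S})$ and $\overline{S}^2x=-x$, from which the invariance of the domain and the relation $(\overline{S})^{-1}=-\overline{S}$ follow. Your explicit read-off of injectivity and surjectivity from $T^2=-\mathbf{1}$ is just a slightly more verbose rendering of the paper's final step.
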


\begin{proof}
Let $x\in\mathcal{D}(\overline{S})$ and $y=\overline{S}x$. Then there is a
sequence $x_{n}\in\mathcal{X}_{0}$ with $x_{n}\rightarrow x$, $Sx_{n}%
\rightarrow y$. Since $x_{n}=-S(Sx_{n})$, we see that $y\in\mathcal{D}%
(\overline{S})$ (so $\overline{S}(\mathcal{D}(\overline{S}))\subset
\mathcal{D}(\overline{S})$) and $x=-\overline{S}y$ (so $\mathcal{D}%
(\overline{S})\subset\overline{S}(\mathcal{D}(\overline{S}))$. Thus
$\overline{S}(\mathcal{D}(\overline{S}))=\mathcal{D}(\overline{S})$ and
$\overline{S}^{2}x=-x$, for all $x\in\mathcal{D}(\overline{S})$. Hence
$(\overline{S})^{-1}=-\overline{S}$ on $\mathcal{D}(\overline{S})$.\bigskip
\end{proof}

Recall that for any operator $W$ defined on a linear subspace $\mathcal{D}(W)$
of a Banach space $\mathcal{X}$, its graph $G_{W}=\{(x,Wx):x\in\mathcal{D}%
(W)\}$ is a linear subspace of the Banach space $\mathcal{X\oplus X}$. We
identify the space $\mathcal{X}^{\ast}\oplus\mathcal{X}^{\ast}$ with the dual
space of $\mathcal{X\oplus X}$, setting
\[
(f_{1}\oplus f_{2})(x_{1}\oplus x_{2})=f_{1}(x_{2})-f_{2}(x_{1}).
\]
If $\mathcal{D}(W)$ is dense in $\mathcal{X}$ then the annihilator
$G_{W}^{\bot}$ of $G_{W}$ in $\mathcal{X}^{\ast}\oplus\mathcal{X}^{\ast}$ does
not contain pairs of the form $(0,f)$, $f\neq0$, and therefore there is an
operator $V$ defined on a subspace $\mathcal{D}_{V}\subset\mathcal{X}^{\ast}$
such that $G_{W}^{\bot}=\{(f,Vf)$: $f\in\mathcal{D}_{V}\}$. The operator $V$
is called the adjoint of $W$ and denoted by $W^{\ast}$. The domain
$\mathcal{D}(W^{\ast})$ of $W^{\ast}$ can be described as follows
\begin{equation}
\mathcal{D}(W^{\ast})=\{f\in\mathcal{X}^{\ast}\text{: there is }C>0\text{ such
that }|f(Wx)|\leq C\Vert x\Vert\text{ for all }x\in\mathcal{D}(W)\}.
\label{sopr}%
\end{equation}
If $W$ is closable then $\mathcal{D}_{W^{\ast}}$ is weak*-dense in
$\mathcal{X}^{\ast}$. If, moreover, $W$ is closed then $G_{W}$ is closed and
therefore (see for example \cite[Theorem 4.4.6]{Rud}), $(G_{W}^{\bot})^{\bot
}\cap(\mathcal{X}\oplus\mathcal{X})=G_{W}$. In other words,
\begin{equation}
\text{if }x,y\in\mathcal{X}\text{ and }f(x)=(W^{\ast}f)(y),\text{ for all
}f\in\mathcal{D}(W^{\ast}),\text{ then }y\in\mathcal{D}(W),\text{ }x=Wy.
\label{dualpair}%
\end{equation}

In particular, if $S$ is a closable PCS, then the adjoint operator $S^{\ast}$
is defined on a weak*-dense subspace $\mathcal{D}(S^{\ast})\subset
\mathcal{X}^{\ast}$. If $f\in\mathcal{D}(S^{\ast})$ and $g=S^{\ast}f$ then
$g\in\mathcal{D}(S^{\ast})$ and $S^{\ast}g=-f$. Indeed,%
\[
Sx\in\mathcal{D}(S)\text{ by (\ref{3.2}), and }g(Sx)=S^{\ast}%
f(Sx)\overset{(\ref{dualpair})}{=}f(SSx)=-f(x),\text{ for }x\in\mathcal{D}%
(S),
\]
whence $|g(Sx)|\leq\Vert f\Vert\Vert x\Vert$ and the claim follows from
(\ref{sopr}).

For any $v\in\mathcal{D}(S)$ and $f\in\mathcal{D}(S^{\ast})$, we set
\begin{equation}
T_{v,f}=v\otimes f-Sv\otimes S^{\ast}f. \label{2ten}%
\end{equation}
It is obvious that on $\mathcal{D}(S)$ the operator $T_{v,f}$ acts by the
rule
\[
T_{v,f}x=f(x)v-f(Sx)Sv.
\]
Clearly, $T_{v,f}$ is a rank 2 operator that maps $\mathcal{X}$ to
$\mathcal{D}$. Let us show that it commutes with $S$ on $\mathcal{D}(S)$.
Indeed,
\[
T_{v,f}Sx=f(Sx)v-f(S^{2}x)Sv=-f(Sx)S^{2}v+f(x)Sv=S(-f(Sx)Sv+f(x)v)=ST_{v,f}x,
\]
for $x\in\mathcal{D}(S)$.

For any PCS $S$, we set
\[
\mathcal{A}_{S}=\{T\in\mathcal{B}(\mathcal{X}):TS\subset ST\}.
\]

\begin{theorem}
\label{comType}If a partial complex structure $S$ on a real Banach space
$\mathcal{X}$ is closed\emph{,} then $\mathcal{A}_{S}$ is a Lomonosov algebra
of complex type on $\mathcal{X}$.
\end{theorem}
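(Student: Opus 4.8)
The plan is to verify the defining features of a Lomonosov algebra of complex type one at a time: that $\mathcal{A}_S$ is a subalgebra of $\mathcal{B}(\mathcal{X})$, that it is (SOT)-closed, that it is transitive and contains a non-zero finite rank operator (so that it is an $L$-algebra and, being (SOT)-closed, a Lomonosov algebra), and finally that its type is complex. That $\mathcal{A}_S$ is an algebra is routine: if $T_1S\subset ST_1$ and $T_2S\subset ST_2$, then, tracking domains, one gets $(T_1T_2)S\subset S(T_1T_2)$ and $(\lambda T_1+T_2)S\subset S(\lambda T_1+T_2)$. For the finite rank part I would simply invoke the operators $T_{v,f}=v\otimes f-Sv\otimes S^{\ast}f$ constructed just before the statement: each is a bounded operator of rank $2$ (since $v,Sv$ are linearly independent, as $Sv=\lambda v$ would give $-v=S^2v=\lambda^2v$, impossible over $\mathbb{R}$, and likewise $f,S^{\ast}f$ are independent), and it was already checked that $T_{v,f}$ commutes with $S$ on $\mathcal{D}(S)$, so $T_{v,f}\in\mathcal{A}_S^{F}$.

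For (SOT)-closedness I would use the hypothesis that $S$ is closed in an essential way. Suppose $T_{\alpha}\in\mathcal{A}_S$ and $T_{\alpha}\to T$ in (SOT), and fix $x\in\mathcal{D}(S)$. Then $T_{\alpha}x\to Tx$, and applying (SOT) convergence to the fixed vector $Sx$ gives $ST_{\alpha}x=T_{\alpha}Sx\to TSx$. Since $S$ is closed, these two limits force $Tx\in\mathcal{D}(S)$ and $STx=TSx$; as $x$ was an arbitrary element of $\mathcal{D}(S)$, this says $TS\subset ST$, i.e. $T\in\mathcal{A}_S$. This is precisely the point where closedness of $S$, rather than mere closability, is needed.

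The transitivity is the heart of the argument. Given $x\neq 0$, I would first produce $f_0\in\mathcal{D}(S^{\ast})$ with $f_0(x)\neq 0$, which exists because $\mathcal{D}(S^{\ast})$ is weak*-dense in $\mathcal{X}^{\ast}$ and hence has trivial pre-annihilator in $\mathcal{X}$. Writing $\alpha=f_0(x)\neq 0$ and $\beta=(S^{\ast}f_0)(x)$, the formula for $T_{v,f_0}$ gives $T_{v,f_0}x=\alpha v-\beta Sv=(\alpha I-\beta S)v$ for every $v\in\mathcal{D}(S)$. Using $S^2=-I$ on $\mathcal{D}(S)$ one checks that $\alpha I-\beta S$ has the two-sided inverse $(\alpha^2+\beta^2)^{-1}(\alpha I+\beta S)$ on $\mathcal{D}(S)$ (with $\alpha^2+\beta^2>0$ since $\alpha\neq 0$), so $(\alpha I-\beta S)\mathcal{D}(S)=\mathcal{D}(S)$. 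Hence $\{T_{v,f_0}x:v\in\mathcal{D}(S)\}=\mathcal{D}(S)$, which is dense in $\mathcal{X}$, and therefore $\overline{\mathcal{A}_Sx}=\mathcal{X}$. Thus $\mathcal{A}_S$ is transitive, and combining this with the previous points it is a Lomonosov algebra.

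Finally, to pin down the type I would invoke Theorem \ref{ranks} and show $r(\mathcal{A}_S)=2$. The operators $T_{v,f}$ give $r(\mathcal{A}_S)\leq 2$, so it remains to exclude rank one. If $T=u\otimes g\in\mathcal{A}_S$ with $u,g\neq 0$, then for $x\in\mathcal{D}(S)$ the relation $TS\subset ST$ forces $Tx=g(x)u\in\mathcal{D}(S)$ (so $u\in\mathcal{D}(S)$, on choosing $x$ with $g(x)\neq 0$) and $g(Sx)u=TSx=STx=g(x)Su$. Picking $x\in\mathcal{D}(S)$ with $g(x)\neq 0$ makes the right-hand side a non-zero multiple of $Su$ while the left-hand side is a multiple of $u$, contradicting the linear independence of $u$ and $Su$. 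Hence $r(\mathcal{A}_S)=2$ and $\mathcal{A}_S$ is of complex type. The main obstacles I anticipate are the domain bookkeeping for the unbounded $S$ throughout (for instance the fact that $T\mathcal{X}\subset\mathcal{D}(S)$ for $T\in\mathcal{A}_S^{F}$, which follows from density of $\mathcal{D}(S)$ together with finiteness of rank) and making sure closedness of $S$ is used exactly where closability would not suffice.
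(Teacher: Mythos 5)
Your proof is correct, but it diverges from the paper's argument at the two key steps, and the comparison is instructive. For transitivity, the paper constructs a functional satisfying \emph{two} conditions, $f_{0}(x_{0})=1$ and $(S^{\ast}f_{0})(x_{0})=0$, via a kernel-inclusion argument that invokes the duality relation (\ref{dualpair}) (hence closedness of $S$) together with the absence of real eigenvalues of $S$; you instead take \emph{any} $f_{0}\in\mathcal{D}(S^{\ast})$ with $f_{0}(x)\neq0$ (available from weak*-density of $\mathcal{D}(S^{\ast})$, which needs only closability) and observe that $v\mapsto T_{v,f_{0}}x=(\alpha\mathbf{1}-\beta S)v$ is surjective onto $\mathcal{D}(S)$ because $\alpha\mathbf{1}-\beta S$ has the explicit inverse $(\alpha^{2}+\beta^{2})^{-1}(\alpha\mathbf{1}+\beta S)$ on $\mathcal{D}(S)$. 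This is cleaner and even shows $\mathcal{A}_{S}^{F}x\supseteq\mathcal{D}(S)$ for every $x\neq0$, with closedness of $S$ used only where it is truly indispensable, namely the (SOT)-closedness of $\mathcal{A}_{S}$. For the type, the paper identifies $\mathcal{X}^{F}=\mathcal{A}_{S}^{F}\mathcal{X}$ with $\mathcal{D}(S)$, concludes $S\in\mathbb{D}$ (excluding real type), and excludes quaternion type via Proposition \ref{Grank}; you instead apply Theorem \ref{ranks}, exhibiting rank-$2$ operators and ruling out rank-one operators $u\otimes g$ by showing they would force $u$ to be a real eigenvector of $S$, which is impossible since $S^{2}=-\mathbf{1}$ on $\mathcal{D}(S)$. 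Both routes are valid, and yours is arguably more self-contained; what the paper's route buys is the identification $\mathcal{X}^{F}=\mathcal{D}(S)$ (stated separately as Proposition \ref{domain}), which is not a throwaway: it is used essentially later, in the locality theorem (Theorem \ref{locNess}) and in the non-similarity argument of Corollary \ref{cardinality}, so your proof of the theorem itself would still leave that fact to be established.
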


\begin{proof}
Clearly, $\mathcal{A}_{S}$ is an algebra. To see that $\mathcal{A}_{S}$ is
(SOT)-closed let $T_{n}\in\mathcal{A}_{S}$ and $T_{n}\overset{sot}{\rightarrow
}T\in\mathcal{B}(\mathcal{X}).$ Then, for each $x\in\mathcal{D}(S)$,
$T_{n}x\rightarrow Tx$ and $ST_{n}x=T_{n}Sx\rightarrow TSx$. Since $S$ is
closed, $Tx\in\mathcal{D}(S)$ and $STx=TSx$. This means that $TS\subset ST$,
$T\in\mathcal{A}_{S}$.

Let $x_{0}\in\mathcal{X}$, $y_{0}\in\mathcal{X}$ and $\varepsilon>0$. Our aim
is to find an operator $T\in\mathcal{A}_{S}$ with $\Vert Tx_{0}-y_{0}%
\Vert<\varepsilon$. In fact, we will find such $T$ in $\mathcal{A}_{S}%
^{F}:=\mathcal{A}_{S}\cap\mathcal{F}(\mathcal{X})$.

Let us show that there is $f_{0}\in\mathcal{D}(S^{\ast})$ with $f_{0}%
(x_{0})=1$ and $(S^{\ast}f_{0})(x_{0})=0$. Indeed, it suffices to find
$f\in\mathcal{D}(S^{\ast})$ with $(S^{\ast}f)(x_{0})=0$ and $f(x_{0})\neq0$.
If this is impossible then considering functionals $F_{1}(f)=(S^{\ast}%
f)(x_{0})$, $F_{2}(f)=f(x_{0})$ on $\mathcal{D}(S^{\ast})$ we have that $\ker
F_{1}\subset\ker F_{2}$. Therefore there is $\alpha\in\mathbb{R}$ with
$F_{2}=\alpha F_{1}$. Thus $f(x_{0})=\alpha(S^{\ast}f)(x_{0})$, for all
$f\in\mathcal{D}(S^{\ast})$.
By (\ref{dualpair}), $\alpha x_{0}\in\mathcal{D}(S)$ and $x_{0}=S(\alpha
x_{0})$. However, $S$ has no eigenvalues (since $S^{2}\subset-\mathbf{1}%
_{\mathcal{X}}$). This contradiction proves our claim.

Now we choose $u\in\mathcal{D}(S)$ with $\Vert y_{0}-u\Vert<\varepsilon$ and
set $T=T_{u,f_{0}}$. Then it follows from (\ref{2ten}) and the definition of
$f_{0}$ that
\[
Tx_{0}=f_{0}(x_{0})u-(S^{\ast}f_{0})(x_{0})Su=u.
\]
Thus $\Vert Tx_{0}-y_{0}\Vert=\Vert u-y_{0}\Vert<\varepsilon$. This means that
the algebra $\mathcal{A}_{S}$ is transitive. Since $T$ commutes with $S$ and
the rank of $T$ is 2, $T\in\mathcal{A}_{S}^{F}$ whence $\mathcal{A}_{S}$ is an
$L$-algebra. Being (SOT)-closed, $\mathcal{A}_{S}$ is a Lomonosov algebra.

Let us show that the space $\mathcal{X}^{F}:=\mathcal{A}_{S}^{F}%
\mathcal{X}:=\mathrm{lin}(\cup_{T\in\mathcal{A}_{S}^{F}}T\mathcal{X})$
coincides with $\mathcal{D}(S)$. Indeed, if $T\in\mathcal{A}_{S}^{F}$ then
$T\mathcal{D}(S)\subset\mathcal{D}(S)$ (this is true for all operators in
$\mathcal{A}_{S}$). Since $\mathcal{D}(S)$ is dense in $\mathcal{X}$, we have
that $T\mathcal{X}\subset\overline{T\mathcal{D}(S)}$. But $T\mathcal{D}(S)$ is
closed because $\dim T\mathcal{D}(S)<\infty$. Therefore $T\mathcal{X}\subset
T\mathcal{D}(S)\subset\mathcal{D}(S)$. It follows that $\mathcal{X}^{F}%
\subset\mathcal{D}(S)$. On the other hand, we have shown above that for
arbitrary $u\in\mathcal{D}(S)$, there is such $f\in\mathcal{D}(S^{\ast})$ that
the range of the operator $T_{u,f}\in\mathcal{A}_{S}^{F}$ contains $u$.
Therefore $\mathcal{D}(S)\subset\mathcal{X}^{F}$.

It follows that $S\in\mathbb{D}$, so $\mathcal{A}_{S}$ is not of real type.
Since the rank of $T_{u,f}$ equals 2, $\mathcal{A}_{S}$ is not of quaternion
type by Proposition \ref{Grank}. Thus $\mathcal{A}_{S}$ is of complex
type.\bigskip
\end{proof}

It will be convenient to formulate separately a statement which was obtained
during the proof of Theorem \ref{comType}.

\begin{proposition}
\label{domain} If $S$ is a closed complex partial structure then the subspace
$\mathcal{X}^{F}:=\mathcal{A}_{S}^{F}\mathcal{X}$ coincides with
$\mathcal{D}(S)$ and $S\in\mathbb{D}.$
\end{proposition}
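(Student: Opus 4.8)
The plan is to reassemble the two assertions, both of which were in essence verified in the course of proving Theorem \ref{comType}. First I would establish the inclusion $\mathcal{X}^{F}\subset\mathcal{D}(S)$. Every $T\in\mathcal{A}_{S}^{F}$ satisfies $TS\subset ST$, so $T$ carries $\mathcal{D}(S)$ into $\mathcal{D}(S)$; since $\mathcal{D}(S)$ is dense and $T$ is bounded, $T\mathcal{X}\subset\overline{T\mathcal{D}(S)}$, and because $T\mathcal{D}(S)$ is finite-dimensional, hence closed, one gets $T\mathcal{X}=T\mathcal{D}(S)\subset\mathcal{D}(S)$. Taking the linear span over all $T\in\mathcal{A}_{S}^{F}$ yields $\mathcal{X}^{F}\subset\mathcal{D}(S)$.

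For the reverse inclusion $\mathcal{D}(S)\subset\mathcal{X}^{F}$ I would use the rank-$2$ operators $T_{v,f}$ of (\ref{2ten}). Fixing a nonzero $x_{0}\in\mathcal{X}$, the crucial step---already carried out inside the proof of Theorem \ref{comType}---produces a functional $f_{0}\in\mathcal{D}(S^{\ast})$ with $f_{0}(x_{0})=1$ and $(S^{\ast}f_{0})(x_{0})=0$; its existence rests on the observation that $S$ has no eigenvalues, which follows from $S^{2}\subset-\mathbf{1}_{\mathcal{X}}$. Then for an arbitrary $u\in\mathcal{D}(S)$ the operator $T_{u,f_{0}}\in\mathcal{A}_{S}^{F}$ satisfies $T_{u,f_{0}}x_{0}=f_{0}(x_{0})u-(S^{\ast}f_{0})(x_{0})Su=u$, so $u$ lies in its range and thus $u\in\mathcal{X}^{F}$. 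Combining the inclusions gives $\mathcal{X}^{F}=\mathcal{D}(S)$.

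To conclude that $S\in\mathbb{D}$, I would recall that $\mathbb{D}$ consists of all linear operators on $\mathcal{X}^{F}$ commuting with $\mathcal{A}_{S}^{F}$. Since $\mathcal{X}^{F}=\mathcal{D}(S)$ and $S\mathcal{D}(S)=\mathcal{D}(S)$, the operator $S$ restricts to an everywhere-defined linear bijection of $\mathcal{X}^{F}$, and the relation $TS=ST$ on $\mathcal{D}(S)=\mathcal{X}^{F}$, valid for every $T\in\mathcal{A}_{S}^{F}$, shows that this restriction commutes with all of $\mathcal{A}_{S}^{F}$. Hence $S\in\mathbb{D}$.

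The argument is largely bookkeeping drawn from Theorem \ref{comType}; the single substantive ingredient is the existence of the separating functional $f_{0}$, and the only place demanding slight care is the closedness of the finite-dimensional range $T\mathcal{D}(S)$, which is what forces $T\mathcal{X}$ back into $\mathcal{D}(S)$ rather than merely into its closure. I anticipate no further obstacle.
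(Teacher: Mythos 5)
Your proposal is correct and takes essentially the same approach as the paper: the paper states Proposition \ref{domain} precisely as a record of facts established inside the proof of Theorem \ref{comType}, and your argument reassembles exactly those steps (the finite-dimensional-range trick for $\mathcal{X}^{F}\subset\mathcal{D}(S)$, the operators $T_{u,f_{0}}$ for the reverse inclusion). Your final paragraph verifying that $S$ restricts to a bijection of $\mathcal{X}^{F}$ commuting with $\mathcal{A}_{S}^{F}$ merely spells out what the paper leaves as ``It follows that $S\in\mathbb{D}$,'' so no new ideas are introduced and none are missing.
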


Our next aim is to show that the algebras $\mathcal{A}_{S}$ majorize all
$L$-algebras of complex type.

\begin{theorem}
\label{ClasComp} Every L-algebra $\mathcal{A}$ of complex type is contained in
the algebra $\mathcal{A}_{S}$, for some closed partial complex structure $S$.
\end{theorem}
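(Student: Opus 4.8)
The plan is to produce $S$ as the closure of the ``imaginary unit'' of the division algebra $\mathbb{D}$. Since $\mathcal{A}$ is of complex type, $\mathbb{D}$ is isomorphic to $\mathbb{C}$, so I would first choose an element $S_{0}\in\mathbb{D}$ corresponding to $i$, i.e. with $S_{0}^{2}=-1_{\mathcal{X}^{F}}$. As an element of the division algebra $\mathbb{D}$, the operator $S_{0}$ is an invertible $\mathbb{R}$-linear map of $\mathcal{X}^{F}$ onto itself, and $S_{0}^{2}=-1_{\mathcal{X}^{F}}$ forces $S_{0}^{-1}=-S_{0}$. Thus $S_{0}\colon\mathcal{X}^{F}\to\mathcal{X}^{F}$ is a bijective operator satisfying condition (\ref{3.2}), defined on the subspace $\mathcal{X}^{F}$, which is dense in $\mathcal{X}$ by the remarks preceding Lemma \ref{str}.

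Next I would pass to the closure. By Theorem \ref{closable}, every operator in $\mathbb{D}$ is closable on $\mathcal{X}$, so in particular $S_{0}$ is closable. Applying Lemma \ref{closPCS} with $\mathcal{X}_{0}=\mathcal{X}^{F}$, I conclude that $S:=\overline{S_{0}}$ is a closed bijective operator with $S^{-1}=-S$; that is, $S$ is a closed partial complex structure on $\mathcal{X}$.

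It then remains to verify that $\mathcal{A}\subset\mathcal{A}_{S}$, that is, that every $T\in\mathcal{A}$ satisfies $TS\subset ST$. Fix $T\in\mathcal{A}$ and $x\in\mathcal{D}(S)$; I must show $Tx\in\mathcal{D}(S)$ and $STx=TSx$. I would choose a sequence $x_{n}\in\mathcal{X}^{F}=\mathcal{D}(S_{0})$ with $x_{n}\to x$ and $S_{0}x_{n}\to Sx$. Since $\mathcal{X}^{F}$ is $\mathcal{A}$-invariant, each $Tx_{n}$ lies in $\mathcal{X}^{F}$, and by Lemma \ref{more} the operator $T$ commutes with every element of $\mathbb{D}$, in particular with $S_{0}$, on $\mathcal{X}^{F}$; hence $S_{0}(Tx_{n})=T(S_{0}x_{n})$. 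As $T$ is bounded, $Tx_{n}\to Tx$ and $S_{0}(Tx_{n})=T(S_{0}x_{n})\to T(Sx)$. Because $Tx_{n}\in\mathcal{D}(S_{0})\subset\mathcal{D}(S)$ with $S(Tx_{n})=S_{0}(Tx_{n})$ and $S$ is closed, the pair $(Tx,TSx)$ lies in the graph of $S$, so $Tx\in\mathcal{D}(S)$ and $STx=TSx$. This gives $T\in\mathcal{A}_{S}$, whence $\mathcal{A}\subset\mathcal{A}_{S}$.

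The construction itself is essentially routine once $S_{0}$ is identified inside $\mathbb{D}$; the step I expect to require the most care is the passage to the closure in the last paragraph. There one must be sure that the approximating vectors $Tx_{n}$ remain inside $\mathcal{X}^{F}$ (so that the purely algebraic commutation of Lemma \ref{more} is available), and that the closedness of $S$ is invoked in exactly the right form to upgrade the limiting relation to the graph inclusion $TS\subset ST$. A secondary check, easy but worth recording, is that the $S_{0}$ furnished by the isomorphism $\mathbb{D}\cong\mathbb{C}$ genuinely meets all the hypotheses of Lemma \ref{closPCS}, namely that it maps the dense subspace $\mathcal{X}^{F}$ bijectively onto itself and satisfies $S_{0}^{-1}=-S_{0}$ there.
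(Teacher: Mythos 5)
Your proposal is correct and follows essentially the same route as the paper: take the ``imaginary unit'' $W$ of $\mathbb{D}$ (your $S_{0}$), use Theorem \ref{closable} and Lemma \ref{closPCS} to form the closed PCS $S=\overline{S_{0}}$, and then use Lemma \ref{more} together with the closedness of $S$ to push the commutation relation $TS\subset ST$ from $\mathcal{X}^{F}$ to all of $\mathcal{D}(S)$. The only cosmetic difference is that the paper first verifies commutation on $\mathcal{X}^{F}$ and then passes to limits, whereas you fold both steps into a single graph-closure argument; the substance is identical.
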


\begin{proof}
By our assumption, the commutant $\mathbb{D}$ of $\mathcal{A}^{F}$ on
$\mathcal{X}^{F}$ is isomorphic to $\mathbb{C}$. This means that
$\mathbb{D}=\mathrm{lin}\{\mathbf{1}_{\mathcal{X}^{F}},W\}$ where $W$ is an
operator on $\mathcal{X}^{F}$ commuting with all $K\in\mathcal{A}^{F}$ and
satisfying the condition $W^{-1}=-W$.

By Theorem \ref{closable}, $W$ is closable; let $S$ be its closure. By Lemma
\ref{closPCS}, $S$ is a PCS. We will consider the Lomonosov algebra
$\mathcal{A}_{S}$.

To show that $\mathcal{A}\subset\mathcal{A}_{S}$, we have to prove that
$TS\subset ST$, for each $T\in\mathcal{A}$. Firstly, let $x\in\mathcal{X}^{F}%
$. Then $Tx\in\mathcal{X}^{F}$ and $STx=TSx$ since $S|_{\mathcal{X}^{F}}=W$
and $T$ commutes with $W$ by Lemma \ref{more}.

Let now $x\in\mathcal{D}(S)$ and $y=Sx$. As $S$ is the closure of $W$, there
are $x_{n}\in\mathcal{X}^{F}$ with $x_{n}\rightarrow x$ and $Sx_{n}=W{x}%
_{n}\rightarrow y$. Therefore $Tx_{n}\rightarrow Tx$ and, by above,
$STx_{n}=TSx_{n}\rightarrow Ty$. It follows that $Tx\in\mathcal{D}(S)$ and
$STx=Ty=TSx$. Thus $TS\subset ST$ and we are done.\bigskip
\end{proof}

An operator algebra $\mathcal{A}$ is called \textit{local} (see \cite{FAM}) if
it coincides with the (SOT)-closure of $\mathcal{A}\cap\mathcal{K}%
(\mathcal{X})$, where $\mathcal{K}(\mathcal{X})$ is the ideal of all compact
operators. Now we will show that each algebra $\mathcal{A}_{S}$ has a property
which can be considered as a reinforced version of locality.

\begin{theorem}
\label{locNess} If $\mathcal{A}=\mathcal{A}_{S}$, where $S$ is a closed
partial complex structure, then $\mathcal{A}^{F}$ is \emph{(SOT)}-dense in
$\mathcal{A}$.
\end{theorem}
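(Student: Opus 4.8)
The plan is to show that every $T\in\mathcal{A}_{S}$ lies in the \emph{(SOT)}-closure of $\mathcal{A}_{S}^{F}$. Fix vectors $x_{1},\dots,x_{n}\in\mathcal{X}$ and $\varepsilon>0$; I must produce $K\in\mathcal{A}_{S}^{F}$ with $\Vert Kx_{i}-Tx_{i}\Vert<\varepsilon$. The decisive idea is to interpolate \emph{exactly} at the prescribed nodes $x_{i}$ but with \emph{approximate} target vectors drawn from $\mathcal{D}(S)=\mathcal{X}^{F}$ (Proposition \ref{domain}); then the entire error sits in the targets, and no bound on $\Vert K\Vert$ is ever required. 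The building blocks are the rank-$2$ operators $T_{v,f}=v\otimes f-Sv\otimes S^{\ast}f\in\mathcal{A}_{S}^{F}$ (with $v\in\mathcal{D}(S)$, $f\in\mathcal{D}(S^{\ast})$), which act by $T_{v,f}x=\psi(x)\,v$ for all $x\in\mathcal{X}$, where $\psi(x)=f(x)-i(S^{\ast}f)(x)$ and complex scalars act on $\mathcal{D}(S)$ through $i=S$.

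First I would pass to a basis $u_{1},\dots,u_{n}$ of $\mathcal{E}:=\mathrm{lin}(x_{1},\dots,x_{n})$: approximating $T$ on a basis yields approximation on the $x_{i}$ with a fixed multiplicative constant coming from the (fixed) coordinates, so it suffices to handle the $u_{p}$. Then I would isolate $V:=\{b\in\mathcal{E}\cap\mathcal{D}(S):Sb\in\mathcal{E}\}$, the largest $S$-invariant (hence finite-dimensional complex) subspace of $\mathcal{E}$, and choose the $u_{p}$ so that $u_{1},\dots,u_{2r}$ is a basis of $V$ and $u_{2r+1},\dots,u_{n}$ spans a complement. I assign targets $w_{p}:=Tu_{p}$ for $p\leq 2r$ — these lie in $\mathcal{D}(S)$ because $u_{p}\in V\subset\mathcal{D}(S)$ and every operator of $\mathcal{A}_{S}$ preserves $\mathcal{D}(S)$ — and for $p>2r$ I pick $w_{p}\in\mathcal{D}(S)$ with $\Vert w_{p}-Tu_{p}\Vert$ as small as desired.

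The core step is solving the interpolation $Ku_{p}=w_{p}$ inside $\mathcal{A}_{S}^{F}$. Writing $w_{p}=\sum_{l}c_{pl}e_{l}$ over a complex basis $\{e_{l}\}$ of $\mathrm{lin}_{\mathbb{C}}\{w_{p}\}$, it is enough, for each $l$, to find $f_{l}\in\mathcal{D}(S^{\ast})$ with $f_{l}(u_{p})-i(S^{\ast}f_{l})(u_{p})=c_{pl}$; then $K=\sum_{l}T_{e_{l},f_{l}}$ works. Solvability is governed by the range of $\Theta:\mathcal{D}(S^{\ast})\to\mathbb{R}^{2n}$, $f\mapsto(f(u_{p}),(S^{\ast}f)(u_{p}))_{p}$. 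Since the codomain is finite-dimensional, the range equals the annihilator of its own annihilator, and the duality (\ref{dualpair}) identifies that annihilator precisely with the pairs of real scalars $(s_{p}),(t_{p})$ for which $b:=\sum t_{p}u_{p}\in\mathcal{D}(S)$ and $\sum s_{p}u_{p}=-Sb$ — that is, with relations living entirely inside $V$.

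The main obstacle, and the reason for isolating $V$, is the tension between two demands: the targets must lie in $\mathcal{D}(S)$ (since $\mathcal{A}_{S}^{F}$ maps $\mathcal{X}$ into $\mathcal{D}(S)$), yet they must \emph{exactly} obey every forced linear relation that the vectors $Tu_{p}$ satisfy, while $Tu_{p}$ generally escapes $\mathcal{D}(S)$. The resolution is that, by the previous paragraph, the only forced relations come from $V$, where I have set $w_{p}=Tu_{p}$ exactly; there the required consistency $\sum s_{p}w_{p}=-S\sum t_{p}w_{p}$ reduces, via the $\mathbb{C}$-linearity of $T$ on $\mathcal{D}(S)$ and $T(Sb)=S(Tb)$, to the identity $T(-Sb)=-S(Tb)$, which holds. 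Hence every target is consistent, the functionals $f_{l}$ exist, and $K:=\sum_{l}T_{e_{l},f_{l}}\in\mathcal{A}_{S}^{F}$ satisfies $Ku_{p}=w_{p}$. Since $Ku_{p}=Tu_{p}$ for $p\leq 2r$ and $\Vert Ku_{p}-Tu_{p}\Vert=\Vert w_{p}-Tu_{p}\Vert$ is as small as we like for $p>2r$, transferring back to the $x_{i}$ gives $\Vert Kx_{i}-Tx_{i}\Vert<\varepsilon$, proving the \emph{(SOT)}-density of $\mathcal{A}_{S}^{F}$ in $\mathcal{A}_{S}$.
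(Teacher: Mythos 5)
Your proposal is correct, and it takes a genuinely different route from the paper's proof. The paper shares your structural idea of singling out the maximal $S$-invariant (``complex'') part of a finite-dimensional span and treating it exactly, but its interpolation mechanism differs: it first proves, by an induction modeled on Lemma \ref{indep} (again driven by the operators $T_{v,f}$ and the duality (\ref{dualpair})), a Claim that some $K\in\mathcal{A}^{F}$ makes $Ku_{1},\dots,Ku_{N}$ $\mathbb{C}$-linearly independent, and then appeals to strict $\mathbb{C}$-density of $\mathcal{A}^{F}$ on $\mathcal{X}^{F}$ (Corollary \ref{Jac}, i.e.\ Jacobson density) to find $R$ with $RKu_{i}=q_{i}$; the directions $Su_{i}$ are then handled through commutation together with the bound $\Vert S(RK-T)u_{i}\Vert\leq C\Vert(RK-T)u_{i}\Vert$ with $C=\Vert S|_{\mathcal{L}_{0}}\Vert$. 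You dispense with both the independence Claim and Jacobson density: your approximant is an explicit finite sum of the rank-two operators $T_{e_{l},f_{l}}$, and the existence of the functionals $f_{l}$ is settled by a purely finite-dimensional double-annihilator computation in $\mathbb{R}^{2n}$ combined with (\ref{dualpair}); the forced relations are shown to live inside your subspace $V$, where the exact choice $w_{p}=Tu_{p}$ together with $TS\subset ST$ satisfies them automatically. (One small point worth making explicit in a write-up: the vector identity $\sum s_{p}w_{p}=-S\sum t_{p}w_{p}$ is a priori only sufficient for the scalar compatibility conditions; it is in fact equivalent to them because the relation set is invariant under $((s_{p}),(t_{p}))\mapsto((-t_{p}),(s_{p}))$.) What your route buys: it is self-contained, and since the interpolation is exact on $V$ you never apply $S$ to an error vector --- this neatly sidesteps the estimate in the paper, which is delicate when $S$ is unbounded, since $(RK-T)u_{i}$ need not lie in $\mathcal{L}_{0}$ where $C$ controls $S$; your exactness device is precisely what repairs that step. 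What the paper's route buys: it recycles the Section 2 density machinery, and its independence Claim (a $\mathbb{C}$-linear analogue of Lemma \ref{indep}) is a statement of independent interest.
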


\begin{proof}
It suffices to show that for every finite-dimensional subspace $\mathcal{L}%
\subset\mathcal{X}$, and every $T\in\mathcal{A}$, the restriction
$T|_{\mathcal{L}}$ is close to $\mathcal{A}^{F}|_{\mathcal{L}}$. This means
that for any basis $\{e_{i}: i=1,...,\dim{\mathcal{L}}\}$ of $\mathcal{L}$ and
any $\varepsilon>0$, one can find $V\in\mathcal{A}^{F}$ with $\|Ve_{i}%
-Te_{i}\|<\varepsilon$.

Set $\mathcal{L}_{0} = \mathcal{L}\cap\mathcal{X}^{F}$. If $\mathcal{L}_{0}$
is not $\mathbb{C}$-linear (= $S$-invariant), then we denote by $\mathcal{L}%
^{\prime}_{0}$ its $\mathbb{C}$-linear span and replace $\mathcal{L}$ by
$\mathcal{L}+\mathcal{L}^{\prime}_{0}$. So we may assume that $\mathcal{L}%
_{0}$ is $\mathbb{C}$-linear. Denote by $C$ the norm of $S|_{\mathcal{L}_{0}}%
$. Since $S$ is antiinvolutive, $C\ge1$. Let $u_{1},...,u_{s}$ be a
$\mathbb{C}$-basis in $\mathcal{L}_{0}$, and let $w_{i} = Su_{i}$, $1\le i\le
s$. Let $\mathcal{M}$ be a complement of $\mathcal{L}_{0}$ in $\mathcal{L}$,
and $u_{s+1},...,u_{N}$ be a basis in $\mathcal{M}$. Then $\{u_{1},...,u_{N},
w_{1},...,w_{s}\}$ is a basis in $\mathcal{L}$.

Our main step will be the proof of the following\medskip

Claim: there is an operator $K\in\mathcal{A}^{F}$ such that the family
$Ku_{1},...,Ku_{N}$ is $\mathbb{C}$-linearly independent.\medskip

Indeed, if this is established then one can finish the proof as follows.

For all $i=1,...,N$ choose $q_{i}\in\mathcal{X}^{F}$ with $\Vert Tu_{i}%
-q_{i}\Vert<\varepsilon/C$. By strict $\mathbb{C}$-density of $\mathcal{A}%
^{F}$ on $\mathcal{X}^{F}$ (see Corollary \ref{Jac}), there is $R\in
\mathcal{A}^{F}$ with $RKu_{i}=q_{i}$, $i=1,...,N$. So, setting $V=RK,$ we get
all we need:
\[
\Vert Vu_{i}-Tu_{i}\Vert=\Vert q_{i}-p_{i}\Vert<\varepsilon/C\leq\varepsilon
\]
and, using the fact that $\mathcal{A}$ commutes with $S$ on $\mathcal{X}^{F}$
(see Lemma \ref{more}),
\[
\Vert Vw_{i}-Tw_{i}\Vert=\Vert VSu_{i}-TSu_{i}\Vert=\Vert S(V-T)u_{i}\Vert\leq
C\varepsilon/C=\varepsilon.
\]

We will prove the above Claim by induction. More precisely we will show by
induction that for each $n\le N$, there is an operator $K\in\mathcal{A}^{F}$
such that the family $Ku_{1},...,Ku_{n}$ is $\mathbb{C}$-linearly independent.

For $n=1$, the Claim is evident. Assume that it holds for $n=m<N$ and does not
hold for $n=m+1$. So for every $K\in\mathcal{A}^{F}$,
\[
Ku_{m+1}=\sum_{k=1}^{m}t_{k}(K)Ku_{k}%
\]
as in the proof of Lemma \ref{indep}, with the only difference that
coefficients $t_{k}(K)$ now belong to $\mathbb{C}$ (recall that the
multiplication by a complex number $a+ib$ acts in $\mathcal{X}^{F}$ as the
operator $a\mathbf{1_{\mathcal{X}^{F}}}+bS$). Repeating the arguments in the
proof of Lemma \ref{indep}), we again come to the conclusion that the numbers
$t_{k}(K)$ do not depend on $K$:
\[
t_{k}(K)=a_{k}\mathbf{1_{\mathcal{X}^{F}}}+b_{k}S.
\]

Thus
\begin{equation}
Ku_{m+1}=\sum_{k=1}^{m}(a_{k}+b_{k}S)Ku_{k},\text{ for all }K\in
\mathcal{A}^{F}. \label{eqM}%
\end{equation}
Let now $0\neq v\in\mathcal{X}^{F}=\mathcal{D}(S)$, $f\in\mathcal{D}(S^{\ast
})$ and $K=T_{v,f}$, the operator defined in (\ref{2ten}). Denoting $S^{\ast
}f$ by $g$, one can write
\[
Kx=f(x)v-g(x)Sv,\text{ for all }x\in\mathcal{X}.
\]
So we get from (\ref{eqM}) that
\[
f(u_{m+1})v-g(u_{m+1})Sv=\sum_{k=1}^{m}(a_{k}\mathbf{1}+b_{k}S)(f(u_{k}%
)v-g(u_{k})Sv),
\]
whence
\[
(f(u_{m+1})-\sum_{k=1}^{m}(a_{k}f(u_{k})+b_{k}g(u_{k})))v=(g(u_{m+1}%
)-\sum_{k=1}^{m}(a_{k}g(u_{k})-b_{k}f(u_{k})))Sv.
\]
Since $v$ and $Sv$ are not proportional, both parts are zero. Setting
$x_{0}=\sum_{k=1}^{m}a_{k}u_{k}$, $y_{0}=\sum_{k=1}^{m}b_{k}u_{k}$, we rewrite
this in the form:
\[
f(u_{m+1})=f(x_{0})+g(y_{0})\text{ and }g(u_{m+1})=g(x_{0})-f(y_{0}).
\]
Writing the second equality as $g(u_{m+1}-x_{0})=-f(y_{0})$ and setting
$z=x_{0}-u_{m+1},$ we have $(S^{\ast}f)(z)=f(y_{0})$ for all $f\in
\mathcal{D}(S^{\ast}).$ By (\ref{dualpair}), $z\in\mathcal{D}(S)$ and
$y_{0}=Sz=S(x_{0}-u_{m+1})$.

Since $\mathcal{D}(S)=\mathcal{X}^{F}$, by Proposition \ref{domain}, we see
that $u_{m+1}-x_{0}\in\mathcal{X}^{F}$. On the other hand, $x_{0}%
\in\mathrm{lin}(u_{1},...,u_{m})\subset\mathcal{L}$ and $u_{m+1}\in
\mathcal{L}$, so we obtain that $u_{m+1}-x_{0}\in\mathcal{X}^{F}%
\cap\mathcal{L}=\mathcal{L}_{0}$ whence $u_{m+1}\in\mathcal{L}_{0}%
+\mathrm{lin}(u_{1},...,u_{m})$. But this is possible only if $m+1\leq s$.

Indeed, let $q: \mathcal{L}\to\mathcal{L}/\mathcal{L}_{0}$ be the quotient
map. Then $q(u_{j}) = 0$, for $j\le s$. So if $m+1 > s$ then $q(u_{m+1})$ is a
linear combination of $(q(u_{j}))_{s<j<m+1}$. Since $\mathcal{M}%
\cap\mathcal{L}_{0} = \{0\}$ the map $q$ is injective on $\mathcal{M}$.
Therefore $u_{m+1}$ is a linear combination of $(u_{j})_{s<j<m+1}$, a contradiction.

Thus $m+1<s$, so we have that $y_{0}\in\mathcal{D}(S)$. Hence the validity of
the equality $f(u_{m+1})=f(x_{0})+g(y_{0})$ for all $f\in\mathcal{D}(S^{\ast
})$ implies that
\[
u_{m+1}=x_{0}+Sy_{0}=\sum_{k=1}^{m}(a_{k}u_{k}+b_{k}Su_{k})=\sum_{k=1}%
^{m}(a_{k}+ib_{k})u_{k},
\]
which contradicts the $\mathbb{C}$-linear independence of the family
$(u_{1},...,u_{s})$. The obtained contradiction proves the Claim. \bigskip
\end{proof}

It is not clear if closed PCSs exist in all real Banach spaces. If
$\mathcal{X}=\mathcal{H}$, an infinite-dimensional real Hilbert space, then
one can easily construct a bounded PCS --- it suffices to present
$\mathcal{H}$ as the orthogonal direct sum of two copies of a space
$\mathcal{H}_{0}$: $\mathcal{H}=\mathcal{H}_{0}\oplus\mathcal{H}_{0}$ and set
$S_{I}(x\oplus y)=(-y)\oplus x$. In fact we turn $\mathcal{H}$ into a complex
Hilbert space $\mathcal{H}_{c}$ if we denote the action of $S_{I}$ as the
multiplication by $i$.

Now one can introduce a large family of closed PCSs as follows. Choose in the
obtained complex Hilbert space $\mathcal{H}_{c}$ two closed $\mathbb{C}%
$-linear subspaces $\mathcal{M},\mathcal{N}$ forming a "generic pair":
\[
\mathcal{M}\cap\mathcal{N}=0,\;\;\;\overline{\mathcal{M}+{N}}=\mathcal{H}%
_{c}.
\]
Setting $\mathcal{D}=\mathcal{M}+{N}$ we define an operator $S_{\mathcal{M}%
,\mathcal{N}}$ on $\mathcal{D}$ by the rule
\[
S_{\mathcal{M},\mathcal{N}}(x+y)=ix-iy,\text{ for }x\in\mathcal{M}%
,y\in\mathcal{N};
\]
it is easy to check that $S_{\mathcal{M},\mathcal{N}}$ is a closed CPS in
$\mathcal{H}_{c}$. Indeed, if $x_{n}+y_{n}\rightarrow z$ and $ix_{n}%
-iy_{n}\rightarrow w$ where $x_{n}\in\mathcal{M}$, $y_{n}\in\mathcal{N}$,
then
\[
p:=(z-iw)/2=\lim x_{n}\in\mathcal{M}\text{ and }q:=(z+iw)/2=\lim y_{n}%
\in\mathcal{N}.
\]
So $z=p+q\in\mathcal{D}$, $w=ip-iq=S_{\mathcal{M},\mathcal{N}}%
(p+q)=S_{\mathcal{M},\mathcal{N}}z$ and $S_{\mathcal{M},\mathcal{N}}$ is
closed. The equality $S_{\mathcal{M},\mathcal{N}}^{-1}=-S_{\mathcal{M}%
,\mathcal{N}}$ is evident.

Using this construction we will show that at least in Hilbert spaces there are
many Lomonosov algebras of complex type.

\begin{corollary}
\label{cardinality} In a separable real infinite-dimensional Hilbert space
$\mathcal{H}$ there is a continuum of pairwise non-similar Lomonosov algebras
of complex type.
\end{corollary}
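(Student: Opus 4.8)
The plan is to construct a continuum of closed PCSs of the form $S_{\mathcal{M},\mathcal{N}}$ from a continuum of ``generic pairs'' of closed $\mathbb{C}$-linear subspaces in $\mathcal{H}_c$, parametrized in such a way that distinct parameters give rise to non-similar Lomonosov algebras $\mathcal{A}_{S_{\mathcal{M},\mathcal{N}}}$. First I would fix the splitting $\mathcal{H}=\mathcal{H}_0\oplus\mathcal{H}_0$ with its complex structure $S_I$, and take $\mathcal{M}$ to be a fixed closed infinite-dimensional-and-infinite-codimensional $\mathbb{C}$-subspace of $\mathcal{H}_c$. The parameter will control $\mathcal{N}$: I expect to exploit the theory of operator ranges (alluded to in the introduction) and to realize the complementary subspaces $\mathcal{N}$ as graphs of densely defined closed injective operators with dense range. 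Concretely, to each suitable positive self-adjoint (or injective closed) operator $A$ on $\mathcal{M}^{\perp}\to\mathcal{M}$ one associates the graph $\mathcal{N}_A=\{Ah\oplus h\}$, and the genericity conditions $\mathcal{M}\cap\mathcal{N}_A=0$ and $\overline{\mathcal{M}+\mathcal{N}_A}=\mathcal{H}_c$ translate into injectivity and density-of-range conditions on $A$. This gives a large, explicitly parametrized supply of closed PCSs, hence by Theorem~\ref{comType} a large supply of Lomonosov algebras of complex type.

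The second ingredient is a \emph{similarity invariant} that distinguishes the algebras. Two algebras $\mathcal{A}_{S}$ and $\mathcal{A}_{S'}$ are similar iff there is a bounded invertible $B\in\mathcal{B}(\mathcal{H})$ with $B\mathcal{A}_S B^{-1}=\mathcal{A}_{S'}$. The key observation is that $S$ is essentially recoverable from $\mathcal{A}_S$: by Proposition~\ref{domain}, $\mathcal{D}(S)=\mathcal{X}^F=\mathcal{A}_S^F\mathcal{X}$ and $S\in\mathbb{D}$, so the domain $\mathcal{D}(S)$ and the one-(complex-)dimensional commutant-generator $S$ are intrinsic to the algebra up to the ambiguity $S\mapsto \pm S$ (the two generators of $\mathbb{D}\cong\mathbb{C}$). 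Thus a similarity $B$ must carry $\mathcal{D}(S)$ onto $\mathcal{D}(S')$ and intertwine $S$ with $\pm S'$, i.e.\ $BS=\pm S'B$ on $\mathcal{D}(S)$. The plan is to attach to $S$ a genuinely \emph{metric} invariant of the operator range $\mathcal{D}(S)$ --- for instance the unitary (or bounded-similarity) equivalence class of the pair $(\mathcal{M},\mathcal{N})$, encoded via the positive operator $A$ above and its spectral data --- that is preserved under such bounded intertwiners, and to arrange a continuum of values of this invariant.

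The concrete realization I would carry out: choose a continuum of mutually ``incomparable'' operator ranges. Operator ranges $\mathcal{D}(S)=\operatorname{ran}(C)$ for injective positive $C$ carry the natural comparison $\operatorname{ran}(C_1)\subseteq\operatorname{ran}(C_2)$, and one knows (Fillmore--Williams type results) that there is a continuum of operator ranges no two of which are carried onto one another by a bounded invertible operator. I would select the subspaces $\mathcal{N}_t$, $t$ ranging over a continuum, so that the associated domains $\mathcal{D}(S_t)$ are pairwise non-equivalent as operator ranges under bounded isomorphisms. Then if $B\mathcal{A}_{S_s}B^{-1}=\mathcal{A}_{S_t}$, the intertwining forces $B\,\mathcal{D}(S_s)=\mathcal{D}(S_t)$ with $B$ bounded invertible, contradicting the non-equivalence of the ranges for $s\neq t$. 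Hence the $\mathcal{A}_{S_t}$ are pairwise non-similar, and since each is a Lomonosov algebra of complex type by Theorem~\ref{comType}, the continuum is established.

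\textbf{Main obstacle.} The hard part will be the rigidity step: showing that a similarity $B$ between $\mathcal{A}_{S_s}$ and $\mathcal{A}_{S_t}$ must map $\mathcal{D}(S_s)$ \emph{boundedly and boundedly-invertibly} onto $\mathcal{D}(S_t)$ and intertwine the structures. Recovering $\mathcal{D}(S)=\mathcal{A}_S^F\mathcal{X}$ and $\mathbb{D}=\{a\mathbf{1}+bS\}$ intrinsically is clean (Proposition~\ref{domain}, Lemma~\ref{more}), and $B$ visibly carries $\mathcal{A}_{S_s}^F$ to $\mathcal{A}_{S_t}^F$ and hence $\mathcal{D}(S_s)$ onto $\mathcal{D}(S_t)$; the delicate point is that $B$ restricted to these (generally non-closed, operator-range) domains need not be an isometry, so I must verify that the invariant chosen is robust against arbitrary bounded similarities rather than merely unitaries. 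This is precisely where the operator-range technology does the work: the comparison of operator ranges I invoke is already a \emph{bounded}-invertible invariant, so a continuum of pairwise boundedly-inequivalent ranges yields a continuum of pairwise non-similar algebras, and the obstacle is reduced to exhibiting such a family explicitly within the generic-pair construction above.
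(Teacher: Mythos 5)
Your proposal follows essentially the same route as the paper's own proof: both use Theorem \ref{comType} together with Proposition \ref{domain} to recover $\mathcal{D}(S)=\mathcal{A}_S^F\mathcal{X}$ as a similarity invariant (a similarity $B$ carries $\mathcal{A}_{S_1}^F$ onto $\mathcal{A}_{S_2}^F$, hence $\mathcal{D}(S_1)$ onto $\mathcal{D}(S_2)$), and both then reduce the problem to exhibiting a continuum of dense operator ranges, each expressible as $\mathcal{M}+\mathcal{N}$ for a generic pair, that are pairwise inequivalent under bounded invertible operators. The only substantive difference is that where you defer to ``Fillmore--Williams type results'' for the continuum of inequivalent ranges, the paper constructs it explicitly (the ranges $\mathcal{L}_{(H_k)}$ with $\dim H_k=[k^t]$ for $k>0$ and $\dim H_0=\infty$, distinguished via the Fillmore--Williams growth criterion and an asymptotic estimate), a detail your sketch would still need to supply.
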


\begin{proof}
We will say that two linear subspaces $\mathcal{Y}_{1}, \mathcal{Y}_{2}$ of
Banach spaces $\mathcal{X}_{1}, \mathcal{X}_{2}$ are isomorphic, if there is a
bounded invertible operator $T: \mathcal{X}_{1} \to\mathcal{X}_{2}$ with
$T\mathcal{Y}_{1} = \mathcal{Y}_{2}$. Let us show that if partial complex
structures $S_{1}, S_{2}$ are such that algebras $\mathcal{A}_{S_{1}}$ and
$\mathcal{A}_{S_{2}}$ are similar then their domains $\mathcal{D}(S_{1}),
\mathcal{D}(S_{2})$ are isomorphic.

Indeed if $T\mathcal{A}_{S_{1}}T^{-1}=\mathcal{A}_{S_{2}}$, for some bounded
invertible operator $T$, then
\[
T(\mathcal{A}_{S_{1}}\cap\mathcal{F}(\mathcal{X}_{1}))T^{-1}=\mathcal{A}%
_{S_{2}}\cap\mathcal{F}(\mathcal{X}_{2}).
\]
Thus
\[
T(\mathcal{A}_{S_{1}}\cap\mathcal{F}(\mathcal{X}_{1}))\mathcal{X}%
_{1}=\mathcal{A}_{S_{2}}^{F}\mathcal{X}_{2}.
\]
By Proposition \ref{domain}, $T\mathcal{D}(S_{1})=\mathcal{D}(S_{2})$.

Now it remains to find in a real Hilbert space $\mathcal{H}$ a continuum of
closed PCS's whose domains are pairwise non-isomorphic.

Using the complex structure in $\mathcal{H}$ defined by the operator $S_{I}$,
we may assume that $\mathcal{H}$ is a complex space. The construction
described before the corollary, relates any generic pair $\mathcal{M}%
,\mathcal{N}$ of closed subspaces in $\mathcal{H}$ to the PCS
$S=S_{\mathcal{M},\mathcal{N}}$ with $\mathcal{D}(S)=\mathcal{M}+\mathcal{N}$.
So we will look for non-isomorphic subspaces of this form.

Recall that a linear subspace $\mathcal{L}$ of $\mathcal{H}$ is called an
\textit{operator range} if there is a Hilbert space $\mathcal{H}^{\prime}$ and
a bounded operator $W: \mathcal{H}^{\prime}\to\mathcal{H}$ with $W\mathcal{H}%
^{\prime}= \mathcal{L}$. It is easy to see that any domain of a closed
operator is an operator range.

Furthermore it is known (see \cite[Theorems 5.1, 5,11]{Dixm}, \cite[Theorem
2.6]{FW}) that any dense operator range $\mathcal{L}\varsubsetneq\mathcal{H}$
that contains an infinite-dimensional closed subspace of $\mathcal{H}$ can be
presented in the form $\mathcal{M}+\mathcal{N}$, for some generic pair
$\mathcal{M},\mathcal{N}$.

To any sequence $(H_{k})_{k=0}^{\infty}$ of pairwise orthogonal subspaces of
$\mathcal{H}$ such that $\mathcal{H}=\oplus_{k=0}^{\infty}H_{k}$ there
corresponds a dense linear subspace
\begin{equation}
\mathcal{L}_{(H_{k})}=\{\sum_{k=0}^{\infty}x_{k}:x_{k}\in H_{k},\sum
_{k=0}^{\infty}2^{k}\Vert x_{k}\Vert<\infty\}. \label{DimSum}%
\end{equation}
It is easy to show that $\mathcal{L}_{(H_{k})}$ is an operator range:
$\mathcal{L}_{(H_{k})}=T\mathcal{H}$, where $T=\sum_{k=0}^{\infty}%
2^{-k}P_{H_{k}}$. It was proved in \cite[Theorem 3.3]{FW} that if
$(K_{k})_{k=0}^{\infty}$ is another sequence of pairwise orthogonal subspaces,
then the subspaces $\mathcal{L}_{(H_{k})}$ and $\mathcal{L}_{(K_{k})}$ are
isomorphic if and only if the dimensions of $H_{k}$ and $K_{k}$ satisfy the
following condition:%

\begin{equation}
\text{ there is }p\in\mathbb{N}\text{ such that}\sum_{k=n}^{m}\dim H_{k}%
\leq\sum_{k=n-p}^{m+p}\dim K_{k},\text{ for any pair }(n,m)\text{ with }n<m,
\label{inneq}%
\end{equation}
and dually (it is assumed that $H_{k}=K_{k}=0$ if $k<0$).

Now for any $t>1$, we denote by $\mathcal{L}(t)$ a subspace of the form
$\mathcal{L}_{(H_{k})}$, where $\dim H_{k}=[k^{t}]:=\max\{n\in\mathbb{N}:n\leq
k^{t}\}$, for all $k>0$, while $\dim H_{0}=\infty$. The last condition
guarantees that $\mathcal{L}(t)$ contains an infinite-dimensional closed
subspace and therefore is a sum of two subspaces forming a generic pair. Let
us check that $\mathcal{L}(t)$ and $\mathcal{L}(r)$ are not isomorphic, if
$t\neq r$. Indeed, if $t>r$ and (\ref{inneq}) holds, for some $p$ and all
$m,n$, then choosing $n=p+1$ we obtain that
\begin{equation}
\sum_{k=p+1}^{m}[k^{t}]\leq\sum_{k=1}^{m+p}[k^{r}],\text{ for all }m.
\label{asymp}%
\end{equation}
However, since $a-1<[a]\leq a$, the left hand side of (\ref{asymp}) , when
$m\rightarrow\infty$, is asymptotically equivalent to
\[
\sum_{k=p+1}^{m}k^{t}\sim\int_{0}^{m}x^{t}dx=\frac{m^{t+1}}{t+1},
\]
while the right hand side, similarly, is asymptotically equivalent to
\[
\frac{(m+p)^{r+1}}{r+1}\sim\frac{m^{r+1}}{r+1}.
\]
So the inequality (\ref{inneq}) contradicts the condition $t>r$.
\end{proof}

\section{Lomonosov algebras of quaternion type}

If $\mathcal{A}$ is an $L$-algebra of quaternion type then, by definition,
there is an isomorphism $\pi_{\mathcal{A}}$ of the algebra $\mathbb{H}$ onto
$\mathbb{D}$, the commutant of $\mathcal{A}^{F}|_{\mathcal{X}^{F}}$. It can be
considered as a representation of $\mathbb{H}$ on the space $\mathcal{X}^{F}$.
If $G_{\mathbb{H}}\subset\mathbb{H}$ is the quaternion group
\[
G_{\mathbb{H}}=\{\pm1,\pm i.\pm j,\pm k:(-1)^{2}=1,\text{ }i^{2}=j^{2}%
=k^{2}=-1,\text{ }ijk=-1
\]
then the restriction of $\pi_{\mathcal{A}}$ to $G_{\mathbb{H}}$ will also be
denoted by $\pi_{\mathcal{A}}$.

It will be convenient to begin the study of this representation in a more
general context.

Let $G$ be a finite group with the unit element $e_{G}$, and let $\pi$ be a
representation of $G$ by linear operators on a dense linear subspace
$\mathcal{E}$ of a Banach space $\mathcal{X}$. The subspace $\mathcal{E}$ is
called \textit{the domain} of $\pi$. We will write $\mathcal{E}_{\pi}$ instead
of $\mathcal{E}$ when it will be necessary to underline that $\mathcal{E}$ is
the domain of $\pi$.

We say that $\pi$ is \textit{closed} if, whenever
\begin{equation}
\pi(g)x_{n}\rightarrow w(g)\in\mathcal{X}\text{ for all }g\in G\text{ and some
sequence }(x_{n})_{n=1}^{\infty}\text{ in }\mathcal{E}_{\pi}, \label{cloclo}%
\end{equation}
then $w(g)=\pi(g)x$, for some $x\in\mathcal{E}_{\pi}$ and all $g\in G$.

A representation $\pi$ is called \textit{closable} if, whenever a function $w:
G\to\mathcal{X}$ satisfies (\ref{cloclo}), the condition $w(e_{G})=0$ implies
$w(g)=0$, for all $g\in G.$

Let $C(G,\mathcal{X})$ be the Banach space of all $\mathcal{X}$-valued
functions on $G$ with the norm $\Vert F\Vert=\sup_{g\in G}\Vert F(g)\Vert$. We
denote by $\Gamma(\pi)$ the subset of the space $C(G,\mathcal{X})$ that
consists of all functions $F(g)=\pi(g)x$ where $x\in\mathcal{E}$.

\begin{lemma}
The following conditions are equivalent\emph{:\smallskip}

\emph{(i)} $\pi$ is closed\emph{;\smallskip}

\emph{(ii)} $\mathcal{E}$ is complete with respect to the norm $\Vert
x\Vert_{G}=\sum_{g\in G}\Vert\pi(g)x\Vert$\emph{;}\smallskip

\emph{(iii)} $\Gamma(\pi)$ is closed in $C(G,\mathcal{X})$.
\end{lemma}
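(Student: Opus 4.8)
The plan is to prove the three conditions equivalent by establishing the cyclic chain (i) $\Rightarrow$ (iii) $\Rightarrow$ (ii) $\Rightarrow$ (i), leaning throughout on the observation that $\Gamma(\pi)$ is precisely the image of $\mathcal{E}$ under the linear map $J\colon x\mapsto F_{x}$, where $F_{x}(g)=\pi(g)x$, and that $\|Jx\| = \max_{g}\|\pi(g)x\|$ is equivalent (up to the constant $|G|$) to the graph-type norm $\|x\|_{G}=\sum_{g}\|\pi(g)x\|$ appearing in (ii). Since $e_{G}\in G$, both of these quantities dominate $\|x\|=\|\pi(e_{G})x\|$, so $J$ is injective and the graph norm is genuinely a norm; this is the structural fact underlying all three conditions.

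First I would unwind the definitions so that condition (\ref{cloclo}) reads as: the sequence $(F_{x_{n}})$ converges in $C(G,\mathcal{X})$ to some function $w$. Then closedness of $\pi$ (i) says exactly that every such limit $w$ already lies in $\Gamma(\pi)$, i.e. $w=F_{x}$ for some $x\in\mathcal{E}$ — which is verbatim the statement that $\Gamma(\pi)$ is closed in $C(G,\mathcal{X})$, giving (i) $\Leftrightarrow$ (iii) immediately. For (iii) $\Rightarrow$ (ii), I would note that $C(G,\mathcal{X})$ is complete (a finite product of copies of the Banach space $\mathcal{X}$), so the closed subspace $\Gamma(\pi)$ is itself a Banach space; transporting this completeness back along the isometry-up-to-constants $J$ shows $(\mathcal{E},\|\cdot\|_{G})$ is complete. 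Here I must first check that $J$ is well-defined and injective so that the norm $\|\cdot\|_{G}$ is transported correctly: injectivity follows since $\|x\|_{G}\ge\|x\|$ forces $\|\cdot\|_{G}$ to be a norm, and a Cauchy sequence in $\|\cdot\|_{G}$ maps to a Cauchy sequence in $\Gamma(\pi)$ whose limit, being in $\Gamma(\pi)$, is $F_{x}$ for some $x$, and then $x_{n}\to x$ in $\|\cdot\|_{G}$.

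For the remaining implication (ii) $\Rightarrow$ (i), suppose $\mathcal{E}$ is $\|\cdot\|_{G}$-complete and that a sequence $(x_{n})\subset\mathcal{E}$ satisfies $\pi(g)x_{n}\to w(g)$ for every $g\in G$. Because convergence holds simultaneously at all $g\in G$ and $G$ is finite, $(x_{n})$ is Cauchy in the norm $\|\cdot\|_{G}$; by completeness it converges to some $x\in\mathcal{E}$ in that norm, and since $\|\cdot\|_{G}$ dominates each $\|\pi(g)\cdot\|$, we get $\pi(g)x_{n}\to\pi(g)x$ for every $g$, whence $w(g)=\pi(g)x$ for all $g$, which is closedness. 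I expect the only genuinely delicate point, and hence the main obstacle, to be the careful matching of the graph norm $\|x\|_{G}=\sum_{g}\|\pi(g)x\|$ with the sup-norm on $C(G,\mathcal{X})$: these are only equivalent, not equal, so one must be attentive that completeness and the Cauchy property are preserved under the passage between them — but since $G$ is finite these are Lipschitz-equivalent norms and the equivalence is harmless. Everything else is a routine unwinding of the definition (\ref{cloclo}) and of the image of the graph map $J$.
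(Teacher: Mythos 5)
Your proposal is correct and rests on exactly the same ingredients as the paper's proof: the graph map $x\mapsto(g\mapsto\pi(g)x)$ as a topological isomorphism of $(\mathcal{E},\Vert\cdot\Vert_{G})$ onto $\Gamma(\pi)$ (with finiteness of $G$ making the sup and sum norms equivalent), completeness of $C(G,\mathcal{X})$, and routine Cauchy-sequence transport. The paper merely arranges the implications differently --- (ii) $\Leftrightarrow$ (iii) via the isomorphism, (i) $\Rightarrow$ (ii) by the Cauchy argument, (iii) $\Rightarrow$ (i) by unwinding definitions --- so your cycle (i) $\Rightarrow$ (iii) $\Rightarrow$ (ii) $\Rightarrow$ (i) is only a cosmetic rearrangement of the same argument.
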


\begin{proof}
(ii) $\Leftrightarrow$ (iii) follows from the fact that the map $x\mapsto
F(g)=\pi(g)x$ is a topological isomorphism between $(\mathcal{E},\|\cdot
\|_{G})$ and $\Gamma(\pi)$.

(i) $\Rightarrow$ (ii). Let $x_{n}$ be a Cauchy sequence in $(\mathcal{E}%
,\Vert\cdot\Vert_{G})$. Since $\Vert\pi(g)x\Vert\leq\Vert x\Vert_{G}$, the
sequence $\mathbf{\{}\pi(g)x_{n}\}$ , for any $g\in G$, is a Cauchy sequence
in $\mathcal{X}$. Hence, for each $g\in G$ there is $w(g)\in\mathcal{X}$ with
$\pi(g)x_{n}\rightarrow w(g)$. By (i), there is $x\in\mathcal{E}$ with
$w(g)=\pi(g)x$. Therefore $\Vert x_{n}-x\Vert_{G}\rightarrow0$.

(iii) $\Rightarrow$ (i). If $\pi(g)y_{n}\rightarrow w(g)$ for all $g$, then
the function $F(g)=w(g)$ belongs to the closure of $\Gamma(\pi)$. Since
$\Gamma(\pi)$ is closed, $F(g)$ belongs to $\Gamma(\pi)$. So there is
$y\in\mathcal{E}$ with $w(g)=\pi(g)y$.\bigskip
\end{proof}

Let $\overline{\Gamma}$ be the closure of $\Gamma(\pi)$ in $C(G,\mathcal{X})$.
It is easy to see that $\Gamma(\pi)$ and $\overline{\Gamma}$ are linear
subspaces of $C(G,\mathcal{X})$ and
\begin{equation}
F\in\overline{\Gamma}\text{ if there are }x_{n}\in\mathcal{E}_{\pi}\text{ such
that }F(g)=\underset{n\rightarrow\infty}{\lim}\pi(g)x_{n}\text{ for all }g\in
G. \label{3.5}%
\end{equation}
Setting $\mathcal{Z}=\{F(e_{G}):F\in\overline{\Gamma}\},$ we see that
$\mathcal{Z}$ is a linear subspace of $\mathcal{X}$ containing $\mathcal{E}%
_{\pi}$.

\begin{lemma}
\label{closure}If $\pi$ is closable then $\overline{\Gamma}=\Gamma(\rho)$
where $\rho$ is a closed representation of $G$ on $\mathcal{Z}$.
\end{lemma}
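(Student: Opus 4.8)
The plan is to use the closure $\overline{\Gamma}$ itself as the graph of the representation we are after: I will define $\rho$ so that $\Gamma(\rho)=\overline{\Gamma}$ exactly, after which closedness of $\rho$ is automatic. Indeed, by the equivalence (i) $\Leftrightarrow$ (iii) of the preceding lemma, $\rho$ is closed precisely when $\Gamma(\rho)$ is closed in $C(G,\mathcal{X})$, and $\overline{\Gamma}$ is closed by construction. So the entire task reduces to manufacturing a well-defined representation $\rho$ on $\mathcal{Z}$ whose graph is $\overline{\Gamma}$.

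The first ingredient I would record is that $\overline{\Gamma}$ is invariant under the right translations $R_{h}\colon F\mapsto F(\,\cdot\,h)$. On $\Gamma(\pi)$ this is transparent: if $F(g)=\pi(g)x$ then $(R_{h}F)(g)=\pi(gh)x=\pi(g)\bigl(\pi(h)x\bigr)$, so $R_{h}F\in\Gamma(\pi)$ and corresponds to the vector $\pi(h)x\in\mathcal{E}_{\pi}$. Since $G$ is finite, $R_{h}$ merely permutes coordinates and is an isometry of $C(G,\mathcal{X})$; a continuous map carrying $\Gamma(\pi)$ into itself carries its closure into its closure, hence $R_{h}\overline{\Gamma}\subseteq\overline{\Gamma}$.

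Next I define $\rho$. Given $z\in\mathcal{Z}$, choose any $F\in\overline{\Gamma}$ with $F(e_{G})=z$ and set $\rho(g)z=F(g)$. This is the point where closability enters, and it is the only real content of the argument: if $F_{1},F_{2}\in\overline{\Gamma}$ agree at $e_{G}$, then $F=F_{1}-F_{2}\in\overline{\Gamma}$ (a linear subspace) satisfies $F(e_{G})=0$ and, by (\ref{3.5}), has the form $F(g)=\lim_{n}\pi(g)x_{n}$; closability then forces $F(g)=0$ for all $g$, so $F_{1}=F_{2}$ and $\rho(g)z$ is unambiguous. Linearity of each $\rho(g)$ is then immediate from linearity of $\overline{\Gamma}$ and of evaluation. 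The translation invariance from the previous step does the rest: since $(R_{g}F)(e_{G})=F(g)$ and $R_{g}F\in\overline{\Gamma}$, we get $F(g)\in\mathcal{Z}$, so $\rho(g)$ maps $\mathcal{Z}$ into $\mathcal{Z}$; and applying the recipe to $R_{h}F$, whose value at $e_{G}$ is $\rho(h)z$, yields $\rho(g)\rho(h)z=(R_{h}F)(g)=F(gh)=\rho(gh)z$, while $\rho(e_{G})z=F(e_{G})=z$. Because $\mathcal{Z}\supseteq\mathcal{E}_{\pi}$ is dense, $\rho$ is a genuine representation.

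Finally, by the very definition of $\rho$ the function $g\mapsto\rho(g)z$ is precisely the chosen $F$, so $\Gamma(\rho)=\overline{\Gamma}$; being closed, it forces $\rho$ to be closed via (i) $\Leftrightarrow$ (iii) above. I do not anticipate any genuine obstacle beyond the correct invocation of closability for well-definedness, which is the crux; the homomorphism identity is the one spot demanding care, but the right-translation invariance of $\overline{\Gamma}$ dispatches it cleanly.
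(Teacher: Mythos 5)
Your proof is correct and follows essentially the same route as the paper's: define $\rho(g)z=F(g)$ for any $F\in\overline{\Gamma}$ with $F(e_{G})=z$, using closability (via (\ref{3.5})) for well-definedness and linearity, then get invariance of $\mathcal{Z}$ and the identity $\rho(gh)=\rho(g)\rho(h)$ from right-translation invariance of $\overline{\Gamma}$, and conclude closedness from $\Gamma(\rho)=\overline{\Gamma}$. The only difference is cosmetic: you obtain $R_{h}\overline{\Gamma}\subseteq\overline{\Gamma}$ abstractly, from the fact that $R_{h}$ is a continuous isometry preserving $\Gamma(\pi)$, whereas the paper verifies the same fact concretely by passing to the approximating sequence $t_{n}=\pi(h)x_{n}$.
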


\begin{proof}
Firstly, we claim that, for each $g\in G$, there is a linear operator
$\rho(g)$: $\mathcal{Z}\rightarrow\mathcal{X}$ with
\begin{equation}
F(g)=\rho(g)F(e_{G})\text{ for each function }F\in\overline{\Gamma}.
\label{3.7}%
\end{equation}
Indeed, let $F\in\overline{\Gamma}.$ By (\ref{3.5}), there is a sequence
$\{x_{n}\}$ in $\mathcal{E}_{\pi}$ with $F(g)=\lim_{n}\pi(g)x_{n}$. So the
assumption of closability of $\pi$ implies that if $F(e_{G})=0$ then $F(g)=0$,
for all $g\in G$. Since the maps $F\mapsto F(e_{G})\in\mathcal{Z}$ and
$F\mapsto F(g)$ are linear, this shows that $F(g)$ depends on $F(e_{G})$
linearly and our claim is proved.

Now we have to show that $\rho(g)\mathcal{Z}\subset\mathcal{Z}$ and
$\rho(gh)=\rho(g)\rho(h)$, for all $g,h\in G$.

Let $x\in\mathcal{Z}$. Then there is $F\in\overline{\Gamma}$ with
$F(e_{G})=x.$ So, by (\ref{3.5}), there are $x_{n}\in\mathcal{E}_{\pi}$ such
that
\begin{equation}
\pi(g)x_{n}\rightarrow F(g)=\rho(g)F(e_{G})=\rho(g)x\text{ for all }g\in G.
\label{3.4}%
\end{equation}
Let $t_{n}=\pi(h)x_{n}$ for some $h\in G$. Set $F_{h}(g)=F(gh).$ Then
$t_{n}\in\mathcal{E}_{\pi}$ and, by (\ref{3.4}),
\begin{equation}
t_{n}\rightarrow\rho(h)x\text{ and }\pi(g)t_{n}=\pi(gh)x_{n}\rightarrow
\rho(gh)x=F(gh)=F_{h}(g). \label{3.6}%
\end{equation}
It follows from (\ref{3.5}) and (\ref{3.6}) that $F_{h}\in\overline{\Gamma}.$
Hence $\rho(h)x=F(h)=F_{h}(e_{G})\in\mathcal{Z}$ and%
\[
\rho(gh)x=F(gh)=F_{h}(g)\overset{(\ref{3.7})}{=}\rho(g)F_{h}(e_{G}%
)=\rho(g)\rho(h)x
\]
Since also $\rho(e_{G})x=x$, we have that $g\mapsto\rho(g)$ is a
representation of $G$ on $\mathcal{Z}$.

Since $\Gamma(\rho)=\overline{\Gamma(\pi)}$, the representation $\rho$ is
closed.\bigskip
\end{proof}

The representation $\rho$ defined in the proof of Lemma \ref{closure} is
called \textit{the closure of} $\pi$ and denoted by $\overline{\pi}$.

For a representation $\pi$ of $G$ on $\mathcal{E}$, we set
\begin{equation}
\mathcal{E}^{\ast}=\{f\in\mathcal{X}^{\ast}\text{: }|f(\pi(g)x)|\leq C\Vert
x\Vert,\text{ for some }C>0\text{ and all }x\in\mathcal{E},g\in G\}.
\label{dualspace}%
\end{equation}
We call a representation $\pi$ \textit{regular} if the space $\mathcal{E}%
^{\ast}$ is weak*-dense in $\mathcal{X}^{\ast}$.

\begin{lemma}
\label{RegClos}\emph{(i) }Every regular representation is closable.\smallskip

\emph{(ii)} The closure of a regular representation is regular.
\end{lemma}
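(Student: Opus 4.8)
The plan is to exploit that regularity supplies a weak*-dense family of functionals on which the growth bound (\ref{dualspace}) is uniform, turning norm-convergence statements into scalar vanishing statements. For (i), I would start from a sequence $(x_{n})\subset\mathcal{E}_{\pi}$ satisfying (\ref{cloclo}), so $\pi(g)x_{n}\to w(g)$ for all $g\in G$, together with $w(e_{G})=0$; since $\pi(e_{G})$ is the identity, $w(e_{G})=\lim_{n}x_{n}$, so this hypothesis says precisely that $x_{n}\to 0$. Now fix $f\in\mathcal{E}^{\ast}$ with associated constant $C$ as in (\ref{dualspace}). Then $|f(\pi(g)x_{n})|\le C\|x_{n}\|\to 0$, while continuity of $f$ and $\pi(g)x_{n}\to w(g)$ give $f(\pi(g)x_{n})\to f(w(g))$; hence $f(w(g))=0$ for every $f\in\mathcal{E}^{\ast}$ and every $g\in G$. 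To upgrade this to $w(g)=0$, I would use that evaluation at the fixed vector $w(g)$ is weak*-continuous on $\mathcal{X}^{\ast}$: since it vanishes on the weak*-dense set $\mathcal{E}^{\ast}$, it vanishes on all of $\mathcal{X}^{\ast}$, forcing $w(g)=0$. This is the conceptual heart of the argument and the only place where the full strength of regularity is genuinely used.

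For (ii), I would first invoke part (i) to know that $\pi$ is closable, so its closure $\rho=\overline{\pi}$ on $\mathcal{Z}$ is well defined by Lemma \ref{closure}. The key claim is the inclusion $\mathcal{E}^{\ast}\subseteq\mathcal{Z}^{\ast}$, where $\mathcal{Z}^{\ast}$ is built from $\rho$ exactly as $\mathcal{E}^{\ast}$ is built from $\pi$ in (\ref{dualspace}). To prove it, fix $f\in\mathcal{E}^{\ast}$ with constant $C$ and an arbitrary $x\in\mathcal{Z}$. By (\ref{3.5}) there are $x_{n}\in\mathcal{E}_{\pi}$ with $\pi(g)x_{n}\to\rho(g)x$ for all $g$; taking $g=e_{G}$ shows $x_{n}\to x$, so $\|x_{n}\|\to\|x\|$. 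Passing to the limit in $|f(\pi(g)x_{n})|\le C\|x_{n}\|$ yields $|f(\rho(g)x)|\le C\|x\|$ for all $g$, i.e. $f\in\mathcal{Z}^{\ast}$. Since $\mathcal{E}^{\ast}$ is weak*-dense and contained in $\mathcal{Z}^{\ast}$, the latter is weak*-dense as well, which is exactly the regularity of $\rho$.

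I expect both halves to be short once these quantities are in play; neither needs the operator-range or group-theoretic machinery of the later sections. The single point deserving care is the weak*-density/weak*-continuity step in (i), where regularity genuinely enters: without weak*-density one would only conclude $f(w(g))=0$ on a subspace of $\mathcal{X}^{\ast}$ that need not separate points of $\mathcal{X}$. The limiting argument in (ii) is entirely routine, its only wrinkle being the bookkeeping observation that the same approximating sequence witnessing $\rho(g)x$ also converges to $x$ at $g=e_{G}$, which is what lets the bound pass to the limit with the correct constant.
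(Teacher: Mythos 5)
Your proposal is correct and follows essentially the same route as the paper's own proof: in (i) you use the uniform bound from (\ref{dualspace}) to get $f(w(g))=0$ on the weak*-dense set $\mathcal{E}^{\ast}$ and conclude $w(g)=0$, and in (ii) you show $\mathcal{E}^{\ast}$ is contained in the analogous dual space for the closure by passing the bound to the limit along an approximating sequence. The only differences are cosmetic (your $\mathcal{Z}^{\ast}$ is the paper's $\mathcal{E}_{\overline{\pi}}^{\ast}$, and you spell out the weak*-continuity step that the paper leaves implicit).
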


\begin{proof}
Let $\pi$ be a regular representation of a group $G$.

(i) If $(x_{n})_{n=1}^{\infty}\in\mathcal{E}_{\pi}$, $\lim\pi(g)x_{n}=w(g)$
and $w(e_{G})=0$, then $\Vert x_{n}\Vert\rightarrow0$ and therefore
$|f(\pi(g)x_{n})|\leq C\Vert x_{n}\Vert\rightarrow0$, for any $g\in G$ and any
$f\in\mathcal{E}^{\ast}$. It follows that $f(w(g))=\lim f(\pi(g)x_{n})=0.$
Hence $w(g)=0,$ since by our assumptions $\mathcal{E}^{\ast}$ is weak*-dense
in $\mathcal{X}^{\ast}$. We proved (i).

(ii) Let $\mathcal{E}_{\overline{\pi}}$ be the domain of $\overline{\pi}$. To
prove (ii) it suffices to show that $\mathcal{E}^{\ast}\subset\mathcal{E}%
_{\overline{\pi}}^{\ast}$, that is, $|f(\overline{\pi}(g)x)|\leq C\Vert
x\Vert$, for all $g\in G$, $f\in\mathcal{E}_{\overline{\pi}}^{\ast}$ and
$x\in\mathcal{E}_{\overline{\pi}}$.

Let $x\in\mathcal{E}_{\overline{\pi}}$. By (\ref{3.4}), there are $x_{n}%
\in\mathcal{E}_{\pi}$ with $x_{n}\rightarrow x$ and $\pi(g)x_{n}%
\rightarrow\overline{\pi}(g)x$. Therefore $f(\overline{\pi}(g)x)=\lim
f(\pi(g)x_{n})$, so that $|f(\overline{\pi}(g)x)|\leq\lim|f(\pi(g)x_{n}%
)|\leq\limsup C\Vert x_{n}\Vert=C\Vert x\Vert$.\bigskip
\end{proof}

For each $g\in G$ and each $f\in\mathcal{E}^{\ast}$, the map $x\mapsto
f(\pi(g^{-1})x)$ is a bounded linear functional on $\mathcal{E}_{\pi}$. The
extension of this functional to $\mathcal{X}$ by continuity will be denoted by
$\pi^{\ast}(g)f$. It is easy to check that in this way we define a
representation of $G$ on $\mathcal{E}^{\ast}$; we will denote this
representation by $\pi^{\ast}$.

For every operator $K$ acting on $\mathcal{E}_{\pi}$, its "mean"\ $M_{G}(K)$
is defined by the formula
\begin{equation}
M_{G}(K)=\sum_{g\in G}\pi(g)K\pi(g^{-1}). \label{mean}%
\end{equation}
It is easy to check that $M_{G}(K)$ commutes with all operators $\pi(g)$.

In particular, for $y\in\mathcal{E}_{\pi}$ and $f\in\mathcal{E}^{\ast}$, we
set
\begin{equation}
T_{y,f}=M_{G}(y\otimes f)=\sum_{g\in G}\pi(g)(y\otimes f)\pi(g^{-1}%
)=\sum_{g\in G}\pi(g)y\otimes\pi^{\ast}(g)f, \label{mean1}%
\end{equation}
where as usual $y\otimes f$ is the rank one operator on $\mathcal{E}$ acting
by the rule
\[
(y\otimes f)(x)=f(x)y.
\]

Since the operators $\pi(g)y\otimes\pi^{\ast}(g)f$ are in fact defined and
bounded on $\mathcal{X}$ we may (and will) consider $T_{y,f}$ as an operator
on $\mathcal{X}$. Clearly $\mathrm{rank}(T_{y,f})\le|G|$ so
\[
T_{y,f}\in\mathcal{F}(\mathcal{X}).
\]

If $\pi$ is a representation of a group $G$ on a dense subspace $\mathcal{E}$
of a real Banach space $\mathcal{X}$ then we denote by $\mathcal{A}_{\pi}$ the
set of all operators $T\in B(\mathcal{X})$ that preserve $\mathcal{E}$ and
commute with all operators $\pi(g)$ on $\mathcal{E}$:
\[
\mathcal{A}_{\pi}=\{T\in B(\mathcal{X})\text{: }T\mathcal{E\subseteq E}\text{
and }T\pi(g)|_{\mathcal{E}}=\pi(g)T|_{\mathcal{E}}\text{ for }g\in G\}.
\]

\begin{proposition}
\label{predst} If a representation $\pi$ is closed then $\mathcal{A}_{\pi}$ is
a \emph{(SOT)}-closed algebra of operators on $\mathcal{X}$.
\end{proposition}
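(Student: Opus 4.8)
The plan is to verify the two assertions separately --- that $\mathcal{A}_{\pi}$ is an algebra, and that it is (SOT)-closed. The first is routine and uses only the defining conditions; the second is where the hypothesis that $\pi$ is closed enters in an essential way.

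First I would check that $\mathcal{A}_{\pi}$ is an algebra. Closure under linear combinations is immediate, since both defining conditions ($T\mathcal{E}\subseteq\mathcal{E}$ and $T\pi(g)|_{\mathcal{E}}=\pi(g)T|_{\mathcal{E}}$) are preserved by linear combinations. For products, if $T,S\in\mathcal{A}_{\pi}$ then $TS\mathcal{E}\subseteq T\mathcal{E}\subseteq\mathcal{E}$, and for $x\in\mathcal{E}$ and $g\in G$ one has $TS\pi(g)x=T\pi(g)Sx=\pi(g)TSx$ (using $Sx\in\mathcal{E}$ in the last step), so $TS\in\mathcal{A}_{\pi}$. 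None of this needs closedness of $\pi$.

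The substantive part is (SOT)-closedness. Suppose $T_{n}\in\mathcal{A}_{\pi}$ and $T_{n}\overset{sot}{\rightarrow}T\in B(\mathcal{X})$; I must show $Tx\in\mathcal{E}$ and $T\pi(g)x=\pi(g)Tx$ for every $x\in\mathcal{E}$ and $g\in G$. Fix such an $x$ and set $y_{n}=T_{n}x$. Since $T_{n}\in\mathcal{A}_{\pi}$, each $y_{n}$ lies in $\mathcal{E}=\mathcal{E}_{\pi}$, and the commutation relation gives $\pi(g)y_{n}=\pi(g)T_{n}x=T_{n}\pi(g)x$ for every $g\in G$. Because $\pi(g)x\in\mathcal{X}$ and $T_{n}\overset{sot}{\rightarrow}T$, the right-hand side converges: $\pi(g)y_{n}=T_{n}\pi(g)x\rightarrow T\pi(g)x=:w(g)$, for every $g\in G$ simultaneously. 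Thus $(y_{n})\subset\mathcal{E}_{\pi}$ meets exactly the hypothesis (\ref{cloclo}) of closedness, with limit function $w(g)=T\pi(g)x$. Invoking that $\pi$ is closed produces $y\in\mathcal{E}_{\pi}$ with $w(g)=\pi(g)y$ for all $g$. Reading this at $g=e_{G}$, and noting $w(e_{G})=Tx$ (since $y_{n}=\pi(e_{G})y_{n}\rightarrow Tx$), identifies $y=Tx$; in particular $Tx\in\mathcal{E}$. For arbitrary $g$ the equality $w(g)=\pi(g)y$ then reads $T\pi(g)x=\pi(g)Tx$, the required commutation. Hence $T\in\mathcal{A}_{\pi}$.

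The only delicate point, and the step I would be most careful about, is confirming that $(y_{n})$ genuinely satisfies the closedness hypothesis: one needs $y_{n}\in\mathcal{E}_{\pi}$ (which comes from $T_{n}\mathcal{E}\subseteq\mathcal{E}$) and convergence of $\pi(g)y_{n}$ for \emph{all} $g$ at once (which comes from rewriting $\pi(g)y_{n}$ as $T_{n}\pi(g)x$ and using strong convergence of $T_{n}$ at each vector $\pi(g)x$). Once the hypothesis of closedness is correctly assembled, the conclusion $T\in\mathcal{A}_{\pi}$ drops out, and everything else is bookkeeping.
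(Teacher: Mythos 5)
Your proof is correct and follows essentially the same route as the paper: apply the closedness hypothesis to the sequence $y_n=T_nx$, using the commutation relation to rewrite $\pi(g)y_n=T_n\pi(g)x\to T\pi(g)x$, then evaluate at $g=e_G$ to identify the resulting $y\in\mathcal{E}_{\pi}$ with $Tx$. The only difference is that you spell out the algebra verification and the assembly of hypothesis (\ref{cloclo}), which the paper dismisses as clear.
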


\begin{proof}
Clearly, $\mathcal{A}_{\pi}$ is an algebra. If $T_{n}\overset{sot}{\rightarrow
}T$, for some $T_{n}\in\mathcal{A}_{\pi}$ and $T\in\mathcal{B}(\mathcal{X})$,
then%
\[
\pi(g)T_{n}x=T_{n}\pi(g)x\rightarrow T\pi(g)x\text{ for any }x\in
\mathcal{E},g\in G.
\]
Since $\pi$ is closed on $\mathcal{E}$ and $T_{n}x\rightarrow Tx$,
$T\pi(g)x=\pi(g)y$ for some $y\in\mathcal{E}$. In particular, $Tx=y\in
\mathcal{E}$, so that $T$ preserves $\mathcal{E}$, and $T\pi(g)x=\pi(g)Tx$.
Thus $T$ commutes with $\pi(G)$ on $\mathcal{E}$. It follows that
$T\in\mathcal{A}_{\pi}$ and $\mathcal{A}_{\pi}$ is SOT-closed.\bigskip
\end{proof}

In what follows we consider only the quaternion group $G=G_{\mathbb{H}}$ and
those representations of $G_{\mathbb{H}}$ on a linear subspace $\mathcal{E}$
of $\mathcal{X}$ that satisfy the condition
\[
\pi(1)=-\pi(-1)=\mathbf{1}_{\mathcal{E}}.
\]
Any such representation extends to a representation of the algebra
$\mathbb{H}$: for $q=\alpha+\beta i+\gamma j+\delta k\in\mathbb{H}$, one sets%
\[
\pi(q)=\alpha\mathbf{1}_{\mathcal{E}}+\beta\pi(i)+\gamma\pi(j)+\delta\pi(k).
\]

One can obtain examples of closed representations of $G_{\mathbb{H}}$ as
follows. Let $\mathcal{X}=l^{2}$ with the standard basis $\{e_{n}%
\}_{n\in\mathbb{N}}$ and let $\mathcal{X}_{m}=span\{e_{4m-3},e_{4m-2}%
,e_{4m-1},e_{4m}\}$, $m=1,2,...$. We identify $\mathcal{X}$ with the
orthogonal direct sum of all $\mathcal{X}_{m}$. For each $m$, we choose a
linear bijection $\phi_{m}$: $\mathbb{H}\rightarrow\mathcal{X}_{m}$ and define
a representation $\pi_{m}$ of $\mathbb{H}$ on $\mathcal{X}_{m}$ by
\[
\pi_{m}(q)x=\phi_{m}(q\phi_{m}^{-1}(x))\text{ for } q\in\mathbb{H}\text{ and
}x\in\mathcal{X}_{m}.
\]

Let $s_{m}=\max_{g\in G_{\mathbb{H}}}\Vert\pi_{m}(g)\Vert$, and let
$\mathcal{E}$ be the set of all elements $x=\oplus_{m=1}^{\infty}x_{m}%
\in\mathcal{X}$ for which $\sum_{m}s_{m}^{2}\Vert x_{m}\Vert^{2}<\infty$. The
representation $\pi$ of $G_{\mathbb{H}}$ on $\mathcal{E}$ is defined as the
direct sum of representations $\pi_{m}$.

\begin{theorem}
\label{Api}If a representation $\pi$ of the group $G=G_{\mathbb{H}}$ on a
dense\textbf{ }subspace $\mathcal{E}\subset\mathcal{X}$ is closed and
regular\emph{,} then $\mathcal{A}_{\pi}$ is a Lomonosov algebra of quaternion
type on $\mathcal{X}$.
\end{theorem}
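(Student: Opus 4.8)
The plan is to establish the three defining properties of a Lomonosov algebra of quaternion type separately: that $\mathcal{A}_{\pi}$ is (SOT)-closed, that it is a transitive $L$-algebra (hence, being (SOT)-closed, Lomonosov), and that its associated division algebra $\mathbb{D}$ is $\mathbb{H}$. The (SOT)-closedness is already in hand: since $\pi$ is closed, Proposition \ref{predst} applies. Moreover the mean operators $T_{y,f}=M_{G}(y\otimes f)$ of (\ref{mean1}) are the finite-rank elements we shall use: $M_{G}(\cdot)$ commutes with every $\pi(g)$, and for $y\in\mathcal{E}$ one has $T_{y,f}\mathcal{X}\subseteq\mathrm{lin}\{\pi(g)y:g\in G\}\subseteq\mathcal{E}$, so $T_{y,f}\in\mathcal{A}_{\pi}^{F}$. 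Everything therefore reduces to choosing $f$ suitably, exactly as in the proof of Theorem \ref{comType}.

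For transitivity, fix $0\neq x_{0}\in\mathcal{X}$, $y_{0}\in\mathcal{X}$ and $\varepsilon>0$. Using $\pi(-1)=-\mathbf{1}_{\mathcal{E}}$, whence $\pi(-g)=-\pi(g)$ and $\pi^{\ast}(-g)=-\pi^{\ast}(g)$, I pair the $g$- and $(-g)$-summands in (\ref{mean1}) to obtain
\[
T_{y,f}\,x_{0}=2\Big(f(x_{0})\,y+\sum_{g\in\{i,j,k\}}(\pi^{\ast}(g)f)(x_{0})\,\pi(g)y\Big).
\]
The goal is to produce $f_{0}\in\mathcal{E}^{\ast}$ with $f_{0}(x_{0})\neq0$ and $(\pi^{\ast}(g)f_{0})(x_{0})=0$ for $g=i,j,k$; then $T_{u,f_{0}}x_{0}=2f_{0}(x_{0})u$, and taking $u\in\mathcal{E}$ with $\|u-(2f_{0}(x_{0}))^{-1}y_{0}\|$ small gives $\|T_{u,f_{0}}x_{0}-y_{0}\|<\varepsilon$ with $T_{u,f_{0}}\in\mathcal{A}_{\pi}^{F}$. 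This proves $\mathcal{A}_{\pi}$ is a transitive $L$-algebra.

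The existence of $f_{0}$ is the heart of the matter and the step I expect to be the main obstacle. Regarding $\Phi_{g}(f)=(\pi^{\ast}(g)f)(x_{0})$ as real-linear functionals of $f\in\mathcal{E}^{\ast}$, such an $f_{0}$ fails to exist only if $\Phi_{1}$ is a linear combination of $\Phi_{i},\Phi_{j},\Phi_{k}$, i.e. only if there is a pure-imaginary $q=\alpha i+\beta j+\gamma k$ with
\[
f(x_{0})=(\pi^{\ast}(q)f)(x_{0})\quad\text{for all }f\in\mathcal{E}^{\ast},
\]
where $\pi^{\ast}(q)=\alpha\pi^{\ast}(i)+\beta\pi^{\ast}(j)+\gamma\pi^{\ast}(k)$. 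To exclude this I would argue entirely on $\mathcal{E}^{\ast}$: it is invariant under each $\pi^{\ast}(g)$ (because $(\pi^{\ast}(g)f)(\pi(h)x)=f(\pi(g^{-1}h)x)$ is again controlled by $\|x\|$), $\pi^{\ast}$ restricts to a representation of $G_{\mathbb{H}}$, and the quaternion relations give $(\pi^{\ast}(q))^{2}=-|q|^{2}\mathbf{1}$ on $\mathcal{E}^{\ast}$. Writing $B=\pi^{\ast}(q)$, we get $(B-\mathbf{1})(B+\mathbf{1})=B^{2}-\mathbf{1}=-(1+|q|^{2})\mathbf{1}$, so $B-\mathbf{1}$ is invertible on $\mathcal{E}^{\ast}$ and $(B-\mathbf{1})\mathcal{E}^{\ast}=\mathcal{E}^{\ast}$. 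The displayed identity reads $((B-\mathbf{1})f)(x_{0})=0$ for all $f\in\mathcal{E}^{\ast}$, hence $h(x_{0})=0$ for every $h\in(B-\mathbf{1})\mathcal{E}^{\ast}=\mathcal{E}^{\ast}$; since $\pi$ is regular, $\mathcal{E}^{\ast}$ is weak*-dense, forcing $x_{0}=0$, a contradiction. Thus $f_{0}$ exists. This is precisely where both hypotheses enter: regularity for the weak*-density of $\mathcal{E}^{\ast}$, and the relation $q^{2}=-|q|^{2}$ for the invertibility of $B-\mathbf{1}$.

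Finally, for the type I would show $\mathcal{X}^{F}=\mathcal{E}$, as in Proposition \ref{domain}. The inclusion $\mathcal{X}^{F}\subseteq\mathcal{E}$ holds because any $T\in\mathcal{A}_{\pi}^{F}$ has $T\mathcal{X}=\overline{T\mathcal{E}}=T\mathcal{E}\subseteq\mathcal{E}$ (the middle equality since $\dim T\mathcal{E}<\infty$); conversely each $u\in\mathcal{E}$ lies in the range of $T_{u,f_{0}}$ by the identity $T_{u,f_{0}}x_{0}=2f_{0}(x_{0})u$, giving $\mathcal{E}\subseteq\mathcal{X}^{F}$. Consequently every $\pi(g)$ maps $\mathcal{X}^{F}=\mathcal{E}$ into itself and commutes with all of $\mathcal{A}_{\pi}^{F}$, so $\pi(g)\in\mathbb{D}$. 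Since $\pi(i)\pi(j)=\pi(k)=-\pi(j)\pi(i)$ with $\pi(k)\neq0$, the division algebra $\mathbb{D}$ is non-commutative; being finite-dimensional over $\mathbb{R}$ (Lemma \ref{finDim}), it is by Frobenius's theorem one of $\mathbb{R},\mathbb{C},\mathbb{H}$, hence necessarily $\mathbb{H}$. Therefore $\mathcal{A}_{\pi}$ is a Lomonosov algebra of quaternion type.
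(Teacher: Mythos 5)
Your proof is correct and takes essentially the same approach as the paper's: (SOT)-closedness via Proposition \ref{predst}, transitivity via the averaged finite-rank operators $T_{y,f}$ together with a functional $f_{0}\in\mathcal{E}^{\ast}$ annihilating $(\pi^{\ast}(g)f_{0})(x_{0})$ for $g=i,j,k$ but with $f_{0}(x_{0})\neq0$ (whose existence is forced by regularity and the invertibility of $\pi^{\ast}(1-q)$), and identification of the type through the commutant containing $\pi(\mathbb{H})$. The only deviations are cosmetic: you rescale $u$ instead of normalizing $f_{0}(x_{0})=\frac{1}{2}$, you verify invertibility of $B-\mathbf{1}$ by the direct computation $(B-\mathbf{1})(B+\mathbf{1})=-(1+|q|^{2})\mathbf{1}$ rather than citing invertibility of $1-q$ in $\mathbb{H}$, and you spell out $\mathcal{X}^{F}=\mathcal{E}$ (which the paper defers to Corollary \ref{domain2}) before invoking Frobenius.
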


\begin{proof}
By Proposition \ref{predst}, $\mathcal{A}_{\pi}$ is a (SOT)-closed algebra.
Note that all operators $T_{y,f}$ defined by the formula (\ref{mean1}) for any
pair $(y,f),$ where $y\in\mathcal{E}$, $f\in\mathcal{E}^{\ast}$, belong to
$\mathcal{A}_{\pi}$, since they preserve $\mathcal{E}$ (moreover, they map
$\mathcal{X}$ to $\mathcal{E}$) and commute with $\pi(G)$ on $\mathcal{E}$.
Let $0\neq y\in\mathcal{E}$ and $0\neq x\in\mathcal{X}$; we are going to find
$f\in\mathcal{E}^{\ast}$ such that $T_{y,f}x=y$.

We claim that there exists $f\in\mathcal{E}^{\ast}$ satisfying conditions
\begin{equation}
f(x)=\frac{1}{2}\text{ and }(\pi^{\ast}(i)f)(x)=(\pi^{\ast}(j)f)(x)=(\pi
^{\ast}(k)f)(x)=0. \label{popolam}%
\end{equation}
To prove the claim, we consider the linear maps $f\mapsto f(x)$ and $v$:
$f\mapsto v(f)$ on $\mathcal{E}^{\ast}$, where
\[
v(f)=((\pi^{\ast}(i)f)(x),(\pi^{\ast}(j)f)(x),(\pi^{\ast}(k)f)(x))\in
\mathbb{R}^{3}.
\]
It suffices to show that there is $f\in\mathcal{E}^{\ast}$ with $v(f)=0$ while
$f(x)\neq0$. If, by contradiction, $v(f)=0$ implies $f(x)=0,$ then the map
$\nu(f)\mapsto f(x),$ $f\in\mathcal{E}^{\ast},$ is well defined and linear on
a subspace $\nu(\mathcal{E}^{\ast})\subseteq\mathbb{R}^{3}.$ Extending it to a
linear functional on $\mathbb{R}^{3}$ we get that there exist $\alpha
,\beta,\gamma\in\mathbb{R}$ with
\[
f(x)=\alpha(\pi^{\ast}(i)f)(x)+\beta(\pi^{\ast}(j)f)(x)+\gamma(\pi^{\ast
}(k)f)(x),\text{ for all }f\in\mathcal{E}^{\ast}.
\]
This can be written in the form
\[
(\pi^{\ast}(q)f)(x)=0,\text{ for all }f\in\mathcal{E}^{\ast},\text{ where
}q=1-\alpha i-\beta j-\gamma k.
\]
In other words, $\pi^{\ast}(q)(\mathcal{E}^{\ast})\subset\mathcal{E}_{0}%
^{\ast}$ where $\mathcal{E}_{0}^{\ast}=\{f\in\mathcal{E}^{\ast}:f(x)=0\}$.
Since $q$ is invertible in $\mathbb{H}$, the operator $\pi^{\ast}(q)$ is
invertible. So we get the equality $\mathcal{E}^{\ast}=\mathcal{E}_{0}^{\ast}%
$. This contradicts the assumption that $\pi$ is regular, because the subspace
$\mathcal{E}_{0}^{\ast}$ is not weak$^{\ast}$-dense in $\mathcal{X}^{\ast}$.

Let now $f\in\mathcal{E}^{\ast}$ satisfy (\ref{popolam}). Then, by
(\ref{mean1}),
\begin{align*}
T_{y,f}x  &  =\sum_{g\in G}(\pi(g)^{-1}y\otimes\pi(g)^{\ast}f)x=\sum_{g\in
G}(\pi(g)^{\ast}f)(x)\pi(g)^{-1}y\\
&  =(\pi^{\ast}(1)f)(x)\pi(1)y+(\pi^{\ast}(-1)f)(x)\pi(-1)y=2f(x)y=y.
\end{align*}
Thus we proved that, for any $0\neq x\in\mathcal{X}$, the space $\mathcal{A}%
_{\pi}x$ contains $\mathcal{E}$. Since $\overline{\mathcal{E}}=\mathcal{X}$,
$\mathcal{A}_{\pi}$ is transitive on $\mathcal{X}$.

Taking in account that all operators $T_{y,f}$ are of finite rank, we conclude
that $\mathcal{A}_{\pi}$ is an $L$-algebra. Since $\mathcal{A}_{\pi}$ is
(SOT)-closed, it is a Lomonosov algebra. As $\dim\pi(\mathbb{H})=4$ (the
representation $\pi$ of $\mathbb{H}$ is injective because $\mathbb{H}$ has no
non-trivial ideals) and the commutant $\mathbb{D}$ of $\mathcal{A}_{\pi}$
contains $\pi(\mathbb{H})$, we conclude that $\mathbb{D}=\pi(\mathbb{H})$ and
$\mathcal{A}_{\pi}$ is a Lomonosov algebra of quaternion type.
\end{proof}

\begin{corollary}
\label{domain2}In assumptions of Theorem \emph{\ref{Api}} the space
$\mathcal{X}^{F}:=\mathcal{A}_{\pi}^{F}\mathcal{X}$ coincides with
$\mathcal{E}$.
\end{corollary}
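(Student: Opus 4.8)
The plan is to establish the equality $\mathcal{X}^{F}=\mathcal{E}$ by proving the two inclusions separately, following exactly the pattern already used in the complex case (Proposition \ref{domain}). Both ingredients needed are in fact already available: one comes straight from the definition of $\mathcal{A}_{\pi}$, and the other from the construction of the operators $T_{y,f}$ carried out inside the proof of Theorem \ref{Api}.

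First I would prove $\mathcal{X}^{F}\subseteq\mathcal{E}$. Fix $T\in\mathcal{A}_{\pi}^{F}$. By the very definition of $\mathcal{A}_{\pi}$, every operator in $\mathcal{A}_{\pi}$ preserves $\mathcal{E}$, so $T\mathcal{E}\subseteq\mathcal{E}$. Now take an arbitrary $x\in\mathcal{X}$; since $\mathcal{E}$ is dense, choose $e_{n}\in\mathcal{E}$ with $e_{n}\rightarrow x$, so that $Te_{n}\rightarrow Tx$ by boundedness of $T$. Each $Te_{n}$ lies in $T\mathcal{E}$, which is a finite-dimensional (hence closed) subspace because $T$ has finite rank. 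Therefore $Tx=\lim_{n}Te_{n}\in\overline{T\mathcal{E}}=T\mathcal{E}\subseteq\mathcal{E}$. This shows $T\mathcal{X}\subseteq\mathcal{E}$ for every $T\in\mathcal{A}_{\pi}^{F}$, and since $\mathcal{X}^{F}$ is by definition the linear span of all such $T\mathcal{X}$, we conclude $\mathcal{X}^{F}\subseteq\mathcal{E}$.

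For the reverse inclusion $\mathcal{E}\subseteq\mathcal{X}^{F}$, I would simply invoke what was established during the proof of Theorem \ref{Api}: for any $0\neq y\in\mathcal{E}$ and any fixed $0\neq x\in\mathcal{X}$, there is $f\in\mathcal{E}^{\ast}$ with $T_{y,f}\in\mathcal{A}_{\pi}^{F}$ and $T_{y,f}x=y$. Hence $y=T_{y,f}x\in\mathcal{A}_{\pi}^{F}\mathcal{X}=\mathcal{X}^{F}$. As $0\in\mathcal{X}^{F}$ trivially, every element of $\mathcal{E}$ belongs to $\mathcal{X}^{F}$, giving $\mathcal{E}\subseteq\mathcal{X}^{F}$. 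Combining the two inclusions yields $\mathcal{X}^{F}=\mathcal{E}$.

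There is essentially no serious obstacle here: the genuinely non-trivial step---producing, for each target vector $y\in\mathcal{E}$, a finite-rank operator in $\mathcal{A}_{\pi}$ whose range hits $y$---was already accomplished in Theorem \ref{Api} using the regularity of $\pi$ together with the quaternion relations. The only point requiring a little care is the first inclusion, where one must combine density of $\mathcal{E}$ with the closedness of the finite-dimensional range $T\mathcal{E}$ to push the containment $T\mathcal{E}\subseteq\mathcal{E}$ up to $T\mathcal{X}\subseteq\mathcal{E}$; but this is a routine finite-rank argument, identical to the one used in the complex setting.
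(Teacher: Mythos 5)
Your proof is correct and follows essentially the same route as the paper: the inclusion $\mathcal{X}^{F}\subseteq\mathcal{E}$ via $T\mathcal{X}\subseteq\overline{T\mathcal{E}}=T\mathcal{E}\subseteq\mathcal{E}$ (finite-dimensionality of $T\mathcal{E}$ plus density of $\mathcal{E}$), and the reverse inclusion by reusing the operators $T_{y,f}$ and the functional satisfying (\ref{popolam}) constructed in the proof of Theorem \ref{Api}. The only cosmetic difference is that the paper specializes to $x=y$ (taking $T_{x,f}x=x$), whereas you allow an arbitrary nonzero $x\in\mathcal{X}$; both work for the same reason.
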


\begin{proof}
Let $x\in\mathcal{E}$. By Theorem \ref{Api}, there is $f\in\mathcal{E}^{\ast}$
satisfying condition (\ref{popolam}). It follows that $T_{x,f}x=x,$ where
$T_{x,f}\in\mathcal{A}_{\pi}^{F}$ is the operator defined in (\ref{mean1})
with $y=x$. Therefore $x\in\mathcal{X}^{F}$ whence $\mathcal{E}\subset
\mathcal{X}^{F}$. On the other hand, if $T\in\mathcal{A}_{\pi}^{F}$ then
$T\mathcal{X}=T\overline{\mathcal{E}}\subset\overline{T\mathcal{E}}$. Since
$T\mathcal{E}$ is finite-dimensional, $\overline{T\mathcal{E}}=T\mathcal{E}%
\subset\mathcal{E}$. Therefore $\mathcal{X}^{F}\subset\mathcal{E}$.\bigskip
\end{proof}

Now we return to the representations $\pi_{\mathcal{A}}$ of $G_{\mathbb{H}}$
related to arbitrary $L$-algebras $\mathcal{A}$ of quaternion type (see the
discussion at the beginning of Section 4). Recall that operators
$\pi_{\mathcal{A}}(g)$ act on the space $\mathcal{X}^{F}=\mathcal{A}%
^{F}\mathcal{X}$, so $\mathcal{E}_{\pi_{\mathcal{A}}} = \mathcal{X}^{F}.$

\begin{lemma}
\label{regularity} Every representation $\pi_{\mathcal{A}}$ is regular.
\end{lemma}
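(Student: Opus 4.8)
The plan is to verify the density condition in the definition of regularity by exhibiting a large, weak*-dense supply of functionals lying in $\mathcal{E}^{\ast}$, where throughout $\mathcal{E}=\mathcal{X}^{F}$, $G=G_{\mathbb{H}}$ and $\pi=\pi_{\mathcal{A}}$. Recall that a linear subspace of $\mathcal{X}^{\ast}$ is weak*-dense precisely when its pre-annihilator in $\mathcal{X}$ is trivial. Since the space $\mathcal{E}^{\ast}$ defined by (\ref{dualspace}) is clearly a linear subspace of $\mathcal{X}^{\ast}$, it suffices to produce a family of functionals inside $\mathcal{E}^{\ast}$ whose common pre-annihilator is $\{0\}$.

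The functionals I would use are those of the form $h(x)=f(Tx)$ with $f\in\mathcal{X}^{\ast}$ and $T\in\mathcal{A}^{F}$. First I would check that each such $h$ belongs to $\mathcal{E}^{\ast}$. Fix $g\in G_{\mathbb{H}}$ and $x\in\mathcal{X}^{F}$. By Lemma \ref{more} every operator in $\mathcal{A}$ is $\mathbb{D}$-linear on $\mathcal{X}^{F}$, and $\pi_{\mathcal{A}}(g)\in\mathbb{D}$, so $T\pi_{\mathcal{A}}(g)x=\pi_{\mathcal{A}}(g)Tx$. The crucial point is that, although $\pi_{\mathcal{A}}(g)$ is only a densely defined (closable) operator, the composition $\pi_{\mathcal{A}}(g)T$ is a bounded operator on $\mathcal{X}$: indeed $T$ is bounded with finite-dimensional range $T\mathcal{X}\subset\mathcal{X}^{F}$, and the restriction of $\pi_{\mathcal{A}}(g)$ to the finite-dimensional subspace $T\mathcal{X}$ is automatically bounded. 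Consequently
\[
|h(\pi_{\mathcal{A}}(g)x)|=|f(T\pi_{\mathcal{A}}(g)x)|=|f(\pi_{\mathcal{A}}(g)Tx)|\leq\Vert f\Vert\,\Vert\pi_{\mathcal{A}}(g)T\Vert\,\Vert x\Vert,
\]
and since $G_{\mathbb{H}}$ is finite we may take the constant $C=\Vert f\Vert\max_{g\in G_{\mathbb{H}}}\Vert\pi_{\mathcal{A}}(g)T\Vert$ uniformly in $g$. By (\ref{dualspace}) this shows $h\in\mathcal{E}^{\ast}$.

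It then remains to compute the pre-annihilator of this family. If $x\in\mathcal{X}$ satisfies $f(Tx)=0$ for all $f\in\mathcal{X}^{\ast}$ and all $T\in\mathcal{A}^{F}$, then $Tx=0$ for every $T\in\mathcal{A}^{F}$, i.e. $x\in\bigcap_{T\in\mathcal{A}^{F}}\ker T$. This intersection is a closed $\mathcal{A}$-invariant subspace (since $\mathcal{A}^{F}$ is an ideal of $\mathcal{A}$), and it is proper because $\mathcal{A}^{F}\neq0$; by transitivity of $\mathcal{A}$ it is therefore $\{0\}$. Hence the pre-annihilator of $\mathcal{E}^{\ast}$ is trivial, so $\mathcal{E}^{\ast}$ is weak*-dense in $\mathcal{X}^{\ast}$ and $\pi_{\mathcal{A}}$ is regular.

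The only genuinely delicate step is the boundedness of $\pi_{\mathcal{A}}(g)T$, and this is exactly where the finite rank of the operators in $\mathcal{A}^{F}$ is decisive: the operators $\pi_{\mathcal{A}}(g)$ themselves are merely densely defined and generally unbounded, so without a finite-dimensional range for $T$ to absorb that unboundedness one could not conclude that $x\mapsto f(Tx)$ belongs to $\mathcal{E}^{\ast}$. Everything else is the standard pre-annihilator criterion together with transitivity.
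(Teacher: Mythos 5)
Your proof is correct and follows essentially the same route as the paper: it uses the same family of functionals $f_T(x)=f(Tx)$ with $f\in\mathcal{X}^{\ast}$, $T\in\mathcal{A}^{F}$, establishes membership in $\mathcal{E}^{\ast}$ via commutation of $T$ with $\pi_{\mathcal{A}}(g)$ and boundedness of $\pi_{\mathcal{A}}(g)$ on the finite-dimensional range $T\mathcal{X}$ (with finiteness of $G_{\mathbb{H}}$ giving a uniform constant), and concludes by the same pre-annihilator argument reducing to $\ker\mathcal{A}^{F}=\{0\}$. No gaps.
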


\begin{proof}
For any functional $f\in\mathcal{X}^{\ast}$ and operator $T\in\mathcal{A}^{F}%
$, we denote by $f_{T}$ the functional on $\mathcal{X}$ defined by the
equality $f_{T}(x)=f(Tx)$. Then, for each $x\in\mathcal{X}^{F}$,
\[
|f_{T}(\pi_{\mathcal{A}}(g)x)|=|f_{T}(T\pi_{\mathcal{A}}(g)x)|=|f(\pi
_{\mathcal{A}}(g)Tx)|\leq\Vert f\Vert\Vert\pi_{\mathcal{A}}(g)\Vert_{T}\Vert
T\Vert\Vert x\Vert,
\]
where $\Vert\pi_{\mathcal{A}}(g)\Vert_{T}$ is the norm of the restriction of
$\pi_{\mathcal{A}}(g)$ to the finite-dimensional subspace $T\mathcal{X}$.
Since $G_{\mathbb{H}}$ is finite, $\sup_{g\in G_{\mathbb{H}}}\Vert
\pi_{\mathcal{A}}(g)\Vert_{T}<\infty$ whence
\[
|f_{T}(\pi(g)x)|<C\Vert x\Vert,\text{ for all }g\in G_{\mathbb{H}} \text{ AND
} x\in\mathcal{X}^{F}.
\]
Hence all functionals $f_{T}$ belong to $(\mathcal{X}^{F})^{\ast}$ (see
(\ref{dualspace})). So to see that the subspace $(\mathcal{X}^{F})^{\ast}$ is
weak*-dense in $\mathcal{X}^{\ast}$ it suffices to show that the intersection
of kernels of all functionals $f_{T}$ is $\{0\}$. If $x\in\mathcal{X}$ is such
that $f_{T}(x)=0$, for all $f\in\mathcal{X}^{\ast}$ and all $T\in
\mathcal{A}^{F},$ then $Tx\in\cap_{f\in\mathcal{X}^{\ast}}\ker f=\{0\}$, for
all $T\in\mathcal{A}^{F}$. Hence $x\in\ker\mathcal{A}^{F}=\{0\}$.
\end{proof}

\begin{theorem}
\label{ClasQuat} Every $L$-algebra $\mathcal{A}$ of quaternion type is
contained in the Lomonosov algebra $\mathcal{A}_{\pi}$ of quaternion
type\emph{,} where $\pi$ is some closed regular representation of
$G_{\mathbb{H}}$.
\end{theorem}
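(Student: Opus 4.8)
The plan is to follow the pattern of the complex-type case (Theorem \ref{ClasComp}): I will manufacture the required representation $\pi$ as the closure of the natural representation $\pi_{\mathcal{A}}$ of $G_{\mathbb{H}}$ on $\mathcal{X}^{F}$, and then verify the inclusion $\mathcal{A}\subset\mathcal{A}_{\pi}$ directly. First I would recall that, since $\mathcal{A}$ is of quaternion type, $\mathbb{D}\cong\mathbb{H}$ and the associated isomorphism $\pi_{\mathcal{A}}$ restricts to a representation of $G_{\mathbb{H}}$ on $\mathcal{E}_{\pi_{\mathcal{A}}}=\mathcal{X}^{F}$. By Lemma \ref{regularity} this representation is regular, and hence, by Lemma \ref{RegClos}(i), closable.

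Next I would pass to the closure $\pi:=\overline{\pi_{\mathcal{A}}}$ furnished by Lemma \ref{closure}; by construction $\pi$ is a closed representation of $G_{\mathbb{H}}$ on the larger domain $\mathcal{E}_{\pi}\supseteq\mathcal{X}^{F}$, and by Lemma \ref{RegClos}(ii) it remains regular. Applying Theorem \ref{Api} to $\pi$ then shows at once that $\mathcal{A}_{\pi}$ is a Lomonosov algebra of quaternion type, exhibiting the object named in the statement. It only remains to check the inclusion $\mathcal{A}\subset\mathcal{A}_{\pi}$, i.e. that every $T\in\mathcal{A}$ preserves $\mathcal{E}_{\pi}$ and commutes with all $\pi(g)$ on $\mathcal{E}_{\pi}$.

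For this inclusion I would argue in two stages. On the original domain $\mathcal{X}^{F}$ the claim is immediate: $\mathcal{X}^{F}$ is $\mathcal{A}$-invariant, and since $\pi_{\mathcal{A}}(g)\in\mathbb{D}$ for each $g\in G_{\mathbb{H}}$, Lemma \ref{more} tells us that every $T\in\mathcal{A}$ commutes with $\pi_{\mathcal{A}}(g)$ on $\mathcal{X}^{F}$. To promote this to $\mathcal{E}_{\pi}$, fix $T\in\mathcal{A}$ and $x\in\mathcal{E}_{\pi}$; by the description of the closure there are $x_{n}\in\mathcal{X}^{F}$ with $\pi_{\mathcal{A}}(g)x_{n}\to\pi(g)x$ for every $g\in G_{\mathbb{H}}$. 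Then $Tx_{n}\in\mathcal{X}^{F}\subseteq\mathcal{E}_{\pi}$, and using the commutation on $\mathcal{X}^{F}$ together with the boundedness of $T$,
\[
\pi(g)(Tx_{n})=T\pi_{\mathcal{A}}(g)x_{n}\to T\pi(g)x\qquad\text{for all }g\in G_{\mathbb{H}}.
\]
Since $\pi$ is closed, there is $y\in\mathcal{E}_{\pi}$ with $T\pi(g)x=\pi(g)y$ for all $g$; taking $g=e_{G}$ gives $y=Tx$, so $Tx\in\mathcal{E}_{\pi}$ and $\pi(g)(Tx)=T\pi(g)x$. Thus $T\in\mathcal{A}_{\pi}$.

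The main obstacle is precisely this promotion step: the commutation relation is only available a priori on $\mathcal{X}^{F}$ (through Lemma \ref{more}), and one must be careful that the approximating sequence $(Tx_{n})$ lies in the domain $\mathcal{E}_{\pi}$ on which $\pi$ is declared closed --- which it does, thanks to the inclusion $\mathcal{X}^{F}\subseteq\mathcal{E}_{\pi}$ --- so that the defining property of a closed representation can be invoked to conclude simultaneously that $T$ preserves $\mathcal{E}_{\pi}$ and intertwines $\pi$. Everything else is a direct assembly of the auxiliary lemmas on closable and regular representations.
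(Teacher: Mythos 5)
Your proposal is correct and follows essentially the same route as the paper's own proof: take the closure $\pi=\overline{\pi_{\mathcal{A}}}$ of the natural representation on $\mathcal{X}^{F}$ (regular by Lemma \ref{regularity}, closable and with regular closure by Lemma \ref{RegClos}), invoke Theorem \ref{Api}, and establish $\mathcal{A}\subset\mathcal{A}_{\pi}$ by approximating $x\in\mathcal{E}_{\pi}$ with $x_{n}\in\mathcal{X}^{F}$ and using closedness of $\pi$ to conclude that $T$ preserves $\mathcal{E}_{\pi}$ and intertwines $\pi$. Your write-up is, if anything, slightly more explicit than the paper's in citing Lemma \ref{more} for the commutation on $\mathcal{X}^{F}$ and in spelling out the $g=e_{G}$ step.
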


\begin{proof}
It follows from Lemma \ref{regularity} that the representation $\pi
_{\mathcal{A}}$ of $G_{\mathbb{H}}$ on $\mathcal{X}^{F}$ is regular. Hence, by
Lemma \ref{RegClos}, it is closable and its closure $\pi=\overline
{\pi_{\mathcal{A}}}$ is a regular closed representation. So by Theorem
\ref{Api}, the algebra $\mathcal{A}_{\pi}$ is a Lomonosov algebra of
quaternion type.

The domain $\mathcal{E}_{\pi}$ of $\pi$ clearly contains the subspace
$\mathcal{E}_{\pi_{\mathcal{A}}}=\mathcal{X}^{F}$ of $\mathcal{X}$. For
$z\in\mathcal{E}_{\pi}$, there are $y_{n}\in\mathcal{X}^{F}$ such that
$\pi_{\mathcal{A}}(g)y_{n}\rightarrow\pi(g)z$, for all $g\in G$. Hence, for
each $T\in\mathcal{A}$, the sequence $t_{n}=Ty_{n}$ satisfies conditions
\[
t_{n}\rightarrow Tz\text{ and }\pi_{\mathcal{A}}(g)t_{n}=\pi_{\mathcal{A}%
}(g)Ty_{n}=T\pi_{\mathcal{A}}(g)y_{n}\rightarrow T\pi(g)z.
\]
It follows that $Tz\in\mathcal{E}_{\pi}$ and $T\pi(g)z=\pi(g)Tz$.

Thus all operators in $\mathcal{A}$ preserve the subspace $\mathcal{E}_{\pi}$
and commute with operators $\pi(g)$ on $\mathcal{E}_{\pi}$. By the definition
of $\mathcal{A}_{\pi}$, this means that $\mathcal{A}\subset\mathcal{A}_{\pi}%
$.\bigskip
\end{proof}

We will need a special construction of closed regular representations of the
group $G_{\mathbb{H}}$, resembling the construction of Lomonosov algebras of
complex type considered in the previous section.

Let ${\mathcal{H}}_{q}$ be an infinite-dimensional separable\textbf{
}quaternion Hilbert space. The multiplication by $i,j,k$ defines a
representation of $G_{\mathbb{H}}$ in ${\mathcal{H}}_{q}$ which we denote by
$\tau$. Let $\mathcal{M},\mathcal{N}$ be a generic pair of quaternion-linear
subspaces in ${\mathcal{H}}_{q}$:
\[
\mathcal{M}\cap\mathcal{N}=\{0\}\text{ and }\overline{\mathcal{M}+\mathcal{N}%
}={\mathcal{H}}_{q}.
\]
On the space $\mathcal{E}=\mathcal{M}+\mathcal{N}$ we define a representation
$\pi$ of $G_{\mathbb{H}}$ by setting
\begin{equation}
\pi(g)(x+y)=\tau(g)x+\tau(\alpha(g))y,\text{ for }x\in\mathcal{M}%
,y\in\mathcal{N}, \label{4.1}%
\end{equation}
where $\alpha$ is an automorphism of $G_{\mathbb{H}}$ such that $\alpha(i)=-i$.

\begin{lemma}
\label{CloReg} The representation $\pi$ is closed and regular.
\end{lemma}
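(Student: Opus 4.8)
The plan is to prove the two properties separately, using throughout that each $\tau(g)$, $g\in G_{\mathbb{H}}$, is an isometry of $\mathcal{H}_{q}$ that leaves the quaternion-linear subspaces $\mathcal{M}$ and $\mathcal{N}$ invariant, and that every $x\in\mathcal{E}=\mathcal{M}+\mathcal{N}$ has a unique decomposition $x=m+n$ with $m\in\mathcal{M}$, $n\in\mathcal{N}$ (because $\mathcal{M}\cap\mathcal{N}=\{0\}$). These are the only structural facts about the construction that the argument will need.

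First I would establish closedness by checking condition (\ref{cloclo}) directly. Suppose $z_{n}=m_{n}+n_{n}\in\mathcal{E}$ (with $m_{n}\in\mathcal{M}$, $n_{n}\in\mathcal{N}$) and $\pi(g)z_{n}\rightarrow w(g)$ for every $g\in G_{\mathbb{H}}$. Taking $g=e_{G}$ gives $m_{n}+n_{n}\rightarrow w(e_{G})$, while taking $g=i$ and using $\alpha(i)=-i$ gives $\tau(i)(m_{n}-n_{n})\rightarrow w(i)$, hence $m_{n}-n_{n}\rightarrow\tau(i)^{-1}w(i)$ since $\tau(i)^{-1}=\tau(-i)$ is continuous. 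Adding and subtracting, the sequences $m_{n}$ and $n_{n}$ converge in norm; as $\mathcal{M}$ and $\mathcal{N}$ are closed, their limits $m$ and $n$ lie in $\mathcal{M}$ and $\mathcal{N}$, so $z:=m+n\in\mathcal{E}$. Continuity of each $\tau(g)$ then yields $\pi(g)z_{n}\rightarrow\pi(g)z$, and comparing with the hypothesis gives $w(g)=\pi(g)z$ for all $g$, which is exactly what closedness demands.

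For regularity I would show that $\mathcal{E}^{\ast}$ contains both annihilators $\mathcal{M}^{\circ}=\{f\in\mathcal{X}^{\ast}:f|_{\mathcal{M}}=0\}$ and $\mathcal{N}^{\circ}$. Indeed, if $f|_{\mathcal{M}}=0$ then for $x=m+n\in\mathcal{E}$ and any $g$ we have $\tau(g)m,\tau(\alpha(g))m\in\mathcal{M}$, so $f(\tau(g)m)=f(\tau(\alpha(g))m)=0$ and therefore
\[
f(\pi(g)x)=f(\tau(\alpha(g))n)=f(\tau(\alpha(g))x).
\]
Since $\tau(\alpha(g))$ is isometric, $|f(\pi(g)x)|\leq\|f\|\,\|x\|$ with a constant independent of $g$, so $f\in\mathcal{E}^{\ast}$ by (\ref{dualspace}); the inclusion $\mathcal{N}^{\circ}\subset\mathcal{E}^{\ast}$ is symmetric.

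It then remains to note that $\mathcal{M}^{\circ}+\mathcal{N}^{\circ}$ is weak$^{\ast}$-dense in $\mathcal{X}^{\ast}$. Its pre-annihilator equals $(\mathcal{M}^{\circ})_{\perp}\cap(\mathcal{N}^{\circ})_{\perp}=\overline{\mathcal{M}}\cap\overline{\mathcal{N}}=\mathcal{M}\cap\mathcal{N}=\{0\}$, by the bipolar theorem together with the closedness of $\mathcal{M}$ and $\mathcal{N}$; hence $\mathcal{M}^{\circ}+\mathcal{N}^{\circ}$, and a fortiori $\mathcal{E}^{\ast}$, is weak$^{\ast}$-dense, i.e. $\pi$ is regular. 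I expect the regularity part to be the main obstacle: one must locate a weak$^{\ast}$-dense family of functionals on which all the $\pi(g)$ act boundedly, and the decisive observation is that the annihilators of $\mathcal{M}$ and $\mathcal{N}$ serve this purpose, their sum being weak$^{\ast}$-dense exactly because of the generic-pair condition $\mathcal{M}\cap\mathcal{N}=\{0\}$.
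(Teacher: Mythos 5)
Your proof is correct and follows essentially the same route as the paper's: closedness via the $g=1$ and $g=i$ limits (using $\alpha(i)=-i$ to recover $m_n$ and $n_n$ separately and the closedness of $\mathcal{M},\mathcal{N}$), and regularity by showing the annihilators of $\mathcal{M}$ and $\mathcal{N}$ lie in $\mathcal{E}^{\ast}$ and that their sum is weak$^{\ast}$-dense because $\mathcal{M}\cap\mathcal{N}=\{0\}$. The only cosmetic difference is that you bound $f(\pi(g)x)$ for $f$ annihilating $\mathcal{M}$ by comparing with $\tau(\alpha(g))$, whereas the paper does the mirror-image computation for $f$ annihilating $\mathcal{N}$ and compares with $\tau(g)$.
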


\begin{proof}
Let $\pi(g)(x_{n}+y_{n})\rightarrow z(g)$ for all $g\in G_{\mathbb{H}}$.
Taking subsequently $g=1$ and $g=i$, we obtain that
\[
x_{n}+y_{n}\rightarrow z(1),\text{ and }ix_{n}-iy_{n}\rightarrow z(i),
\]
whence
\[
x_{n}\rightarrow x:=\frac{1}{2}(z(1)-iz(i))\in\mathcal{M}\text{ and }%
y_{n}\rightarrow y:=\frac{1}{2}(z(1)+iz(i))\in\mathcal{N}.
\]
It follows that, for all $g\in G_{\mathbb{H}}$,
\[
\pi(g)(x_{n}+y_{n})=\tau(g)x_{n}+\tau(\alpha(g))y_{n}\rightarrow\tau
(g)x+\tau(\alpha(g))y=\pi(g)(x+y).
\]
Thus $z(g)=\pi(g)(x+y)$, so that the representation $\pi$ is closed.

Furthermore, let $f\in\mathcal{N}^{\bot}$, the annihilator of $\mathcal{N}$ in
$\mathcal{X}^{\ast}$. Since $\mathcal{N}$ is $\tau$-invariant, we get that,
for each $x\in\mathcal{M},y\in\mathcal{N}$ and $g\in G$,
\[
f(\pi(g)(x+y))=f(\tau(g)x+\tau(\alpha(g))y)=f(\tau(g)x)=f(\tau(g)x+\tau(g)y),
\]
whence $|f(\pi(g)(x+y))|\leq\Vert f\Vert\Vert x+y\Vert$. Thus $f\in
\mathcal{E}^{\ast}$, so that $\mathcal{N}^{\bot}\subset\mathcal{E}^{\ast}$.
Similarly $\mathcal{M}^{\bot}\subset\mathcal{E}^{\ast}$. Since $(\mathcal{M}%
^{\bot}+\mathcal{N}^{\bot})_{\bot}=\mathcal{M}\cap\mathcal{N}=\{0\}$, we
obtain that $\mathcal{E}^{\ast}$ is weak*-dense in $\mathcal{X}^{\ast}$. Hence
the representation $\pi$ is regular.\bigskip
\end{proof}

We denote the representation $\pi$ defined in (\ref{4.1}) by $\pi
_{\mathcal{M},\mathcal{N}}$; the corresponding Lomonosov algebra
$\mathcal{A}_{\pi}$ is denoted by $\mathcal{A}_{\mathcal{M},\mathcal{N}}$.

\begin{corollary}
\label{continuum}In an infinite-dimensional\emph{,} separable real Hilbert
space there is a continuum of pairwise non-similar Lomonosov algebras of
quaternion type.
\end{corollary}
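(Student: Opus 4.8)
The plan is to mimic the proof of Corollary \ref{cardinality}, replacing the algebras $\mathcal{A}_S$ by the algebras $\mathcal{A}_{\mathcal{M},\mathcal{N}}$ and the field $\mathbb{C}$ by $\mathbb{H}$. First I would reduce the similarity problem to an isomorphism problem for the domains. Suppose $\pi_1=\pi_{\mathcal{M}_1,\mathcal{N}_1}$ and $\pi_2=\pi_{\mathcal{M}_2,\mathcal{N}_2}$ arise from two generic pairs and that $T\mathcal{A}_{\pi_1}T^{-1}=\mathcal{A}_{\pi_2}$ for a bounded invertible $T$ on $\mathcal{H}$. Conjugation by an invertible operator preserves the ideal of finite-rank operators, so $T(\mathcal{A}_{\pi_1}^{F})T^{-1}=\mathcal{A}_{\pi_2}^{F}$ and hence $T(\mathcal{A}_{\pi_1}^{F}\mathcal{H})=\mathcal{A}_{\pi_2}^{F}\mathcal{H}$. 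By Corollary \ref{domain2}, $\mathcal{A}_{\pi_i}^{F}\mathcal{H}=\mathcal{E}_{\pi_i}=\mathcal{M}_i+\mathcal{N}_i$, so $T(\mathcal{M}_1+\mathcal{N}_1)=\mathcal{M}_2+\mathcal{N}_2$; that is, similar algebras of this form have isomorphic domains as operator ranges in $\mathcal{H}$. Thus it suffices to exhibit a continuum of generic pairs whose sums are pairwise non-isomorphic operator ranges.

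To build them I would view $\mathcal{H}$ as the quaternion Hilbert space $\mathcal{H}_q$ and, for each real $t>1$, choose pairwise orthogonal quaternion-linear subspaces $H_k$ ($k\ge 0$) with $\mathcal{H}_q=\oplus_k H_k$, $\dim_{\mathbb{H}}H_k=[k^{t}]$ for $k>0$, and $\dim_{\mathbb{H}}H_0=\infty$. Since each orthogonal projection $P_{H_k}$ is $\mathbb{H}$-linear and the weights in (\ref{DimSum}) are real scalars, the operator $T=\sum_k 2^{-k}P_{H_k}$ is $\mathbb{H}$-linear, so $\mathcal{L}(t):=\mathcal{L}_{(H_k)}=T\mathcal{H}_q$ is a quaternion-linear dense operator range containing the infinite-dimensional closed quaternion subspace $H_0$. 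Using the quaternion analogue of the decomposition results of \cite{Dixm,FW}, I would write $\mathcal{L}(t)=\mathcal{M}+\mathcal{N}$ for a generic pair of $\mathbb{H}$-linear subspaces. Then Lemma \ref{CloReg} and Theorem \ref{Api} guarantee that $\mathcal{A}_{\mathcal{M},\mathcal{N}}$ is a Lomonosov algebra of quaternion type, and by Corollary \ref{domain2} its domain equals $\mathcal{L}(t)$.

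The non-isomorphism of these domains I would settle exactly as in Corollary \ref{cardinality}. Regarding the $H_k$ as real subspaces gives $\dim_{\mathbb{R}}H_k=4[k^{t}]$, and the constant factor $4$ is irrelevant to the comparison (\ref{inneq}) furnished by \cite[Theorem 3.3]{FW}; the asymptotic estimate leading to (\ref{asymp}) then shows that $\mathcal{L}(t)$ and $\mathcal{L}(r)$ are non-isomorphic whenever $t\neq r$. Combined with the first paragraph, this yields that the algebras $\mathcal{A}_{\mathcal{M},\mathcal{N}}$ obtained for distinct $t\in(1,\infty)$ are pairwise non-similar, producing a continuum.

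The main obstacle is the step used in the second paragraph: one must ensure that a dense operator range can be realized as $\mathcal{M}+\mathcal{N}$ for a generic pair of \emph{quaternion}-linear subspaces, since the representation $\pi_{\mathcal{M},\mathcal{N}}$ of (\ref{4.1}) is defined only for $\mathbb{H}$-linear $\mathcal{M},\mathcal{N}$. The cited decompositions are stated for complex (or real) Hilbert spaces, so I would either establish a quaternion version of \cite[Theorem 2.6]{FW} or carry the quaternion structure through the operator-range constructions, checking that the subspaces produced are invariant under the scalar action of $\mathbb{H}$. Everything else transfers verbatim from the complex case.
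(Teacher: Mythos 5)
Your reduction of similarity to isomorphism of domains via Corollary \ref{domain2}, your choice of operator ranges with $\dim_{\mathbb{R}}H_{k}=4[k^{t}]$, and your appeal to \cite[Theorem 3.3]{FW} together with the asymptotic comparison of (\ref{inneq}) all match the paper's argument. The genuine gap is exactly the step you flag in your final paragraph: you need a generic pair of \emph{quaternion-linear} subspaces $\mathcal{M},\mathcal{N}$ with $\mathcal{M}+\mathcal{N}=\mathcal{L}(t)$, and the decomposition results you cite (\cite[Theorem 2.6]{FW}, \cite{Dixm}) are stated for complex or real Hilbert spaces. Proposing to ``establish a quaternion version'' or to ``carry the quaternion structure through the operator-range constructions'' is a statement of the problem, not a solution; as written, the proof is incomplete at its crucial point, since nothing guarantees that the Fillmore--Williams construction produces $\mathbb{H}$-invariant subspaces.

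The paper closes this gap with a tensor-product device that avoids any quaternion analogue of \cite[Theorem 2.6]{FW}. Write $\mathcal{H}=\mathbb{H}\otimes\mathcal{H}_{0}$ (identifying $l_{4}^{2}$ with $\mathbb{H}$), which turns $\mathcal{H}$ into a quaternion Hilbert space via $q_{1}(q_{2}\otimes x)=q_{1}q_{2}\otimes x$. Then run the dimension pattern downstairs: choose $H_{k}^{0}(t)\subset\mathcal{H}_{0}$ with $\dim H_{k}^{0}(t)=[k^{t}]$ for $k>0$ and $\dim H_{0}^{0}=\infty$, and set $\mathcal{L}_{0}(t)=\mathcal{L}_{(H_{k}^{0}(t))}$. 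This is an operator range in the \emph{real} Hilbert space $\mathcal{H}_{0}$ containing an infinite-dimensional closed subspace, so the cited theorems apply exactly as stated and yield a generic pair $\mathcal{M}_{0}(t),\mathcal{N}_{0}(t)$ with $\mathcal{M}_{0}(t)+\mathcal{N}_{0}(t)=\mathcal{L}_{0}(t)$. Now $\mathcal{M}(t)=\mathbb{H}\otimes\mathcal{M}_{0}(t)$ and $\mathcal{N}(t)=\mathbb{H}\otimes\mathcal{N}_{0}(t)$ are automatically $\mathbb{H}$-linear, form a generic pair in $\mathcal{H}$, and satisfy $\mathcal{M}(t)+\mathcal{N}(t)=\mathcal{L}_{(H_{k}(t))}$ with $H_{k}(t)=\mathbb{H}\otimes H_{k}^{0}(t)$, so $\dim_{\mathbb{R}}H_{k}(t)=4[k^{t}]$ as you wanted. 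With this substitution the rest of your argument (Lemma \ref{CloReg}, Theorem \ref{Api}, Corollary \ref{domain2}, and the non-isomorphism estimate) goes through unchanged: the scalar extension from $\mathbb{R}$ to $\mathbb{H}$ does the work that your missing quaternion decomposition theorem was supposed to do.
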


\begin{proof}
An infinite-dimensional real Hilbert space $\mathcal{H}$ can be realized as a
tensor product of a four-dimensional real space $l_{4}^{2}$ and a real Hilbert
space $\mathcal{H}_{0}$. Identifying in a natural way $l_{4}^{2}$ with
$\mathbb{H}$, one can turn $\mathcal{H}=\mathbb{H}\otimes\mathcal{H}_{0}$ into
a quaternion Hilbert space by setting
\[
q_{1}(q_{2}\otimes x)=q_{1}q_{2}\otimes x,
\]
for $q_{1},q_{2}\in$, $x\in\mathcal{H}_{0}$. Let $\mathcal{M}_{0}%
,\mathcal{N}_{0}$ be a generic pair of subspaces in ${\mathcal{H}}_{0}.$ Then
the quaternion linear subspaces $\mathcal{M}=\mathbb{H}\otimes\mathcal{M}_{0}%
$, $\mathcal{N}=\mathbb{H}\otimes\mathcal{N}_{0}$ form a generic pair in
${\mathcal{H}}$.

We have to show that there is a continuum of pairwise non-similar algebras of
the form $\mathcal{A}_{\mathcal{M},\mathcal{N}}$. Arguing as in the proof of
Corollary \ref{cardinality} and using Corollary \ref{domain2} instead of
Corollary \ref{domain}, we see that it suffices to show that there is a
continuum of non-isomorphic subspaces $\mathcal{E}=\mathcal{M}+\mathcal{N}$ of
the form $\mathcal{M}=\mathbb{H}\otimes\mathcal{M}_{0}$, $\mathcal{N}%
=\mathbb{H}\otimes\mathcal{N}_{0}$, for some generic pairs $(\mathcal{M}%
_{0},\mathcal{N}_{0})$.

We will choose subspaces $\mathcal{E}$ in the form $\mathcal{L}_{(H_{k})}$
(see (\ref{DimSum})). Namely, for each $t>1$, we take $H_{k}(t) =
\mathbb{H}\otimes H_{k}^{0}(t)$, where $\dim H_{k}^{0}(t) = [k^{t}]$ for
$k>0$, and $\dim H_{0}^{0} = \infty$. Then $\dim_{\mathbb{R}} H_{k}(t) =
4[k^{t}]$ and it follows from (\ref{inneq}) that the subspaces $\mathcal{L}%
_{(H_{k}(t))}$ are pairwise non-isomorphic. On the other hand, since the
subspaces $\mathcal{L}_{(H_{k}^{0}(t))}$ are operator ranges containing
infinite-dimensional closed subspaces, they can be presented in the form
$\mathcal{M}_{0}(t)+\mathcal{N}_{0}(t)$, for generic pairs $\mathcal{M}%
_{0}(t),\mathcal{N}_{0}(t)$. Therefore
\[
\mathcal{L}_{(H_{k}(t))} =\mathcal{M}(t)+\mathcal{N}(t) = \mathbb{H}%
\otimes\mathcal{M}_{0}(t) + \mathbb{H}\otimes\mathcal{N}_{0}(t)
\]
and we are done.
\end{proof}

\section{ Questions and commentaries}

1. Is it true that $\mathcal{A}=\overline{\mathcal{A}^{F}}$, for every
Lomonosov algebra $\mathcal{A}$? (As always the bar over a set of operators
denotes the closure in SOT).\medskip

2. Is it true that each Lomonosov algebra contains the identity operator?

\begin{proposition}
\label{eqv}The questions \emph{1} and \emph{2} are equivalent.
\end{proposition}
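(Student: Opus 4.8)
The plan is to prove the two implications separately, attaching to a Lomonosov algebra $\mathcal{A}$ two auxiliary algebras: the (SOT)-closure $\mathcal{A}_0:=\overline{\mathcal{A}^{F}}$ of its finite-rank ideal, and the unitization $\mathcal{A}^{+}:=\mathcal{A}+\mathbb{R}\mathbf{1}$. First I would record that both are again Lomonosov algebras. For $\mathcal{A}^{+}$ this is clear: it is a transitive algebra containing finite-rank operators, and it is (SOT)-closed as the sum of the (SOT)-closed subspace $\mathcal{A}$ and the one-dimensional subspace $\mathbb{R}\mathbf{1}$. For $\mathcal{A}_0$ one gets transitivity from the fact that $\mathcal{A}^{F}x$ is a non-zero $\mathcal{A}$-invariant subspace for each $x\neq 0$ (it is an ideal with $\ker\mathcal{A}^{F}=\{0\}$), hence dense; moreover $\mathcal{A}_0\subseteq\mathcal{A}$ and $\mathcal{A}_0^{F}=\mathcal{A}^{F}$.

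Each direction rests on a multiplicative approximation lemma: if $\mathbf{1}\in\overline{\mathcal{A}^{F}}$ then $\mathcal{A}=\overline{\mathcal{A}^{F}}$. Indeed, given $T\in\mathcal{A}$, a finite set $x_{1},\dots,x_{n}$ and $\varepsilon>0$, I would apply $\mathbf{1}\in\overline{\mathcal{A}^{F}}$ to the vectors $Tx_{1},\dots,Tx_{n}$ to obtain $P\in\mathcal{A}^{F}$ with $\Vert P(Tx_{i})-Tx_{i}\Vert<\varepsilon$; since $\mathcal{A}^{F}$ is an ideal, $PT\in\mathcal{A}^{F}$ and $\Vert PTx_{i}-Tx_{i}\Vert<\varepsilon$, so $T\in\overline{\mathcal{A}^{F}}$. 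Together with the trivial inclusion $\overline{\mathcal{A}^{F}}\subseteq\mathcal{A}$ this gives equality. This already yields (2)$\Rightarrow$(1): assuming every Lomonosov algebra contains $\mathbf{1}$, apply this to the Lomonosov algebra $\mathcal{A}_0$ to get $\mathbf{1}\in\mathcal{A}_0=\overline{\mathcal{A}^{F}}$, whence $\mathcal{A}=\overline{\mathcal{A}^{F}}$.

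For (1)$\Rightarrow$(2) the plan is to apply the hypothesis to $\mathcal{A}^{+}$, obtaining $\mathcal{A}^{+}=\overline{(\mathcal{A}^{+})^{F}}$, and then to prove that $\mathcal{A}^{+}$ has no new finite-rank operators, i.e. $(\mathcal{A}^{+})^{F}=\mathcal{A}^{F}$. Granting this, $\mathcal{A}^{+}=\overline{\mathcal{A}^{F}}\subseteq\mathcal{A}$ (as $\mathcal{A}$ is (SOT)-closed and contains $\mathcal{A}^{F}$), forcing $\mathcal{A}^{+}=\mathcal{A}$ and hence $\mathbf{1}\in\mathcal{A}$. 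The identity $(\mathcal{A}^{+})^{F}=\mathcal{A}^{F}$ is the heart of the matter and is where I expect the real work: I must show that if $A\in\mathcal{A}$ and $G:=A+\lambda\mathbf{1}$ is finite rank with $\lambda\neq 0$, then already $\mathbf{1}\in\mathcal{A}$.

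The mechanism I would use is a ``capture'' argument on $\mathcal{X}^{F}$. Such a $G$ preserves $\mathcal{X}^{F}$ and is $\mathbb{D}$-linear there (by Lemma \ref{more}, since $A$ and $\mathbf{1}$ are), and a density argument shows $\mathrm{range}(G)=G\mathcal{X}^{F}\subseteq\mathcal{X}^{F}$; hence on $\mathcal{X}^{F}$ the operator $G$ is a finite-rank $\mathbb{D}$-linear map whose range has a $\mathbb{D}$-basis $u_{1},\dots,u_{r}\in\mathcal{X}^{F}$. For every $K\in\mathcal{A}^{F}$ the ideal property gives $KG=KA+\lambda K\in\mathcal{A}^{F}$. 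Using strict density of $\mathcal{A}^{F}$ on $\mathcal{X}^{F}$ (Corollary \ref{Jac}) I can choose, for each $i$, an operator $K_{i}\in\mathcal{A}^{F}$ with $K_{i}u_{i}=u_{i}$ and $K_{i}u_{j}=0$ for $j\neq i$; then, by $\mathbb{D}$-linearity, $K_{i}G$ picks out the $u_{i}$-component of $G$ on $\mathcal{X}^{F}$, so $F':=\sum_{i}K_{i}G\in\mathcal{A}^{F}$ satisfies $F'|_{\mathcal{X}^{F}}=G|_{\mathcal{X}^{F}}$. Consequently $A-F'\in\mathcal{A}$ agrees with $-\lambda\mathbf{1}$ on the dense subspace $\mathcal{X}^{F}$, hence everywhere, so $-\lambda\mathbf{1}=A-F'\in\mathcal{A}$ and $\mathbf{1}\in\mathcal{A}$. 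The main obstacle is exactly this capture step: realizing the finite-rank operator $G$ by a genuine element of $\mathcal{A}^{F}$ that agrees with it on all of $\mathcal{X}^{F}$, not merely on a finite set. What makes it succeed is that $\mathrm{range}(G)\subseteq\mathcal{X}^{F}$ and that the functionals of $G$ are realized inside $\mathcal{A}^{F}$ through the Jacobson strict density, so that summing the rank-one pieces reconstructs $G$ itself.
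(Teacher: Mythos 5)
Your proof is correct, and its key step differs genuinely from the paper's. The direction (2)$\Rightarrow$(1) coincides with the paper's: the paper notes that $\overline{\mathcal{A}^{F}}$ is a Lomonosov algebra, hence unital by hypothesis, and, being an ideal of the SOT-closed algebra $\mathcal{A}$, must equal $\mathcal{A}$; your multiplicative approximation lemma is just an explicit proof of that ideal fact. For (1)$\Rightarrow$(2) both arguments unitize and reach the same juncture: a finite-rank operator $G=A+\lambda\mathbf{1}$ with $A\in\mathcal{A}$, $\lambda\neq 0$. From here the paper argues algebraically: the algebra $\mathcal{C}\subset\mathcal{A}$ generated by $A$ is commutative and finite-dimensional, so idempotents lift modulo $\mathcal{C}\cap\mathcal{F}(\mathcal{X})$, producing a projection $P\in\mathcal{A}$ with $Q=\mathbf{1}-P$ of finite rank; then, since a transitive algebra on the finite-dimensional space $Q\mathcal{X}$ is the full algebra of $\mathbb{D}$-linear maps and hence unital, one gets $Q\in\mathcal{A}$ and $\mathbf{1}=P+Q\in\mathcal{A}$. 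You instead recycle the Section 2 machinery: $G$ preserves $\mathcal{X}^{F}$, is $\mathbb{D}$-linear there by Lemma \ref{more}, its range lies in $\mathcal{X}^{F}$ (density plus finite rank) and is a $\mathbb{D}$-submodule, so strict $\mathbb{D}$-density (Corollary \ref{Jac}) yields $K_{i}\in\mathcal{A}^{F}$ with $K_{i}u_{j}=\delta_{ij}u_{i}$; since the $K_{i}$ commute with the $\mathbb{D}$-coefficients, $F'=\sum_{i}K_{i}G\in\mathcal{A}^{F}$ reproduces $G$ on $\mathcal{X}^{F}$, whence $A-F'$ agrees with $-\lambda\mathbf{1}$ on a dense subspace, hence everywhere, giving $\mathbf{1}\in\mathcal{A}$. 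All of these steps check out (in particular $\mathrm{range}(G)\subseteq\mathcal{X}^{F}$ and its $\mathbb{D}$-invariance). Your route avoids idempotent lifting and the classification of finite-dimensional transitive algebras, and it isolates a clean standalone fact valid for any $L$-algebra, SOT-closed or not: if $A+\lambda\mathbf{1}$ is of finite rank for some $A\in\mathcal{A}$ and $\lambda\neq 0$, then $\mathbf{1}\in\mathcal{A}$. What the paper's route yields in exchange is the structural intermediate that a non-unital Lomonosov algebra would have to contain a projection of finite corank.
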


\begin{proof}
Firstly, we show that if the answer to Question 2 is positive then the answer
to Question 1 is positive. Indeed, under this assumption, for any Lomonosov
algebra $\mathcal{A}$, the closure $\overline{\mathcal{A}^{F}}$ of
$\mathcal{A}^{F}$ is unital. Since $\overline{\mathcal{A}^{F}}$ is an ideal of
$\mathcal{A}$, it coincides with $\mathcal{A}$.

Conversely, assume that the answer to Question 1 is positive. If a Lomonosov
algebra $\mathcal{A}$ is not unital, then $\mathcal{B}=\mathcal{A}%
+\mathbb{R}\mathbf{1}$ is a Lomonosov algebra, so that $\mathcal{B}%
=\overline{\mathcal{B}^{F}}$ by our assumption. Therefore $\mathcal{B}^{F}$ is
not contained in $\mathcal{A}$, so there is a finite rank operator
$K\in\mathcal{B}^{F}\setminus\mathcal{A}$. Clearly, $K=\lambda\mathbf{1}+T$
where $T\in\mathcal{A}$ and $0\neq\lambda\in\mathbb{R}.$ Thus $T=-\lambda
\mathbf{1}+K\in\mathcal{A}$ whence the algebra $\mathcal{\mathcal{C}%
\subset\mathcal{A}}$ generated by $T$ contains an operator $T_{1}$ of the form
$\mathbf{1}+R$ where $R\in\mathcal{F}(\mathcal{X})$. Since $\mathcal{C}$ is
commutative and finite-dimensional, idempotents lift from any quotient of
$\mathcal{C}$ (much more general results can be found e.g. in \cite{Nic}).
This means that if $\mathcal{J}$ is an ideal of $\mathcal{C}$ and $W^{2}-W
\in\mathcal{J}$, for some $W\in\mathcal{C}$, then there is $P\in\mathcal{C}$
such that $P^{2}=P$ and $P-W \in\mathcal{J}$. Applying this to $\mathcal{J}%
=\mathcal{C}\cap\mathcal{F}(\mathcal{X})$ and $W = T_{1}$ we get that there is
a projection $P\in\mathcal{A}$ such that $P-1-R\in\mathcal{F}(\mathcal{X})$.
Thus $\mathcal{A}$ contains a projection $P$ such that the projection $Q=1-P$
is of finite rank.

For $Z\in\mathcal{A}$, the operators $QZQ=(\mathbf{1}-P)Z(\mathbf{1}%
-P)=Z-PZ-ZP+PZP$ belong to $\mathcal{A}.$ So
\[
\mathcal{U}=\{T\in\mathcal{A}\text{: }QTQ=T\}=\mathcal{\{}QZQ\text{: }%
Z\in\mathcal{A}\}\subset\mathcal{A}%
\]
is an algebra and $\mathcal{U}|_{\mathcal{Y}}$ is an operator algebra on the
space $\mathcal{Y}=Q\mathcal{X},$ $\dim\mathcal{Y}<\infty.$ It is transitive,
since if $0\neq y\in\mathcal{Y}$ then $\mathcal{U}y=Q\mathcal{A}%
y=Q\overline{\mathcal{A}y}=Q\mathcal{X}=\mathcal{Y}$. As every such algebra
coincides with the algebra of all $\mathbb{D}$-linear operators (where
$\mathbb{D}=\mathbb{R},\mathbb{C}$ or $\mathbb{H}$), $\mathcal{U}%
|_{\mathcal{Y}}$ is unital. Let $R\in\mathcal{U}$ be such that
$R|_{\mathcal{Y}}=\mathbf{1}_{\mathcal{Y}}.$ Then $QRQ=R$ and $RQ=Q.$ So
$R=Q(RQ)=Q^{2}=Q.$ Thus $Q\in\mathcal{A}$. Therefore $\mathbf{1}%
=P+Q\in\mathcal{A}$, a contradiction. So $\mathcal{A}$ is unital.\bigskip
\end{proof}

3. Let $\mathcal{A}$ be a Lomonosov algebra. Is it true that all operators in
$\mathbb{D}$ are closed?\medskip

4. For which operators $S$ (respectively, representations $\pi$) the algebra
$\mathcal{A}_{S}$ (respectively, $\mathcal{A}_{\pi}$) is a maximal Lomonosov
algebra of complex (respectively, quaternion) type?

\bigskip

E. Kissin: STORM, London Metropolitan University, 166-220 Holloway Road,
London N7 8DB, Great Britain; e-mail: e.kissin@londonmet.ac.uk\medskip

V. S. Shulman: Department of Mathematics, Vologda State University, Vologda,
Russia; e-mail: shulman.victor80@gmail.com\medskip

Yu. V. Turovskii: Department of Mathematics, Vologda State University,
Vologda, Russia; e-mail: yuri.turovskii@gmail.com

\end{document}